\definecolor{brown}{rgb}{0.43, 0.21, 0.1}
\definecolor{green}{cmyk}{0.64,0,0.95,0.40}
\def\ml{l\kern-0.035cm\char39\kern-0.03cm}
\numberwithin{equation}{section}
\newtheorem{theorem}{Theorem}[section]
\newtheorem*{theorem*}{Theorem}
\newtheorem{lema}[theorem]{Lemma}
\newtheorem{corol}[theorem]{Corollary}
\newtheorem{prop}[theorem]{Proposition}
\newtheorem{quest}{Question}
\newtheorem{clma}{Claim}
\newtheorem*{clm*}{Claim}
\newtheorem{dfn}{Definition}
\newtheorem{problem}{Problem}
\newcommand{\oo}{\text{\rm 1-1}}
\newcommand{\KB}{\text{\rm KB}}
\newcommand{\CH}{\text{\rm\bf CH}}
\newcommand{\rmL}{\mathrm{L}}
\newcommand{\rmP}{\mathrm{P}}
\newcommand{\leqsq}{\leq_\square}
\newcommand{\leqoo}{\leq_\oo}
\newcommand{\leqkb}{\leq_\KB}
\newcommand{\leqk}{\leq_K}
\newcommand{\leqcountk}{\leq_K^\omega}
\newcommand{\leqrestk}{\leq_K^+}
\newcommand{\baire}{{}^{\omega}}
\newcommand{\cp}[1]{\mathrm{C}_p(#1)}
\newcommand{\Fsigma}{\mathrm{F}_\sigma}
\newcommand{\Liminf}{\underline{\mathrm{Lim}}}
\newcommand{\partlim}{\mathtt{partlim}}
\newcommand{\fup}{{\rm\texttt{FUP}}}
\newcommand{\fip}{{\rm\texttt{FIP}}}
\newcommand{\tall}{{\text{tall}}}
\newcommand{\zero}{{\mathbf 0}}
\newcommand{\bi}{{\mathbf i}}
\newcommand{\R}{{\mathbb R}}
\newcommand{\Q}{{\mathbb Q}}
\newcommand{\fin}{\mathtt{Fin}}
\newcommand{\infin}{[\omega]^\omega}
\newcommand{\covh}[1]{\mathtt{cov}^*(#1)}
\newcommand{\seqn}[2]{\langle #1 :\ #2\in \omega \rangle}
\newcommand{\gseqn}[2]{{\langle #1:\ #2\rangle}}
\newcommand{\set}[2]{\{#1:\ #2\}}
\newcommand{\I}{{\mathcal I}}
\newcommand{\J}{{\mathcal J}}
\newcommand{\K}{{\mathcal K}}
\newcommand{\calS}{{\mathcal S}}
\newcommand{\U}{{\mathcal U}}
\newcommand{\V}{{\mathcal V}}
\newcommand{\W}{{\mathcal W}}
\newcommand{\A}{{\mathcal A}}
\newcommand{\F}{{\mathcal F}}
\newcommand{\G}{{\mathcal G}}
\newcommand{\B}{{\mathcal B}}
\newcommand{\PP}{{\mathcal P}}
\newcommand{\Ran}{\mathtt{Ran}}
\newcommand{\ED}{{\mathcal ED}}
\newcommand{\EDfin}{{\mathcal ED}_\text{\rm fin}}
\newcommand{\conv}{\mathtt{conv}}
\newcommand{\nwd}{\mathtt{nwd}}
\newcommand{\finfin}{\fin\times\fin}
\newcommand{\Z}{{\mathcal Z}}
\newcommand{\summable}{\I_{1/n}}
\newcommand{\bb}{\mathfrak{b}}
\newcommand{\dd}{\mathfrak{d}}
\newcommand{\pp}{\mathfrak{p}}
\newcommand{\cc}{\mathfrak{c}}
\newcommand{\ppk}[1]{\pp_\mathrm{K}(#1)}
\newcommand{\ppkb}[1]{\pp_\mathrm{KB}(#1)}
\newcommand{\ppsq}[1]{\pp_\square(#1)}
\newcommand{\cov}{\mathtt{cov}}
\newcommand{\covm}{\cov(\mathcal{M})}
\newcommand{\non}[1]{\mathtt{non}(#1)}
\newcommand{\iGamma}[1]{#1\text{-}\Gamma}
\newcommand{\pytkeev}{\texttt{Pytkeev}}
\newcommand{\sone}[2]{{\rm S}_1(#1,#2)}
\newcommand{\sonemm}{\sone{\Omega}{\Omega}}
\newcommand{\subsel}[2]{{#1\brack#2}}
\newcommand{\subselR}[2]{\left[#1,#2\right]}
\newcommand{\subselgg}[2]{\subsel{#1\text{-}\Gamma}{#2\text{-}\Gamma}}
\newcommand{\subselggR}[2]{\subselR{#1\text{-}\Gamma}{#2\text{-}\Gamma}}
\newcommand{\subselggsqR}[2]{\subselR{#1\text{-}\Gamma}{#2\text{-}\Gamma}_\square}
\newcommand{\subselggf}[1]{\subsel{#1\text{-}\Gamma}{\Gamma}}
\newcommand{\subselggfR}[1]{\subselR{#1\text{-}\Gamma}{\Gamma}}
\newcommand{\subselmg}[1]{\subsel{\Omega}{#1\text{-}\Gamma}}
\newcommand{\subselmgR}[1]{\subselR{\Omega}{#1\text{-}\Gamma}}
\newcommand{\subselmgsqR}[1]{\subselR{\Omega}{#1\text{-}\Gamma}_\square}
\newcommand{\subselmgk}[1]{\subsel{\Omega}{#1\text{-}\Gamma}_K}
\newcommand{\subselmgkR}[1]{\subselR{\Omega}{#1\text{-}\Gamma}_K}
\newcommand{\subselmgkb}[1]{\subsel{\Omega}{#1\text{-}\Gamma}_\KB}
\newcommand{\subselmgkbR}[1]{\subselR{\Omega}{#1\text{-}\Gamma}_\KB}
\newcommand{\subselmgoo}[1]{\subsel{\Omega}{#1\text{-}\Gamma}_\oo}
\newcommand{\subselmgooR}[1]{\subselR{\Omega}{#1\text{-}\Gamma}_\oo}
\newcommand{\zsubselmg}[1]{\subsel{\Omega_\zero}{#1\text{-}\Gamma_\zero}}
\newcommand{\zsubselmgR}[1]{\subselR{\Omega_\zero}{#1\text{-}\Gamma_\zero}}
\newcommand{\zsubselmgsqR}[1]{\subselR{\Omega_\zero}{#1\text{-}\Gamma_\zero}_\square}
\newcommand{\fu}{\rm\texttt{FU}}
\journal{arXiv.org}
\begin{document}

\begin{frontmatter}
\title{Ideal approach to convergence in functional spaces} 
\author{Serhii Bardyla\fnref{fn1}}
\address{Universit\"at Wien, Institut f\"ur Mathematik, Kurt G\"odel Research Center, Kolingasse 14-16, 1090 Vienna, Austria.}
\ead{sbardyla@yahoo.com}
\author{Jaroslav \v Supina\fnref{fn2}}
 \address{Institute of Mathematics,  P.J. \v{S}af\'arik University in Ko\v sice,  Jesenn\'a 5, 040 01 Ko\v{s}ice, Slovakia}
\ead{jaroslav.supina@upjs.sk}
\author{Lyubomyr Zdomskyy\fnref{fn3}}
\address{Universit\"at Wien, Institut f\"ur Mathematik, Kurt G\"odel Research Center, Kolingasse 14-16, 1090 Vienna, Austria.}
\ead{lzdomsky@gmail.com}
\fntext[fn1]{Supported by the Austrian Science Fund FWF (Grant M 2967).}
\fntext[fn2]{The~author would like to thank the~Austrian Agency for International Cooperation in Education and Research (OeAD-GmbH) for the~scholarship ICM-2020-00442 in the~frame of~Aktion Österreich-Slowakei, AÖSK-Stipendien für Postdoktoranden. This work was supported by the Slovak Research and Development Agency under the Contracts no. APVV-16-0337, APVV-20-0045.}
\fntext[fn3]{Supported by the~Austrian Science Fund FWF (Grants I 2374, I 3709, and I 5930).}
\begin{abstract}

We solve the~last standing open problem from the~seminal paper by~J.~Gerlits and Zs.~Nagy~\cite{GerNag}, which was later reposed by A.~Miller, T.~Orenshtein, and B.~Tsaban. Namely, we show that under~$\pp=\cc$ there is a~$\delta$-set that is not a~$\gamma$-set. 
 Thus we constructed a subset $A$ of reals such that the space $\cp{A}$ of all real-valued continuous functions on $A$ is not Fr\' echet--Urysohn, but possesses the Pytkeev property.
Moreover, under~$\CH$ we construct a~$\pi$-set that is not a~$\delta$-set solving a~problem by M.~Sakai. In fact, we construct various examples of $\delta$-sets that are not $\gamma$-sets, satisfying finer properties parametrized by ideals on natural numbers. Finally, we distinguish ideal variants of the~Fr\' echet--Urysohn property for many different Borel ideals in the~realm of functional spaces. 
\end{abstract}
\begin{keyword}
ideal \sep Pytkeev property \sep selection principle \sep Fr\' echet--Urysohn property\sep $\delta$-set\sep $\gamma$-set
\MSC[2010] 40A35 \sep 54G15 \sep 26A03 
\end{keyword}
\end{frontmatter}

\section{Introduction}\label{S-intro}

This paper is devoted to the study of the \emph{Fr\'echet--Urysohn} property 
of  space of the form $\cp{X}$, the set of all 
continuous functions $f:X\to\mathbb R$ with the topology inherited from the Tychonoff power $\mathbb R^X$, where $X$ is a Tychonoff space. 
The Fr\'echet--Urysohn property is one of the two most popular approaches
to characterize spaces which can be described completely by the family of their convergent sequences, the other property being what is now called \emph{sequentiality}, 
see, e.g., \cite{Dud64, Fra65} and references therein. As it turned out \cite{Pyt82_n}, these two properties coincide for spaces of the form $\cp{X}$.

We study the Fr\'echet--Urysohn property of $\mathrm{C}_p$-spaces by 
investigating its relations with formally weaker local properties.
The first one has been introduced by Pytkeev in \cite{Pytk84}
and since at least \cite{MaTi} is named after him. The~second one 
is called \emph{partlim} in~\cite{OrTs13}, which is intuitively close to the~sequentiality, but expressed in terms of partial functions. 
 
 Our main conceptual approach is to use the 
 duality given by the correspondence 
between the local properties  of $\mathrm{C}_p$-spaces 
and global (covering) properties of the ``base'' space $X$, see, e.g.,
\cite{Ark_book} for many classical examples of this duality. 
For instance, $\cp{X}$ is Fr\'echet--Urysohn (resp. Pytkeev, satisfies ``partlim'') if and only if $X$ is a $\gamma$-space (resp. $\pi$-space, $\delta$-space), see \cite{GerNag} (resp. \cite{Sak03,SimTsa}, \cite{OrTs13}).
This allows us to distinguish the above-mentioned properties of $\mathrm{C}_p$-spaces
by proving that the characterizing covering properties of their base spaces
are consistently different, and for this we  shall use the technical tools from the well-developed by now theory of
\emph{selection principles in topology} taking their origin in \cite{COC2,Comb1}, see also
  \cite{buk-str,Bu19,Os18} for  more recent sources on the~topic.  It was our goal to get such counterexamples of as 
``simple'' form as possible, which in this context means for  
$X$ to be a zero-dimensional metrizable separable space, i.e., a topological copy of a subspace of the Cantor space $2^\omega$. For subspaces of $2^\omega$ it is common to refer to
$\xi$-spaces as to $\xi$-sets, where $\xi\in\{\gamma, \delta,\pi\}$.

Next, we give a short road map of the paper. 
In Section~\ref{S-nongamma_pytkeev} we show that the equality $\mathfrak p=\mathfrak c$ implies the existence of a $\pi$-set that is not a $\gamma$-set.
In Section~\ref{S-nongamma_delta} we show that the same set-theoretic assumption yields a $\delta$-set
that is not a $\gamma$-set. 
Let us mention that every
$\gamma$-space is a $\delta$-space simply by the corresponding definitions, and each $\delta$-space is a $\pi$-space, see \cite{SakQuad}. 
Thus the main result of Section~\ref{S-nongamma_pytkeev} follows from that of Section~\ref{S-nongamma_delta}.
However, since our argument in Section~\ref{S-nongamma_pytkeev} is direct in the sense 
that it does not use the fact that $\delta$-sets are $\pi$-sets, we believe it might be still of some interest.
The proofs in these initial two sections are rather streamlined, and it was our intention to begin with relatively easy negative solutions of corresponding   problems, since we suspect that some readers will  primarily 
be interested  in exactly these counterexamples. Thus, the readers who would like just to get a taste of the subject without going through technically involved arguments could simply  read only the first three sections.

A large part of the rest of the paper is motivated by a question of M.~Sakai \cite{SakQuad} asking whether 
$\pi$-sets are $\delta$-sets. Unlike it was with questions treated in Sections \ref{S-nongamma_pytkeev} and \ref{S-nongamma_delta},
our solution of this one required the introduction of ideals on $\omega$ as parameters into the study of weaker versions of $\gamma$-sets in the style of~\cite{Su20}, which makes the paper starting  from  Section~\ref{S-katetov}  considerably more technical. 
In particular,  in Sections~\ref{S-katetov}  and \ref{S-delta_hier} we essentially develop the machinery needed to construct under CH a $\pi$-set that is not a $\delta$-set, see Theorem~\ref{pi-nondelta} in Section~\ref{S-pytkeev_nondelta}.
It is worth noting here that the counterexamples mentioned above could not be obtained in ZFC because 
in the Laver model all $\pi$-sets (and hence also all $\delta$- and $\gamma$-sets) are countable \cite{SimTsa}.

These parametrized properties allow us to explore in more detail the~gap between $\delta$-sets and $\gamma$-sets. In fact, using countable powers of the ideal of finite subsets of $\omega$, we define $\omega_1$ many  topological properties between being $\gamma$- and $\delta$-sets, see Diagram~\ref{diagram:1}, which are all different under CH according to
Corollary~\ref{finalpha-nonfinbeta}.

The examples of spaces satisfying parametrized properties corresponding to ideals on $\omega$  are  concentrated on  $[\omega]^{<\omega}$ in a strong sense, and the ideals we consider are meager (in most of the cases even Borel), hence their function spaces  
satisfy analogous parametrized versions of the Fr\'echet--Urysohn property considered in \cite{BorFar,Su20}, see 
Section~\ref{S-frecheturysohn}. In a sense, this closes the circle: 
The investigation of the weakenings of the  Fr\'echet--Urysohn property of $\mathrm{C}_p$-spaces led us to the study of ideal versions of  $\gamma$-spaces, whose $\mathrm{C}_p$-spaces turned out to satisfy corresponding  ideal versions of the Fr\'echet--Urysohn property, thus allowing us to distinguish also between many of the latter ones, see Corollary~\ref{9_6_n}.

Finally, we close the paper by collecting the consequences of our results in terms of cardinal 
characteristics of the ideals, as well as formulating open questions related to the paper. 

 Regarding  more recent methods used in the field, a special role in our investigations is played 
 by the so-called  ``coherent omission of intervals'' invented by B.~Tsaban in \cite{OrTs11}, which led to the  groundbreaking construction of $\gamma$-sets from $\mathfrak p=\mathfrak b$,  in this context a rather modest set-theoretic assumption. This method might be thought of as a~state of the~art  application of the analysis of $\omega$-covers pioneered by F.~Galvin and A.~Miller in~\cite{GM}. 
\par

All the notions mentioned above that actually appear in some proofs (and not only serve as a motivation) are defined in the sections to
follow before their first occurrence.


\section{A~space of functions with the~Pytkeev property, but without the~Fr\' echet--Urysohn property}\label{S-nongamma_pytkeev}

Let us recall two well-known local topological properties. The~second one is named in~\cite{MaTi} after E.G.~Pytkeev who studied it in~\cite{Pytk84} as a~generalization of countable tightness.
\begin{dfn}
\begin{enumerate}[(1)]
    \item A~topological space~$Y$ has the~Fr\' echet--Urysohn property if for every $E\subseteq Y$ and $y\in\overline{E}$ there is a~sequence $\seqn{y_n}{n}$ in~$E$ converging to~$y$.
    \item A~topological space~$Y$ has the~Pytkeev property if for every $E\subseteq Y$ and $y\in\overline{E}\setminus E$ there is a~sequence $\seqn{E_n}{n}$ of infinite subsets of~$E$ such that each neighbourhood of~$y$ contains some~$E_n$.
\end{enumerate}
\end{dfn}
For more on the~Pytkeev property, see~\cite{Pytk84,MaTi,Sak03,SakQuad,SimTsa,TsZd09}. One can easily see that the~Pytkeev property follows from the~Fr\' echet--Urysohn property. In the~realm of arbitrary topological spaces, it is known that the~implication is strict, see~\cite{MaTi}. However, if we restrict our attention to the~space~$\cp{X}$, the~family of all continuous functions on a~topological space~$X$ equipped with the~topology of pointwise convergence, the~answer was not clear. In fact, it was raised as an~open problem first by M.~Sakai~\cite{Sak03}:
\begin{problem}[M.~Sakai]\label{prob-pytk_fu}
Does the~Pytkeev property imply the~Fr\' echet--Urysohn property for topological spaces of the~form~$\cp{X}$?
\end{problem}
In fact, Problem~\ref{prob-pytk_fu} was asked as Question~3 in~\cite{Sak03}, Problem~4.2 in~\cite{SakQuad}, and Problem~3.1 in~\cite{TsZd09}. Using a~set-theoretic assumption weaker than~$\CH$, we construct an~example answering the~above question negatively. The~definitions of the~cardinals~$\pp$ and~$\cc$ and related ones and their properties are available in~\cite{blass,buk-str}.
\begin{theorem}[$\pp=\cc$]\label{pi-nongamma}
There is a~set of reals~$A$ such that $\cp{A}$ has the~Pytkeev property, but not the~Fr\' echet--Urysohn property.
\end{theorem}
\par
The~proof of Theorem~\ref{pi-nongamma} applies some standard tools developed in the~field and related to topology, combinatorics, and open covers of a~set of reals~$A$. Let us begin with properties of~$A$ guaranteeing that $\cp{A}$ has the~Pytkeev property, and does not have the~Fr\' echet--Urysohn property.
\par
A~family~$\V$ of subsets of $X$ is called an~$\omega$-cover, if $X\not\in\V$ and for every finite $F\subseteq X$ there is $V\in\V$ such that $F\subseteq V$. An~infinite~$\V$ is called a~$\gamma$-cover, if $X\not\in\V$ and each $x\in X$ belongs to all but finitely many elements of~$\V$. Finally, a~topological space~$X$ is a~$\gamma$-set if every  open $\omega$-cover~$\V$ of~$X$ contains a~$\gamma$-subcover $\V'\subseteq\V$. J.~Gerlits and Zs.~Nagy~\cite{GerNag} have shown the~following.
\begin{theorem}[J.~Gerlits--Zs.~Nagy]\label{duality-gamma}
Let $X$ be a~Tychonoff space. $\cp{X}$ has the~Fr\' echet--Urysohn property if and only if $X$ is a~$\gamma$-set.
\end{theorem}
\par
We say that a~topological space~$X$ is a~$\pi$-set if $X$ is zero-dimensional and for every clopen $\omega$-cover~$\V$ of~$X$ there is a~family~$\set{\V_k}{k\in\omega}$ of infinite subfamilies of~$\V$ such that the~family~$\set{\bigcap\V_k}{k\in\omega}$ is an~$\omega$-cover of~$X$. For the~original formulation of a~$\pi$-set, see~\cite{Sak03}. Our formulation is by P.~Simon and B.~Tsaban~\cite{SimTsa}, and is equivalent to the~original one in a~Tychonoff topological space. M.~Sakai~\cite{Sak03} has shown the~following. 
\begin{theorem}[M.~Sakai]\label{duality-pi}
Let $X$ be a~Tychonoff space. $\cp{X}$ has the~Pytkeev property if and only if $X$ is a~$\pi$-set.
\end{theorem}
\par
All our (counter)examples are  subspaces of~$\PP(\omega)$, equipped with the~topology copied from the~Cantor space~${}^\omega2$ via the~bijection $\omega\supseteq a\mapsto\chi_a\in {}^\omega2$, where $\chi_a$ is the~characteristic function of~$a$. This way, the~family $\set{\set{a\subseteq\omega}{a\cap n=s}}{n\in\omega, s\subseteq n}$ is a~base of~$\PP(\omega)$. Thus, we can reduce our considerations just to countable covers. According to~\cite{GerNag}, a~topological space~$X$ has property~$\varepsilon$ if every  open $\omega$-cover of~$X$ contains a~countable $\omega$-subcover of~$X$. A~topological space~$Y$ has countable tightness if for every $E\subseteq Y$ and $y\in\overline{E}$ there is a~countable set~$F\subseteq E$ such that $y\in\overline{F}$. The~following characterization is a~combination of results obtained in~\cite{Arch76,GerNag,Pytk82}.
\begin{theorem}[A.V.~Arkhange\ml ski\v i, E.G.~Pytkeev, J.~Gerlits--Zs.~Nagy]\label{epsilon-char}
Let $X$ be a~Tychonoff space. The~following are equivalent.
\begin{enumerate}[(a)]
    \item  $X$ is an~$\varepsilon$-space.
    \item Every finite power of~$X$ is Lindel\"of.
    \item $\cp{X}$ has countable tightness.
\end{enumerate}
\end{theorem}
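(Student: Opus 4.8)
The plan is to prove the two substantial equivalences (a)$\Leftrightarrow$(b) and (a)$\Leftrightarrow$(c) separately, reading (a) as the $\varepsilon$-property: every open $\omega$-cover of $X$ has a countable $\omega$-subcover. Two standing reductions, both routine consequences of Tychonoff-ness, streamline the work. First, it suffices to test (a) on $\omega$-covers by cozero sets: every finite subset of an open set sits inside a cozero subset of it, so the cozero sets refining a given open $\omega$-cover $\U$ form a cozero $\omega$-cover, and a countable $\omega$-subcover of the refinement pulls back to a countable $\omega$-subcover of $\U$. Second, since $\cp{X}$ is a topological group it is homogeneous, so in (c) countable tightness need only be verified at the constant function $\mathbf 1$.

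For (b)$\Rightarrow$(a), given an open $\omega$-cover $\U$, the family $\{U^n:U\in\U\}$ covers $X^n$: for $(x_1,\dots,x_n)\in X^n$ the finite set $\{x_1,\dots,x_n\}$ lies in some $U\in\U$, so the tuple lies in $U^n$. Lindel\"ofness of $X^n$ gives a countable $\U_n\subseteq\U$ with $\{U^n:U\in\U_n\}$ covering $X^n$, and then $\bigcup_n\U_n$ is a countable $\omega$-subcover, because a finite $F\subseteq X$ with $|F|=m$ names a point of $X^m$ covered by some $U^m$ with $U\in\U_m$, whence $F\subseteq U$. The converse (a)$\Rightarrow$(b) I would prove by induction on $n$, the base case $n=1$ being that $X$ is Lindel\"of (replace an open cover by the $\omega$-cover of its finite unions, take a countable $\omega$-subcover, and unpack each member into finitely many original sets). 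For the step, assume $X^{n-1}$ is Lindel\"of and write a box cover of $X^n$ as $\{A_W\times B_W:W\}$ with $A_W\subseteq X$ and $B_W\subseteq X^{n-1}$ open; slicing over $x\in X$ and applying the hypothesis to $\{x\}\times X^{n-1}$ yields for each $x$ a countable subfamily $\G_x$ covering that slice, with $x\in A_W$ for all $W\in\G_x$.

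The crux of (a)$\Rightarrow$(b), and the step I expect to be the main obstacle, is to select countably many slices so that the corresponding $\G_x$'s together cover $X^n$. The first coordinate $\bigcap_{W\in\G_x}A_W$ that controls the $x$-slice is only a $G_\delta$, so mere Lindel\"ofness of $X$ does not suffice to thin out the $x$'s; one must feed the full $\omega$-cover strength of (a) into the reassembly, taking care that enlarging the $G_\delta$ to a genuine open set does not destroy the containment of the slice in $\bigcup\G_x$. This balancing act is precisely the delicate combinatorial core of the Gerlits--Nagy argument, and it is where I would spend most of the effort.

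For (a)$\Leftrightarrow$(c) I would set up Arkhange\ml ski\v i's dictionary between closure in $\cp{X}$ and $\omega$-covers of $X$. Given $A\subseteq\cp{X}$ with $\mathbf 1\in\overline A$, put $U_{f,m}=\{x:|f(x)-1|<1/m\}$ for $f\in A$ and $m\in\omega$; for each fixed $m$ the defining property of the closure (with $\varepsilon=1/m$) makes $\{U_{f,m}:f\in A\}$ an open $\omega$-cover, so (a) extracts a countable $\omega$-subcover, and collecting the corresponding functions over all $m$ gives a countable $B\subseteq A$ with $\mathbf 1\in\overline B$ (for finite $F$ and $\varepsilon$ choose $m>1/\varepsilon$ and use the $m$-th layer). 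This proves (a)$\Rightarrow$(c). Conversely, starting from a cozero $\omega$-cover $\U$, write $U=\{h_U>0\}$ with $h_U\colon X\to[0,1]$ continuous and set $A=\{\min(1,n\,h_U):U\in\U,\ n\in\omega\}$; one checks $\mathbf 1\in\overline A$ precisely because $\U$ is an $\omega$-cover, countable tightness returns a countable $B\subseteq A$ with $\mathbf 1\in\overline B$, and the sets $U$ indexing $B$ form a countable $\omega$-subcover, since a finite $F$ read off at threshold $1/2$ forces $h_U>0$ on $F$, i.e.\ $F\subseteq U$. Together with (a)$\Leftrightarrow$(b) this closes the three-way equivalence.
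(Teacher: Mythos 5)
The paper itself contains no proof of Theorem~\ref{epsilon-char}: it is quoted as a combination of results from \cite{Arch76,GerNag,Pytk82}, so your proposal must be measured against the classical arguments. Three of your four implications are essentially correct and standard: (b)$\Rightarrow$(a) via the covers $\set{U^n}{U\in\U}$ of $X^n$, and both halves of (a)$\Leftrightarrow$(c) via the dictionary $U_{f,m}=\set{x}{|f(x)-1|<1/m}$ and $A=\set{\min(1,nh_U)}{U\in\U,\ n\in\omega}$, together with the two reductions (cozero refinements; homogeneity of the topological group $\cp{X}$). One small repair is needed in (a)$\Rightarrow$(c): an $\omega$-cover must by definition consist of proper subsets of $X$, and nothing prevents $U_{f,m}=X$ for some $f\in A$. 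This is easily patched: if for a given $m$ some $f_m\in A$ has $U_{f_m,m}=X$, then $f_m$ alone approximates $\mathbf 1$ uniformly within $1/m$, so place it in $B$ and skip the appeal to (a) for that layer; otherwise the layer is a genuine $\omega$-cover and your argument proceeds.

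The genuine gap is (a)$\Rightarrow$(b), which you explicitly leave unfinished: you set up an induction on $n$, slice a box cover over points $x\in X$, correctly diagnose that the controlling first coordinates $\bigcap_{W\in\G_x}A_W$ are only $G_\delta$, and then stop. The diagnosis is sound --- that slicing scheme does not work --- but the conclusion to draw is that the inductive decomposition itself is wrong; the classical Gerlits--Nagy argument is direct and needs no induction on $n$. Refine a given open cover of $X^n$ to a cover $\W$ by open boxes. For each finite $F\subseteq X$, the set $F^n$ is finite, so choose for each $p\in F^n$ a box $B_p=V^p_1\times\dots\times V^p_n\in\W$ with $p\in B_p$, put $\W_F=\set{B_p}{p\in F^n}$, and define $U_F=\bigcup_{x\in F}\bigcap\set{V^p_i}{p\in F^n,\ 1\leq i\leq n,\ p_i=x}$. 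Then $U_F$ is open, $F\subseteq U_F$, and $U_F^n\subseteq\bigcup\W_F$: given $(y_1,\dots,y_n)\in U_F^n$, pick $x_i\in F$ with $y_i$ in the corresponding intersection and observe that $(y_1,\dots,y_n)\in B_p$ for $p=(x_1,\dots,x_n)$, since $p_i=x_i$ forces $y_i\in V^p_i$. If some $U_F=X$ we get a finite subcover of $X^n$ outright; otherwise $\set{U_F}{F\in[X]^{<\omega}}$ is an open $\omega$-cover of $X$, property $\varepsilon$ extracts countably many $F_k$ with $\set{U_{F_k}}{k\in\omega}$ still an $\omega$-cover, and $\bigcup_{k\in\omega}\W_{F_k}$ is then a countable subcover of $X^n$, because any $n$-tuple enumerates a finite set lying in some $U_{F_k}$, whence the tuple lies in $U_{F_k}^n\subseteq\bigcup\W_{F_k}$. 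This single device --- converting finite subcovers of $F^n$ into an $\omega$-cover $\set{U_F}{F}$ of $X$ --- is the missing idea; with it (and the trivial pull-back from boxes to the original cover) your remaining implications assemble into a complete proof.
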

\par
As usually, we identify an~infinite set $a\in\infin$ with the~unique increasing enumeration of its elements. Thus a~set $a\in\infin$ is often treated as an~increasing function $a\in\baire\omega$, and $a(n)$ stands for the~$n$-th element of~$a$. This convention is useful since we do not have to introduce a~separate notation for the~family of all increasing sequences of natural numbers. 
\par
The~following lemma is a~key tool to deduce that the~set~$A$ constructed in~Theorem~\ref{pi-nongamma} is not a~$\gamma$-set. It is due to~\cite{GM}, but became a~folklore. A~family $A\subseteq\PP(\omega)$ is \tall\ if for any $b\in\infin$ there is $a\in A$ such that $a\cap b$ is infinite. According to~\cite{BDS}, we say that $A\subseteq\PP(\omega)$ has the~finite union property, shortly \fup, if for any $a_0,\dots,a_k\in A$, the~set $\omega\setminus\bigcup_{i=0}^{k}a_i$ is infinite.
\begin{lema}[folklore]\label{nothavinggamma}
Let $A\subseteq\PP(\omega)$ possess \fup. If $A$ is \tall, then $A$ is not a~$\gamma$-set. 
\end{lema}
\begin{proof}
Let us assume that $A$ has \fup. Then $\V=\set{V_n}{n\in\cup A}$ for $V_n=\set{a\in A}{n\not\in a}$ is an~$\omega$-cover of~$A$,
since no $V_n$ contains entire $A$. 
\par
To get the~contradiction, let us assume that $A$ is a~$\gamma$-set. There is $I\in\infin$ such that $\set{V_n}{n\in I}$ is a~$\gamma$-cover ($V_n$ mutually distinct for $n\in I$). Since $A$ is \tall\ there is $a\in A$ such that $a\cap I$ is infinite. However, for any $n\in a\cap I$ we have $a\not\in V_n$, a~contradiction.
\end{proof}
\par
Depending on the~context, $\fin$ is the~family of all finite subsets of~$\omega$, ${}^1\omega$, $\omega\times\omega$, etc. All counterexamples in this article are constructed using transfinite induction and have the~form 
\[
A=\set{a_\alpha}{\alpha<\cc}\cup\fin
\]
for suitably chosen $a_\alpha$'s so that $A$ possesses the~desired properties. This approach follows the~legacy of the~construction by F.~Galvin and A.W.~Miller~\cite{GM}, which was afterwards generalized by B.~Tsaban and collaborators, see, e.g., \cite{OrTs11,ST2, Ts11,TsZd08}. 
\par
If $n<m$, then the~family $\set{a\subseteq\omega}{[n,m)\cap a=\emptyset}$ is an~open set in~$\PP(\omega)$. Thus an~infinite set~$b\in\infin$ induces an~open $\gamma$-cover~$\seqn{V_j}{j}$ of~$\fin$, defined by $V_j=\set{a\subseteq\omega}{[b(j),b(j+1))\cap a=\emptyset}$. Due to following folklore lemma observed in~\cite{GM}, such open sets play a~crucial role in many constructions.
\begin{lema}[F.~Galvin--A.W.~Miller]\label{GMlemma}
Let $E\in\infin$.
\begin{enumerate}[(1)]
    \item If $\V$ is an~open $\omega$-cover of~$\fin$, then there are $b\in[E]^\omega$ and a~family $\set{V_j}{j\in\omega}\subseteq\V$ of distinct sets such that $a\in V_j$ if $a\cap[b(j),b(j+1))=\emptyset$.
    \item Let $A\supseteq\fin$ be of cardinality less than~$\pp$, $\V$ being an~open $\omega$-cover of~$A$. Then there is $b\in[E]^\omega$ and a~$\gamma$-cover $\set{V_j}{j\in\omega}\subseteq\V$ of $A$ such that
    \[
    V_j\supseteq\set{a\subseteq\omega}{[b(j),b(j+1))\cap a=\emptyset}.
    \]
\end{enumerate}
\end{lema}
\par
By an~ideal on $M$, we mean a~family $\I\subseteq \PP(M)$ that is
closed under taking subsets (we call such families \emph{hereditary}), closed under finite unions, contains all finite subsets of~$M$, and $M\notin\I$. We consider just ideals on countable sets. Calligraphic $\I$, $\J$, and $\K$ are used exclusively to denote ideals, most often on~$\omega$ or~$\omega\times\omega$. We are now ready to proceed to the~proof of Theorem~\ref{pi-nongamma}.
\par
\begin{proof}[Proof of Theorem~\ref{pi-nongamma}]
Let us fix an~auxiliary partition $\set{I_n}{n\in\omega}\subseteq\infin$ of~$\omega$, and a~\tall\ ideal~$\I$ on~$\omega$. 
\par
\begin{clma}\label{onestep-pi-nong}
For each $b,c\in\infin$ there is $a\in[c]^\omega $ such that  
    \[
    |\set{n\in\omega}{(\exists^\infty j\in I_n)\ [b(j),b(j+1))\cap a\neq\emptyset}| \leq 1.    
    \]
\end{clma}
\begin{proof}
Let $N=\set{n\in\omega}{(\exists^\infty j\in I_n)\ [b(j),b(j+1))\cap c\neq\emptyset}$. If $N=\emptyset$ there is nothing to prove. Otherwise pick $n\in N$ and set $a=c\cap\bigcup_{j\in I_n}[b(j), b(j+1)).$
\end{proof}
\par
Using Claim~\ref{onestep-pi-nong} we recursively build a~set $\set{a_\alpha}{\alpha<\cc}$ with strong combinatorial properties. Let us fix an~auxiliary enumeration $\set{b_\alpha}{\alpha<\cc}$ of all sets from~$\infin$ (each set repeated $\cc$ times).  
\begin{clma}\label{existence-pi-nong}
There is a~\tall\ set $\set{a_\alpha}{\alpha<\cc}\subseteq\infin\cap\I$ such that 
    \begin{center}
    $|\set{n\in\omega}{(\exists^\infty j\in I_n)\ [b_\beta(j),b_\beta(j+1))\cap a_\alpha\neq\emptyset}|\leq1$ for each $\beta\leq\alpha$.    
    \end{center}
\end{clma}
\begin{proof}
We fix an~enumeration $\set{c_\alpha}{\alpha<\cc}$ of all infinite subsets of~$\omega$. Assuming that we have $\set{a_\beta}{\beta<\alpha}$, we shall pick~$a_\alpha\in\infin$ such that the~conditions (i) - (iii) are satisfied. 
\par
 \begin{enumerate}[(i)]
    \item $a_\alpha\subseteq c_\alpha$,
    \item $a_\alpha\in\I$,
    \item $|\set{n\in\omega}{(\exists^\infty j\in I_n)\ [b_\beta(j),b_\beta(j+1))\cap a_\alpha\neq\emptyset}|\leq1$ for any $\beta\leq\alpha$.   
\end{enumerate}
\par
To find such $a_\alpha$, we shall first construct  a~$\subseteq^\ast$-decreasing sequence $\gseqn{a_{\alpha,\beta}}{\beta\leq\alpha}$ such that $c_\alpha\supseteq a_{\alpha,\beta}$, $a_{\alpha,\beta}\in\I$, and 
\[
|\set{n\in\omega}{(\exists^\infty j\in I_n)\ [b_\beta(j),b_\beta(j+1))\cap a_{\alpha,\beta}\neq\emptyset}|\leq1.
\]
For $\beta=\xi+1$ it is enough to directly apply Claim~\ref{onestep-pi-nong} for $a_{\alpha,\xi}$. If $\beta$ is limit, then we select a~pseudointersection~$a\subseteq c_\alpha$ of $\gseqn{a_{\alpha,\xi}}{\xi\leq\beta}$, and afterwards we apply Claim~\ref{onestep-pi-nong} for a~pseudointersection~$a$. Once we have $\gseqn{a_{\alpha,\beta}}{\beta\leq\alpha}$, we pick arbitrary pseudointersection $a_\alpha\subseteq c_\alpha$ of $\gseqn{a_{\alpha,\beta}}{\beta\leq\alpha}$.
\end{proof}
\par
The~set $A=\set{a_\alpha}{\alpha<\cc}\cup\fin$, constructed in Claim~\ref{existence-pi-nong}, is \tall\ and possesses \fup. Therefore by Lemma~\ref{nothavinggamma}, $A$ is not a~$\gamma$-set. \par
Let $\V_0$ be an~$\omega$-cover of~$A=\set{a_\alpha}{\alpha<\cc}\cup\fin$, so an~$\omega$-cover of~$\fin$. By Lemma~\ref{GMlemma}(2), there is $\alpha_0<\cc$ and a~family $\set{V_{0,j}}{j\in\omega}\subseteq\V_0$ of distinct sets such that 
\[
V_{0,j}\supseteq\set{a\subseteq\omega}{[b_{\alpha_0}(j),b_{\alpha_0}(j+1))\cap a=\emptyset}.
\]
Let us suppose we have $\V_0,\dots,\V_m$. $\V_0$ is an~$\omega$-cover of~$\set{a_\alpha}{\alpha<\alpha_m}\cup\fin$, so there is a~$\gamma$-cover~$\V_{m+1}\subseteq\V_0$ covering~$\set{a_\alpha}{\alpha<\alpha_m}\cup\fin$. Anyway, $\V_{m+1}$ is an~$\omega$-cover of~$\fin$. Again, by Lemma~\ref{GMlemma}(2) there is an~increasing sequence $b_{\alpha_{m+1}}$ with $\alpha_{m+1}>\alpha_m$ and a~family $\set{V_{m+1,j}}{j\in\omega}\subseteq\V_{m+1}$ of distinct sets such that 
\[
V_{m+1,j}\supseteq\set{a\subseteq\omega}{[b_{\alpha_{m+1}}(j),b_{\alpha_{m+1}}(j+1))\cap a=\emptyset}.
\]
\par
We set $\alpha_\ast=\sup\set{\alpha_i}{i\in\omega}$ and $\W_{i,n}=\set{V_{i,j}}{j\in I_n}$. 
\par
We claim that for any $F\in[A]^{<\omega}$ there are $i,n\in\omega$ such that $F\subseteq V$ for all but finitely many $V\in\W_{i,n}$. Indeed, pick an~arbitrary $F\in[A]^{<\omega}$, and set $J=\set{\alpha}{a_\alpha\in F}$. Also, let $J_<=J\cap\alpha_\ast$ and $J_\geq=J\cap[\alpha_\ast,\cc)$. 
\par
If $\alpha\in J_\geq$, then $\alpha\geq\alpha_i$ for each $i\in\omega$. By the~assumption, there is $n_i\in\omega$ such that 
\[
(\forall\alpha\in J_\geq)(\forall^\infty j\in I_{n_i})\ [b_{\alpha_i}(j),b_{\alpha_i}(j+1))\cap a_\alpha=\emptyset.
\]
Hence, $\set{a_\alpha}{\alpha\in J_\geq}\subseteq V_{i,j}$ for all but finitely many $j\in I_{n_i}$. One can see that $\set{a_\alpha}{\alpha\in J_\geq}\cup(F\cap\fin)\subseteq V_{i,j}$ for all but finitely many $j\in I_{n_i}$ as well. For all $\alpha\in J_<$ we have $\alpha<\alpha_m$ for some $m\in\omega$. The~family $\set{V_{i,j}}{j\in I_{n_i}}$ is a~$\gamma$-cover of $\set{a_\alpha}{\alpha<\alpha_i}\cup\fin$ for each $i>m$. Hence, $F\subseteq V$ for all but finitely many $V\in\W_{m+1,n_{m+1}}$.
\end{proof}

\section{A~solution of the~Gerlits--Nagy problem: a~$\delta$-set that is not a~$\gamma$-set}\label{S-nongamma_delta}

In the~seminal paper by~J.~Gerlits and Zs.~Nagy~\cite{GerNag}, they studied several local topological properties of~$\cp{X}$, including the~Fr\' echet--Urysohn property, and found the~corresponding topological property of the~domain space~$X$. They discovered a~$\delta$-set, the~notion having close connection to the~latter properties. Let us recall its definition. 
\par
According to~\cite{GerNag}, for a~family $\set{V_n}{n\in\omega}\subseteq\PP(X)$, 
\[
\Liminf\set{V_n}{n\in\omega}=\bigcup\limits_{n\in\omega}\bigcap\limits_{m\geq n}V_m=\set{x\in X}{(\forall^\infty n\in\omega)\ x\in V_n}.
\]
For $\V\subseteq\PP(X)$, the~family~$\rmL(\V)$ is the~smallest family containing~$\V$ and closed under the~operator~$\Liminf$. A~topological space~$X$ has property~$\delta$ (alternatively, is a~$\delta$-space, a~$\delta$-set) if for every  open $\omega$-cover~$\V$ of~$X$ we have $X\in\rmL(\V)$.
\par
Right from the~definition one can see that any~$\gamma$-set is a~$\delta$-set. Moreover, it has been shown in~\cite{GerNag} that any $\delta$-set of reals has strong measure zero. Thus it is consistent that both properties are equivalent. Therefore, J.~Gerlits and Zs.~Nagy~\cite{GerNag} raised as Problem below their Theorem~7 the~following question.
\begin{problem}[J.~Gerlits--Zs.~Nagy]\label{prob-delta_gamma}
Is every $\delta$-set also a~$\gamma$-set?
\end{problem}
\par
The~problem was mentioned by A.~Miller~\cite{Mi05} during his plenary lecture at the~Second Workshop on Coverings, Selections, and Games in Topology, Lecce, Italy. T.~Orenshtein and B.~Tsaban~\cite{OrTs13} restate the~question as their Problem~1.5, and mention it is the~only still open problem posed in~\cite{GerNag}. We shall prove the~following.\footnote{During the preparation of this manuscript we have been informed by Jialiang He that he has also solved this problem under~$\CH$.}
\begin{theorem}[$\pp=\cc$]\label{delta-nongamma}
There is a~$\delta$-set of reals $A\subseteq\PP(\omega)$ that is not a~$\gamma$-set.
\end{theorem}
\begin{proof}
We follow the~proof of Theorem~\ref{pi-nongamma}. Therefore, let us fix an~auxiliary partition $\set{I_n}{n\in\omega}\subseteq\infin$ of~$\omega$, and construct the~set $A=\set{a_\alpha}{\alpha<\cc}\cup\fin$ as in the~proof of Theorem~\ref{pi-nongamma}. Thus, $A$ is not a~$\gamma$-set. The~proof of the~fact that $A$ has property~$\delta$ is the~same as the~proof of the~fact that $A$ has property~$\pi$ in Theorem~\ref{pi-nongamma} until one defines $\alpha_\ast=\sup\set{\alpha_i}{i\in\omega}$. Afterward, taking $a\in A$, we shall show that 
\[
a\in\Liminf\set{\Liminf\big\{\Liminf\{V_{i,j}:j\in I_n \}: n\in\omega \big\}}{i\in\omega},
\]
and so $A\in L(\mathcal V_0)$, which would complete our proof. Indeed, first, we take $a=a_\alpha$ for $\alpha\geq\alpha_\ast$. Then we have
\[
(\forall i)(\forall^\infty n)(\forall^\infty j\in I_n)\ [b_{\alpha_i}(j),b_{\alpha_i}(j+1))\cap a_{\alpha}=\emptyset.
\]
Hence, using the~definition of~$V_{i,j}$'s, the~latter equality yields $a_\alpha\in V_{i,j}$. For the~second case, it remains to take $a\in\set{a_\alpha}{\alpha<\alpha_m}\cup\fin$ for some $m$. However, the~construction of the~family of~$V_{i,j}$'s yields that the~family $\set{V_{i,j}}{j\in\omega}$ is a~$\gamma$-cover of $\set{a_\alpha}{\alpha<\alpha_m}\cup\fin$ for each $i>m$. 
\end{proof}
\par

By a~result of M.~Sakai~\cite{SakQuad}, any $\delta$-set is a~$\pi$-set. Therefore, the~set constructed in Theorem~\ref{delta-nongamma} is a~$\pi$-set. Hence, Theorem~\ref{pi-nongamma} is a~consequence of Theorem~\ref{delta-nongamma}.


\section{Kat\v etov power ideals}\label{S-katetov}

\par
We include a~technical section to introduce Kat\v etov power ideals. However, those familiar with terminology on ideals and with Kat\v etov power ideals may skip reading the~section. We recalled the~definition of ideal in Section~\ref{S-nongamma_pytkeev}, although our terminology on ideals is in accordance with standard sources~\cite{BrFl17,BrFaVe,Hr,Hr17}.
\par
Recall from \cite{kat68,kat72} that for a~family $\set{\I_a}{a\in N}$, each $\I_a$ being an~ideal on $N_a$, and an~ideal $\I$ on~$N$, the~family 
\[
\I\text{-}\sum\set{\I_a}{a\in N}=\left\{\bigcup_{a\in N}(\{a\}\times I_a)\colon(\exists I\in\I)(\forall a\in N\setminus I)\ I_a\in\I_a\right\}
\]
is an~ideal on~$\sum_{a\in N}N_a=\set{(a,x)}{x\in N_a, a\in N}$. In particular, $\K_1\times\K_2=\K_1\text{-}\sum\set{\K_n}{n\in N}$, where $N$ is the~domain of the~ideal~$\K_1$ and $\K_n=\K_2$ for any $n\in N$. Moreover, 
\[
\I_0\oplus\dots\oplus\I_{m-1}=\emptyset_m\text{ -}\sum\set{\I_i}{i\in m},
\]
where $\emptyset_m=\{\emptyset\}$. Note that $m$ does not carry any ideal according to our definition, but $\emptyset_m$ is hereditary, closed under finite unions, and $m\notin\emptyset_m$. Thus, $\I_0\oplus\dots\oplus\I_{m-1}$ is an~ideal on~$\sum_{i\in m}N_i$. For a~sequence of ideals $\seqn{\J_n}{n}$ we denote $\J_\infty=\fin\text{-}\sum\set{\J_n}{n\in\omega}$.
\par
We recall the~definition of the~ideal $\fin^\alpha$ introduced in~\cite{kat72,Gri}, recently studied in~\cite{BFMS,DeRa09,KwRe13}, and broadly applied in~\cite{BaScTo,Do16,JsKelo02,RaSt}. Throughout the~paper, the~ideal $\K_\alpha$ is the~ideal on limit~$\alpha$ consisting of all subsets of ordinals smaller than~$\alpha$. $\fin^1$ is the~ideal $\fin$. For $1<\alpha<\omega_1$, we set
\[
\fin^\alpha=
\begin{cases}
\fin\times\fin^\beta&\alpha=\beta+1,\\
\K_\alpha\text{-}\sum\set{\fin^\beta}{\beta<\alpha}&\alpha\text{ limit}.
\end{cases}
\]
The~domain~$D_\alpha$ is defined inductively by $D_\alpha=\omega\times D_\beta$ for $\alpha=\beta+1$, and $D_\alpha=\sum_{\beta<\alpha} D_\beta$ for $\alpha$ limit.
\par
In the~proof of the~main result of  the~next section, in~Lemma~\ref{Lalpha-finalpha}, we use a~variant of original Kat\v etov power ideal~$\fin^\alpha$, which we denote by $\fin^\alpha_M$. However, we show that both ideals are K-equivalent. Let us now introduce $\fin^\alpha_M$ together with the~auxiliary terminology.
\par
We say that a~set $M\subseteq\baire\omega_1$ {\it contains cofinal sequences} if the~following conditions are satisfied.
\begin{enumerate}[(1)]
    \item Each $\xi\in M$ is strictly increasing or constant.
    \item For each limit $\alpha<\omega_1$ there is a~unique $\xi\in M$ such that $\set{\xi(n)}{n\in\omega}$ is cofinal in~$\alpha$.
    \item $M$ contains all constant sequences.
\end{enumerate}
\par
We fix a~set~$M$ containing cofinal sequences, and we denote $\fin^1_M$ to be~an ideal of all finite subsets of~${}^1\omega$. For $1<\alpha<\omega_1$ and $\seqn{\alpha_n}{n}\in M$ cofinal in~$\alpha$, we set
\[
\fin^\alpha_M=\fin\text{-}\sum\set{\fin^{\alpha_n}_M}{n\in\omega}.
\]
In~particular, for $\alpha=\beta+1$ we have $\fin^\alpha_M=\fin\times\fin^\beta_M$. $\fin^\alpha_M$ is an~ideal on a~set~$D^\alpha_M$ that may be defined by induction as follows: $D^\alpha_M=\sum_{n\in\omega}D^{\alpha_n}_M$ for $\alpha<\omega_1$. In~particular, $D^\alpha_M=\omega\times D^\beta_M$ for $\alpha=\beta+1$. One can see that the~ideal $\fin^n$ coincides with ideal $\fin^n_M$ for $n\in\omega$. For simplicity we often treat $D^\alpha_M$ as a~set of sequences, i.e., $D^\alpha_M=\set{n^\smallfrown t}{t\in D^{\alpha_n}_M,n\in\omega}$, where by $n^\smallfrown t$, we mean the~concatenation~$\langle n\rangle^\smallfrown t$ of the~one-element sequence~$\langle n\rangle$ with the~sequence~$t$. Note that $D^n_M$ for $n\in\omega$ is the~set of all sequences of length~$n$ or the~set of all $n$-tuples. 
\par
We list the~basic properties of ideal~$\fin^\alpha_M$ in Proposition~\ref{finbetaleqfinalpha}. However, we need to recall basic terminology for comparing ideals. For $\square=\text{\rm 1-1}, \mathrm{KB}, \mathrm{K}$ we say that $\varphi\in\baire\omega$ is $\square$-function if $\varphi$ is one-to-one, finite-to-one, arbitrary, respectively. For $\A_1,\A_2\subseteq\PP(\omega)$ and $\varphi\in\baire\omega$, we write $\A_1\leq_\varphi\A_2$ if $\varphi^{-1}(K)\in\A_2$ for any $K\in\A_1$, and $\A_1\leq_\square\A_2$ if there is a~$\square$-function $\varphi$ such that $\A_1\leq_\varphi\A_2$ (Similarly for $\varphi\in{}^{M_2}M_1$ and $\A_1\subseteq\PP(M_1)$, $\A_2\subseteq\PP(M_2)$). One may easily see that 
\[
\A_1\subseteq\A_2\Rightarrow\A_1\leqoo\A_2\Rightarrow\A_1\leqkb\A_2\Rightarrow\A_1\leqk\A_2. 
\]
If $\A_1\supseteq\fin$ and $\A_1\leq_\varphi\A_2$, then $\varphi$ is $\A_2$-to-one. For general products, $\K_1\leqk\K_1\times\K_2$ and $\K_2\leqkb\K_1\times\K_2$, and for sums, $\I_0\oplus\dots\oplus\I_n\leqoo\I_i$ for each $0\leq i\leq n$. For $\square=\text{\rm 1-1}, \mathrm{KB}, \mathrm{K}$, we say that $\A_1,\A_2$ are $\square$-equivalent if $\A_1\leq_\square\A_2$ and $\A_2\leq_\square\A_1$.
\par
Note that $\finfin\leqk\J$ if and only if $\finfin\leqoo\J$, see~\cite{FiSz}. For other equivalent conditions to $\finfin\leqk\J$ and further references, see~\cite{FiSz,Su16,KwSt17}. Similarly, $\fin^\alpha\leqk\J$ if and only if $\fin^\alpha\leqoo\J$ by~\cite{BFMS}. It is known that $\fin^\alpha\leqoo\fin^\beta$ and $\fin^\beta\not\leqk\fin^\alpha$ for $\alpha<\beta$, see~\cite{DeRa09} or our Proposition~\ref{finbetaleqfinalpha}. At first reading, one can skip proofs of Proposition~\ref{finbetaleqfinalpha} because they are intuitively clear, but technical.
\begin{prop}\label{finbetaleqfinalpha}
Let $0<\alpha<\omega_1$ and $M, N\subseteq\baire\omega_1$ contain cofinal sequences. 
\begin{enumerate}[(1)]
    \item If $0<\beta<\alpha$, then $\fin^\beta_M\leqoo\fin^\alpha_M$.
    \item $\fin^\alpha_M$ and $\fin^\alpha_N$ are {\rm ($\oo$)}-equivalent.
    \item $\fin^\alpha_M$ and $\fin^\alpha_M\oplus\dots\oplus\fin^\alpha_M$ are {\rm ($\oo$)}-equivalent.
    \item $\fin^\alpha_M$ is {\rm ($\oo$)}-equivalent to $\fin^\alpha_M\restriction Y$ for any $Y\not\in\fin^\alpha_M$.
    \item $\fin^\alpha_M$ is {\rm K}-equivalent to $\fin^\alpha$. Moreover, $\fin^\alpha\leqoo\fin^\alpha_M$.
    \item $\fin^\alpha_M$ is not {\rm K}-equivalent to $\fin^\beta_M$ for $\alpha\neq\beta$.
\end{enumerate}
\end{prop}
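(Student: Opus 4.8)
The plan is to prove the six items together by transfinite induction on $\alpha$, since they feed into one another; the finite levels $\fin^n=\fin^n_M$ give the base. Throughout I treat $\fin^\alpha_M$ uniformly as a $\fin$-sum $\fin\text{-}\sum\set{\fin^{\gamma_n}_M}{n\in\omega}$, where $\gamma_n=\alpha_n$ if $\alpha$ is limit and $\gamma_n=\beta$ for all $n$ if $\alpha=\beta+1$; in both cases $\gamma_n<\alpha$ and $D^\alpha_M=\bigsqcup_n(\{n\}\times D^{\gamma_n}_M)$. Two auxiliary facts, checked directly, will be used repeatedly. First, a partition lemma: every $Y\notin\fin^\alpha_M$ splits into infinitely many $\fin^\alpha_M$-positive pieces (split the infinite set $\set{n}{Y_n\notin\fin^{\gamma_n}_M}$ of positive slices into infinitely many infinite blocks, or refine inside a slice by induction). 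Second, functoriality of the $\fin$-sum: if $\varphi_n$ witnesses $\I_n\leqoo\J_n$ (resp. $\leqk$) for every $n$, then $(n,y)\mapsto(n,\varphi_n(y))$ witnesses $\fin\text{-}\sum\I_n\leqoo\fin\text{-}\sum\J_n$ (resp. $\leqk$), since the finite exceptional sets match up.

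The heart of the argument is the nontrivial inequality in (4), namely $\fin^\alpha_M\restriction Y\leqoo\fin^\alpha_M$ for $Y\notin\fin^\alpha_M$ (the reverse being witnessed by the inclusion $Y\hookrightarrow D^\alpha_M$). I build an injection $\psi\colon D^\alpha_M\to Y$ slice by slice. Writing the positive slices as $Y_{m_0},Y_{m_1},\dots$ with $m_0<m_1<\cdots$, I send the $k$-th slice $D^{\gamma_k}_M$ into the target slice $Y_{m_j}$ with $j=\max\set{j}{m_j\le k}$. The point of this schedule is that it is finite-to-one and forces the level inequality $\gamma_{m_j}\le\gamma_k$, so one is mapping the larger-level domain $D^{\gamma_k}_M$ into the smaller-level positive set $Y_{m_j}\subseteq D^{\gamma_{m_j}}_M$; this is admissible because, at levels below $\alpha$, the embedding of (1) for $\fin^{\gamma_{m_j}}_M\leqoo\fin^{\gamma_k}_M$ composes with the restriction reduction of (4) (after splitting $Y_{m_j}$ into enough positive pieces), while the finitely many $k<m_0$ are placed arbitrarily and fall into the exceptional set. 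As the schedule is finite-to-one, the finite exceptional set of any $K\in\fin^\alpha_M\restriction Y$ spoils only finitely many slices of $\psi^{-1}(K)$ and the rest are pulled back into $\fin^{\gamma_k}_M$, so $\psi^{-1}(K)\in\fin^\alpha_M$. This finite-to-one scheduling that simultaneously respects level domination is the main obstacle; everything else is composition of maps already supplied by the induction hypothesis. Item (1) follows: for $\alpha=\gamma+1,\ \beta=\gamma$ one folds $\omega$ copies of $D^\gamma_M$ into $D^\gamma_M$ through a partition into positive pieces and (4) at level $\gamma$, the general case follows by transitivity, and the limit case sends slice $n$ into its own positive piece of $D^\beta_M$ once $\alpha_n\ge\beta$, the finitely many smaller-level slices being exceptional. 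Item (3) is then immediate: the direct sum reduces to one factor by the recalled remark $\I_0\oplus\dots\oplus\I_n\leqoo\I_i$, and conversely one maps the $i$-th copy into the $i$-th block of a partition $D^\alpha_M=\bigsqcup_{i<m}Y_i$ via (4).

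For (2) I compare the two $\fin$-sums $\fin\text{-}\sum\set{\fin^{\alpha^M_n}_M}{n}$ and $\fin\text{-}\sum\set{\fin^{\alpha^N_n}_N}{n}$ attached to two cofinal $\omega$-sequences in $\alpha$. The hypotheses $\fin^\mu_M\leqoo\fin^\mu_N$ for $\mu<\alpha$, together with (1), make the two sequences of ideals $\leqoo$-cofinally interleaved, and the same finite-to-one scheduling as in (4)—assigning source slice $n$ to target slice $\max\set{m}{\alpha^M_m\le\alpha^N_n}$ and splitting by the partition lemma—yields an injection witnessing $\fin^\alpha_M\leqoo\fin^\alpha_N$, the symmetric construction giving the reverse. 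For (5) I run a separate induction comparing $\fin^\alpha_M$ with the canonical $\fin^\alpha$. The successor step is functoriality of $\fin\times(-)$ applied to the hypothesis. At a limit I send the $n$-th slice of $D^\alpha_M$ (level $\alpha_n$) to the ordinal-slice $\alpha_n$ of $D_\alpha$ via $\fin^{\alpha_n}\leqoo\fin^{\alpha_n}_M$; the $\alpha_n$ being distinct makes this injective, and a $\K_\alpha$-bounded exceptional set pulls back to a finite one, giving $\fin^\alpha\leqoo\fin^\alpha_M$. For the reverse I send the $\beta$-slice of $D_\alpha$ into slice $\max\set{n}{\alpha_n\le\beta}$ of $D^\alpha_M$; only finitely many target slices have preimage unbounded in $\alpha$, so a finite $\fin$-exceptional set pulls back to a $\K_\alpha$-bounded one, yielding $\fin^\alpha_M\leqk\fin^\alpha$.

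Here only a Katětov (not one-to-one) reduction is obtained, since the possibly infinite $\K_\alpha$-bounded exceptional sets of $\fin^\alpha$ must be compressed into the finitely many exceptional slices available in $\fin^\alpha_M$; this is exactly why (5) asserts $\oo$-reducibility in one direction only. Finally (6) is immediate: by (5) one has $\fin^\alpha_M\equiv_K\fin^\alpha$ and $\fin^\beta_M\equiv_K\fin^\beta$, so the non-reducibility recalled above, $\fin^\beta\not\leqk\fin^\alpha$ for $\alpha<\beta$, transfers to $\fin^\alpha_M\not\leqk\fin^\beta_M$ for $\beta<\alpha$, whence $\fin^\alpha_M$ and $\fin^\beta_M$ are not K-equivalent when $\alpha\neq\beta$.
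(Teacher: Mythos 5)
Your items (1)--(5) are correct and follow the same overall strategy as the paper: transfinite induction on $\alpha$, with all reductions built slice-by-slice from reductions supplied at lower levels. The organization differs in instructive ways. The paper factors the combinatorics into a separate lemma about reductions between $\fin$-sums (and $\K_\alpha$-sums) and proves (2)--(3) \emph{before} (4), so that (4) can identify $\fin\text{-}\sum\set{\fin^{\alpha_n}_M\restriction Y_n}{n\in I}$ with $\fin^\alpha_M$ by re-indexing along the infinite set $I$ of positive slices and folding the junk slices into the first positive one; your proof instead establishes (4) first by an explicit finite-to-one scheduling of source slices into positive pieces of target slices, and then derives (1), (3) and (2) from it via the partition lemma. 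Both routes work; yours makes (3) essentially a one-line consequence of (4), whereas the paper's (3) requires constructing the auxiliary sets $M_i$ with thinned-out cofinal sequences. Your treatment of (5), including the observation that only a K-reduction (not a one-to-one one) is available in the direction $\fin^\alpha_M\leqk\fin^\alpha$, matches the paper's use of its sum-lemma parts (3) and (4). Two small points you should make explicit: the ``arbitrarily placed'' slices $k<m_0$ in (4) must still be placed injectively into pieces of $Y$ disjoint from the other images, and the partition lemma for $\alpha>1$ should note that $Y\not\in\fin^\alpha_M$ forces \emph{infinitely} many positive slices, which is what makes the index-splitting available.

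The one genuine gap is in (6). Your reduction of (6) to the canonical ideals via (5) is exactly what the paper does, but you then dismiss the remaining content --- $\fin^\alpha\not\leqk\fin^\beta$ for $\beta<\alpha$ --- as ``the non-reducibility recalled above.'' The only place this is recalled in the paper is a preliminary remark that cites, as its two sources, the external reference and \emph{this very proposition}; so as written your argument either leans entirely on an outside citation or is circular. The paper supplies an actual proof: by Kat\v etov's theorem, taking pointwise $\fin^\xi$-limits of continuous functions produces exactly the $\xi$-th Baire class, and a Kat\v etov reduction $\fin^\alpha\leqk\fin^\beta$ would force the $\alpha$-th Baire class on the reals to be contained in the $\beta$-th, contradicting the strictness of the Baire hierarchy. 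This descriptive-set-theoretic input is the only non-combinatorial ingredient of the whole proposition, and your proposal does not replace it with anything; you should either reproduce that argument or commit explicitly to the external reference for the fact $\fin^\alpha\not\leqk\fin^\beta$.
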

\par
We need an~auxiliary assertion.
\begin{lema}\label{finalphainductivestep}
Let $\omega\leq\alpha<\omega_1$, and $\set{\I_\beta}{\beta<\alpha}$, $\set{\J_\beta}{\beta<\alpha}$ be families of ideals on countable sets.
\begin{enumerate}[(1)]
    \item If $\I_k\leqsq\J_k$  for all but finitely many $k\in\omega$, then $\I_\infty\leqsq\J_\infty$.
    \item Let $\seqn{F_k}{k}$ be a~decomposition of~$\omega$ into finite sets (including possibly empty set). If $\I_k\leqoo\I_k\oplus\dots\oplus\I_k$ ($i$ times) for all $i\in\omega$, and $\I_k\leqoo\J_m$  for $m\in F_k$ and all but finitely many $k\in\omega$, then $\I_\infty\leqoo\J_\infty$.
\end{enumerate}    
If $\seqn{\alpha_n}{n}\in M$ is cofinal in~$\alpha$, then we have the~following.
\begin{enumerate}
    \item[\rm (3)]  If $\I_{\alpha_k}\leqsq\J_k$  for all but finitely many $k\in\omega$, then $\K_\alpha\text{-}\sum\set{\I_\beta}{\beta<\alpha}\leqsq\J_\infty$.
    \item[\rm (4)] If $\I_k\leqk\J_\beta$ for $\alpha_k\leq\beta<\alpha_{k+1}$ and all but finitely many~$k\in\omega$, then $\I_\infty\leqk\K_\alpha\text{-}\sum\set{\J_\beta}{\beta<\alpha}$.
\end{enumerate}
\end{lema}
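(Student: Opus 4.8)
The plan is to prove each item by exhibiting a single $\square$-function $\varphi$ from the domain of the right-hand ideal to that of the left-hand ideal, assembled layer by layer from the witnesses supplied by the hypotheses; recall that $\A_1\leq_\varphi\A_2$ means $\varphi\colon\dom(\A_2)\to\dom(\A_1)$ with $\varphi^{-1}(K)\in\A_2$ for all $K\in\A_1$. The whole argument rests on two notions of smallness: in a $\fin$-sum a set is small iff all but finitely many of its layers are small, whereas in a $\K_\alpha$-sum it is small iff all but boundedly many layers are. Since every ideal in sight contains $\fin$ and omits its infinite domain, each summand domain is countably infinite, which leaves room for the auxiliary injections below. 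Item (1) is the model case: fixing witnesses $\varphi_k\colon\dom(\J_k)\to\dom(\I_k)$ of $\I_k\leqsq\J_k$ for the cofinitely many good $k$, put $\varphi(k,y)=(k,\varphi_k(y))$ and fill each of the finitely many remaining layers by a bijection onto $\dom(\I_k)$ inside that layer. Distinct layers have disjoint ranges, so $\varphi$ inherits the $\square$-property, and for $K=\bigcup_k\{k\}\times K_k\in\I_\infty$ the $k$-th layer of $\varphi^{-1}(K)$ equals $\{k\}\times\varphi_k^{-1}(K_k)\in\J_k$ for all but finitely many $k$, giving $\varphi^{-1}(K)\in\J_\infty$.

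Item (2) is the crux, since here several $\J$-layers must be pushed into one $\I$-layer while keeping $\varphi$ injective, and this is exactly what the self-embedding hypothesis buys. For all but finitely many $k$ write $F_k=\{m_0,\dots,m_{i-1}\}$ with $i=|F_k|$, take injective witnesses $\psi_{k,m_j}\colon\dom(\J_{m_j})\to\dom(\I_k)$ of $\I_k\leqoo\J_{m_j}$, and an injective witness $\theta\colon i\times\dom(\I_k)\to\dom(\I_k)$ of $\I_k\leqoo\I_k\oplus\dots\oplus\I_k$ ($i$ copies). Define $\varphi(m_j,y)=(k,\theta(j,\psi_{k,m_j}(y)))$. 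Injectivity holds because the maps $\theta(j,\cdot)$ have pairwise disjoint ranges and each $\psi_{k,m_j}$ is injective, while distinct $k$ land in distinct $\I$-layers. For $K=\bigcup_k\{k\}\times K_k\in\I_\infty$ and a good $k$ with $K_k\in\I_k$, the defining property of $\theta$ yields $\{x:\theta(j,x)\in K_k\}\in\I_k$ for each $j$, whence the $m_j$-layer of $\varphi^{-1}(K)$ equals $\{m_j\}\times\psi_{k,m_j}^{-1}(\{x:\theta(j,x)\in K_k\})\in\J_{m_j}$. As $\{F_k\}_k$ partitions $\omega$ into finite sets and only finitely many $k$ are bad, only finitely many layers $m$ are exceptional, so $\varphi^{-1}(K)\in\J_\infty$; the finitely many bad layers are filled by fresh injections.

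Items (3) and (4) use the cofinal sequence $\seqn{\alpha_k}{k}$, which is strictly increasing (hence injective, being cofinal in the limit $\alpha$), to convert between the ``finite'' and ``bounded'' notions of smallness. For (3) set $\varphi(k,y)=(\alpha_k,\varphi_k(y))$ using the witnesses of $\I_{\alpha_k}\leqsq\J_k$; distinct $k$ use distinct target layers $\alpha_k$ (the remaining target layers are simply not in the range, harmlessly), so $\varphi$ is a $\square$-function, and for $K\in\K_\alpha\text{-}\sum\set{\I_\beta}{\beta<\alpha}$ with bad layers bounded by $B<\alpha$, cofinality gives $\alpha_k\ge B$ for all but finitely many $k$, so $K_{\alpha_k}\in\I_{\alpha_k}$ and the $k$-th layer of $\varphi^{-1}(K)$ lies in $\J_k$ eventually, i.e.\ $\varphi^{-1}(K)\in\J_\infty$. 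For (4), which is only a $\leqk$-statement and thus permits many-to-one maps, send the whole block $\beta\in[\alpha_k,\alpha_{k+1})$ into the single $\I_k$-layer by $\varphi(\beta,y)=(k,\varphi_{k,\beta}(y))$ with $\varphi_{k,\beta}$ witnessing $\I_k\leqk\J_\beta$ (and define $\varphi$ arbitrarily on layers $\beta<\alpha_0$). Given $K\in\I_\infty$, all but finitely many $k$ are good, and for such $k$ every $\beta$ in its block has the $\beta$-layer of $\varphi^{-1}(K)$ in $\J_\beta$; hence the bad layers are confined to $[0,\alpha_{k_0})$ together with the blocks of the finitely many bad $k$, a bounded subset of $\alpha$, so $\varphi^{-1}(K)\in\K_\alpha\text{-}\sum\set{\J_\beta}{\beta<\alpha}$.

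The only genuine difficulty is the injectivity in (2): it is exactly the passage from the single-layer reductions $\I_k\leqoo\J_m$ to a globally one-to-one $\varphi$ that forces the hypothesis $\I_k\leqoo\I_k\oplus\dots\oplus\I_k$, and I expect the bookkeeping of which finitely many layers are exceptional, and the verification that discarding them is harmless, to be the part most prone to slips. Everything else is a routine check that the layerwise constructions respect the $\square$-property and the two flavours of smallness.
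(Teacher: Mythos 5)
Your proof is correct and takes essentially the same route as the paper: each item is proved by assembling a single layerwise witness map, with the self-embedding hypothesis in (2) securing injectivity and the cofinality of $\seqn{\alpha_n}{n}$ in (3)--(4) translating between the ``finitely many bad layers'' and ``boundedly many bad layers'' notions of smallness. The only cosmetic difference is that the paper derives (2) and (4) by reducing to part (1) via auxiliary regrouped sums (e.g.\ $\J_k'=\emptyset_{F_k}\text{-}\sum\set{\J_m}{m\in F_k}$) and then observing that the regrouped sum is $(\oo)$-equivalent to the original, whereas you write out the composed witness maps explicitly.
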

\begin{proof}
For a~subset $A\subseteq\alpha\times\omega$, and $\beta<\alpha$, let $A_\beta$ be the~projection of $\beta$-th vertical section of~$A$, i.e., $A_\beta=\set{n}{(\beta,n)\in A}$.
\par
(1) There are $k_0$ and $\square$-functions $\varphi_k\colon\omega\to\omega$ for $k\geq k_0$ such that $\I_k\leq_{\varphi_k}\J_k$. For $k<k_0$ we set $\varphi_k$ to be the~identity. We define $\varphi\colon\omega\times\omega\to\omega\times\omega$ by setting $\varphi(k,n)=(k,\varphi_k(n))$. We claim that $\I_\infty\leq_\varphi\J_\infty$. Indeed, let $A\in\I_\infty$, and $k\geq k_0$. Then $A_k\in\I_k$, and therefore $\varphi_k^{-1}[A_k]\in\J_k$. However, using the~definition of~$\varphi$ one can easily check that $(\varphi^{-1}[A])_k=\varphi_k^{-1}[A_k]$.
\par
(2) Let $\I_k'=\I_k\oplus\dots\oplus\I_k$ ($|F_k|$ times), and $\J_k'=\emptyset_{F_k}\text{ -}\sum\set{\J_m}{m\in F_k}$. By respective assumptions, we obtain $\I_k\leqoo\I_k'\leqoo\J_k'$ that by~(1) leads to $\I_\infty\leqoo\J_\infty'$. However, one can easily see that $\J_\infty'$ and $\J_\infty$ are ($\oo$)-equivalent.
\par
(3) By the~assumption and part~(1), we immediately obtain $\fin\text{-}\sum\set{\I_{\alpha_k}}{k\in\omega}\leqsq\J_\infty$. We shall show that 
\[
\K_\alpha\text{-}\sum\set{\I_\beta}{\beta<\alpha}\leq_\varphi\fin\text{-}\sum\set{\I_{\alpha_k}}{k\in\omega}
\]
for an~injective $\varphi\colon\omega\times\omega\to\alpha\times\omega$ defined by $\varphi(k,n)=(\alpha_k,n)$. Indeed, let $A\in\K_\alpha\text{-}\sum\set{\I_\beta}{\beta<\alpha}$. Then there is $\beta_0<\alpha$ such that $A_\beta\in\I_\beta$ for all $\beta>\beta_0$. Taking $k_0\in\omega$ with $\alpha_{k_0}>\beta_0$ we obtain $A_{\alpha_k}\in\I_{\alpha_k}$ for all $k>k_0$.
\par
(4) Similarly to previous cases, constructing an~appropriate map using the~assumption we obtain 
\[
\I_k\leqk\J_k'=\emptyset_{[\alpha_k,\alpha_{k+1})}\text{-}\sum\set{\J_\beta}{\alpha_k\leq\beta<\alpha_{k+1}}
\]
for all but finitely many $k$'s, where $\emptyset_{[\alpha_k,\alpha_{k+1})}$ is the~family containing just empty set. By part~(1), we then have $\I_\infty\leqk\J_\infty'$. However, one can check that ideals $\J_\infty'$ and $\K_\alpha\text{-}\sum\set{\J_\beta}{\beta<\alpha}$ are ($\oo$)-equivalent.
\end{proof}
\par
We are ready to proceed to the~proof of Proposition~\ref{finbetaleqfinalpha}.
\begin{proof}[Proof of Proposition~\ref{finbetaleqfinalpha}]
(1) Let $\seqn{\xi_n}{n}\in M$ be cofinal in~$\xi$. We shall show by transfinite induction that $\fin^{\xi_n}_M\leqoo\fin^\xi_M$ for any $1<\xi<\omega_1$. Indeed, let $\seqn{\xi_{n,k}}{k}\in M$ be cofinal in~$\xi_n$. By the~inductive assumption, we have $\fin^{\xi_{n,k}}_M\leqoo\fin^{\xi_k}_M$ for each $k\geq n$. By Lemma~\ref{finalphainductivestep}(1), we obtain the~conclusion.
\par
(2) - (3) We shall show by transfinite induction, that $\fin^\alpha_M\leqoo\fin^\alpha_N$ and $\fin^\alpha_M\leqoo\fin^\alpha_M\oplus\dots\oplus\fin^\alpha_M$ ($i$ times) for any $1<\alpha<\omega_1$ and $i\in\omega$. Thus we assume that it holds for all $\beta<\alpha$. Moreover, one can easily see that $\fin^n_M=\fin^n_N$ for any $n\in\omega$.
\par
If $\alpha=\beta+1$, then by inductive assumption, we have $\fin^\beta_M\leqoo\fin^\beta_N$. By Lemma~\ref{finalphainductivestep}(1), we obtain $\fin^\alpha_M\leqoo\fin^\alpha_N$. 
\par
Let $\alpha$ be limit, $\seqn{\xi_n}{n}\in M$ and $\seqn{\eta_n}{n}\in N$ being cofinal in~$\alpha$. We set $N_n=[\xi_n,\xi_{n+1})\cap\set{\eta_m}{m\in\omega}\in\fin$. If $N_n$ is non-empty, then by inductive assumption, we have $\fin^{\xi_n}_M\leqoo\fin^{\xi_n}_M\oplus\dots\oplus\fin^{\xi_n}_M$ ($|N_n|$ times). If $\eta_k\in N_n$, then by inductive assumption and part~(1), we have $\fin^{\xi_n}_M\leqoo\fin^{\xi_n}_N\leqoo\fin^{\eta_k}_N$. By Lemma~\ref{finalphainductivestep}(2), we obtain $\fin^\alpha_M\leqoo\fin^\alpha_N$. 
\par
Finally, to obtain $\fin^\alpha_M\leqoo\fin^\alpha_M\oplus\dots\oplus\fin^\alpha_M$ ($m$ times), let $\seqn{\xi_n}{n}\in M$ be cofinal in~$\alpha$, and $\set{b_i}{i\in m}\subseteq\infin$ a~decomposition of~$\omega$ into infinite sets. Let $M_i\subseteq\baire\omega_1$ be same as~$M$, except for the~cofinal sequence in~$\alpha$ is now $\seqn{\xi_{b_i(n)}}{n}$, where $b_i(n)$ is the~$n$-th element of~$b_i$ in its increasing enumeration. It follows from the~above that $\fin^\alpha_{M_i}\leqoo\fin^\alpha_M$ for all~$i$, and hence we are left with the~task to show that $\fin^\alpha_M\leqoo\fin^\alpha_{M_0}\oplus\dots\oplus\fin^\alpha_{M_{m-1}}$. This is witnessed by the~bijection $\langle i, n^\smallfrown t\rangle\mapsto b_i(n)^\smallfrown t$ between their domains, where $i\in m$, $n\in\omega$, and $t\in D^{\xi_{b_i(n)}}_{M_i}=D^{\xi_{b_i(n)}}_M$. 
\par 
(4) One can easily check that the~statement is true for ideal $\fin_M^2$. Let us continue using transfinite induction, so let us assume that the~statement is true for all $\beta<\alpha$, and fix a~sequence $\seqn{\alpha_n}{n}\in M$, cofinal in~$\alpha$. We assume that $Y\not\in\fin^\alpha_M$. We denote $Y_n=\set{t\in D^{\alpha_n}_M}{n^\smallfrown t\in Y}$, and we set $I=\set{n}{Y_n\not\in\fin^{\alpha_n}_M}$. By the~choice of~$Y$, we obtain that $I$ is infinite. Furthermore, by the~inductive hypothesis, the~ideal $\fin^{\alpha_n}_M\restriction Y_n$ is ($\oo$)-equivalent to $\fin^{\alpha_n}_M$ for each $n\in I$. Thus by~Lemma~\ref{finalphainductivestep}(1), ideals $\I, \J$ are ($\oo$)-equivalent, where 
\begin{center}
$\I=\fin\text{-}\sum\set{\fin^{\alpha_n}_M\restriction Y_n}{n\in I}$, and $\J=\fin\text{-}\sum\set{\fin^{\alpha_n}_M}{n\in I}$.     
\end{center}
However, by part~(2), the~ideal~$\J$ is ($\oo$)-equivalent to~$\fin^\alpha_M$. On the~other hand, we shall show that $\I$ is ($\oo$)-equivalent to $\fin^\alpha_M\restriction Y$ that would finish the~proof. Indeed, the~identity map is a~witness for $\fin^\alpha_M\restriction Y\leqoo\I$. Finally, a~witness for $\I\leqoo\fin^\alpha_M\restriction Y$ is a~map~$\varphi$ such that $\varphi$ maps the~sets $\set{n^\smallfrown t}{t\in Y_n, n\not\in I}$ and $\set{I(0)^\smallfrown t}{t\in Y_{I(0)}}$ bijectively onto the~latter one ($I(0)$ being the~$0$-th element of~$I$), and on the~remaining part of~$Y$, $\varphi$ coincides with the~identity. 
\par
(5) Ideal $\fin^n$ coincides with the~ideal $\fin^n_M$ for $n\in\omega$. Let us assume that $\fin^\beta\leqoo\fin^\beta_M$ and $\fin^\beta_M\leqk\fin^\beta$ for each~$\beta<\alpha$. The~successor stage follows by~Lemma~\ref{finalphainductivestep}(1). Therefore we assume that $\alpha$ is limit. Then $\fin^\alpha\leqoo\fin^\alpha_M$ by~Lemma~\ref{finalphainductivestep}(3).  
\par
Let $\seqn{\alpha_n}{n}\in M$ be cofinal in~$\alpha$. By part~(1), for $\alpha_k\leq\beta<\alpha_{k+1}$ we have $\fin^{\alpha_k}_M\leqk\fin^\beta_M$. Together with inductive assumption that leads to $\fin^{\alpha_k}_M\leqk\fin^\beta$ for $\alpha_k\leq\beta<\alpha_{k+1}$. The~statement for $\alpha$ then follows by Lemma~\ref{finalphainductivestep}(4). 
\par
(6) $\fin^\alpha\not\leqk\fin$ since one may easily see that $\finfin\not\leqk\fin$, and it has been already shown that $\finfin\leqoo\fin^\alpha$ for $\alpha\geq 2$. 
\par
Let $\beta<\alpha$. By part~(1), we have $\fin^\beta_M\leqk\fin^\alpha_M$, and by part~(5), it is enough to show that $\fin^\alpha\not\leqk\fin^\beta$. However, this follows by results by M.~Kat\v etov~\cite{kat72}, see~\cite{DeRa09,FiSz} as well: He has shown in his Theorem~7.1 that considering all pointwise limits of all continuous functions on a~topological space with respect to~$\fin^\xi$, one obtains exactly $\xi$-th Baire class of functions.  If $\fin^\alpha\leqk\fin^\beta$, then by M.~Kat\v etov~\cite{kat72}, Proposition~4.2, the~$\alpha$-th Baire class of functions is below $\beta$-th Baire class of functions. However, the~$\beta$-th Baire class of functions on real line is proper subset of the~$\alpha$-th Baire class of functions, see e.g. \cite[Exercise~6.1]{buk-str}.
\end{proof}

\section{The~hierarchy of $\delta$-sets}\label{S-delta_hier}

\par
The~section is devoted to the~introduction of a~hierarchy of subsets of a~topological space that can be used to define a~hierarchy of $\delta$-sets. Main result of the~section is Lemma~\ref{Lalpha-finalpha} showing that this hierarchy can be described via $\fin^\alpha$-$\gamma$-covers for Kat\v etov power ideals~$\fin^\alpha$.  
\par
If $\V$ is arbitrary family of subsets of~$X$, using operator $\Liminf$, whose definition is in Section~\ref{S-nongamma_delta}, we may define the~following hierarchy:
\begin{center}
\begin{tabular}{ccl}
$\rmL^0(\V)$&$=$&$\V$,\\[0.2cm]
$\rmL^\xi(\V)$&$=$&$\set{\Liminf\set{W_n}{n\in\omega}}{(\forall n\in\omega)\ W_n\in\bigcup\limits_{\eta<\xi}\rmL^\eta(\V)}$.\\
\end{tabular}
\end{center}
One can see that $\rmL^\alpha(\V)\subseteq\rmL^\beta(\V)$ for $\alpha<\beta$, and $\rmL^\alpha(\V)=\rmL^{\omega_1}(\V)$ for $\alpha>\omega_1$. Hence, $\rmL(\V)=\rmL^{\omega_1}(\V)$. 
\par
We shall show that there is a~different way to represent the~latter hierarchy in~$\varepsilon$-spaces. Let us recall that for an~ideal~$\I$ on a~set~$M$, we say that a~sequence $\gseqn{V_m}{m\in M}$ of proper subsets of~$X$  is an~$\I$-$\gamma$-\emph{cover} of~$X$ if $\set{m}{x\not\in V_m}\in\I$ for every $x\in X$, see~\cite{BDS,dascomb,ST2}. One can check that for ideals $\I_1\leq_\varphi\I_2$ on~$M_1,M_2$, respectively, if $\gseqn{V_m}{m\in M_1}$ is an~$\I_1$-$\gamma$-cover of~$X$, then $\gseqn{V_{\varphi(m)}}{m\in M_2}$ is an~$\I_2$-$\gamma$-cover of~$X$, see Lemma~6.1(4), in~\cite{SoSu} for instance. Finally, let us recall from previous section that $D_\alpha$ is defined inductively by $D_\alpha=\omega\times D_\beta$ for $\alpha=\beta+1$, and $D_\alpha=\sum_{\beta<\alpha} D_\beta$ for $\alpha$ limit.
\par
\begin{lema}\label{Lalpha-finalpha}
Let $X$ be a~set, $G\subseteq X$, and $\V=\set{V_n}{n\in\omega}$ a~family of subsets of~$X$ such that $V_n\neq X$ for all $n\in\omega$. There is $G'\in\rmL^\alpha(\V)$ with $G'\supseteq G$ if and only if there is $\varphi\colon D^\alpha\to\omega$ such that $\gseqn{V_{\varphi(s)}}{s\in D^\alpha}$ is a~$\fin^\alpha$-$\gamma$-cover of~$G$.
\end{lema}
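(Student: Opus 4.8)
The plan is to prove the biconditional by transfinite induction on $\alpha\geq 1$, where the induction statement $P(\alpha)$ ranges over \emph{all} triples $(X,G,\V)$ of the given form, so that the hypothesis may be reapplied to auxiliary lower sets. I work directly with the original ideals $\fin^\alpha$ and domains $D^\alpha$ (thus $\fin^{\beta+1}=\fin\times\fin^\beta$ on $D^{\beta+1}=\omega\times D^\beta$, and $\fin^\alpha=\K_\alpha\text{-}\sum\set{\fin^\beta}{\beta<\alpha}$ on $D^\alpha=\sum_{\beta<\alpha}D^\beta$ for limit $\alpha$), converting covers between levels with two facts already available: the monotonicity $\fin^\beta\leqoo\fin^\gamma$ for $\beta<\gamma$, and the observation that if $\I_1\leq_\psi\I_2$ then $\gseqn{V_{\psi(s)}}{s}$ turns an $\I_1$-$\gamma$-cover into an $\I_2$-$\gamma$-cover of the \emph{same} set. (One may equally run the induction through the variant $\fin^\alpha_M$ and transfer at the end via Proposition~\ref{finbetaleqfinalpha}(5), as the paper flags; the $\K_\alpha$-sum version below avoids the extra parameter.)

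For the base case $\alpha=1$ one has $D^1=\omega$, $\fin^1=\fin$, and $\rmL^1(\V)$ consists of the sets $\Liminf\set{V_{f(n)}}{n}$ with $f\in\baire\omega$; here $G\subseteq\Liminf\set{V_{f(n)}}{n}$ says precisely that $\set{n}{x\notin V_{f(n)}}$ is finite for every $x\in G$, i.e.\ that $\seqn{V_{f(n)}}{n}$ is a $\fin$-$\gamma$-cover of $G$, so $\varphi=f$ works both ways. In the successor step $\alpha=\beta+1$ I use $\bigcup_{\eta<\alpha}\rmL^\eta(\V)=\rmL^\beta(\V)$ and $\fin^{\beta+1}=\fin\times\fin^\beta$. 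Forward: write $G'=\Liminf\set{W_n}{n}\supseteq G$ with $W_n\in\rmL^\beta(\V)$, apply $P(\beta)$ with lower set $W_n$ (whose left side holds as $W_n\in\rmL^\beta(\V)$) to obtain $\varphi_n\colon D^\beta\to\omega$ giving a $\fin^\beta$-$\gamma$-cover of $W_n$, and put $\varphi(n^\smallfrown t)=\varphi_n(t)$; since each $x\in G$ lies in $W_n$ for all but finitely many $n$, the set $\set{(n,t)}{x\notin V_{\varphi(n,t)}}$ has cofinitely many sections in $\fin^\beta$ and so lies in $\fin\times\fin^\beta$. Backward: let $H_n=\set{x}{\set{t}{x\notin V_{\varphi(n^\smallfrown t)}}\in\fin^\beta}$, note the section $\gseqn{V_{\varphi(n^\smallfrown t)}}{t\in D^\beta}$ is a $\fin^\beta$-$\gamma$-cover of $H_n$, use $P(\beta)$ to pick $W_n\in\rmL^\beta(\V)$ with $W_n\supseteq H_n$, and check $G'=\Liminf\set{W_n}{n}\in\rmL^{\beta+1}(\V)$ contains $G$.

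The limit case carries the weight. Backward: since $\fin^\alpha=\K_\alpha\text{-}\sum\set{\fin^\beta}{\beta<\alpha}$ has sections indexed by ordinals $\beta<\alpha$, I define $H_\beta$ as above, get $W_\beta\in\rmL^\beta(\V)$ with $W_\beta\supseteq H_\beta$ from $P(\beta)$, and then \emph{sample} this ordinal-indexed family along any cofinal $\omega$-sequence $\seqn{\alpha_n}{n}$, setting $G'=\Liminf\set{W_{\alpha_n}}{n}\in\rmL^\alpha(\V)$; the $\K_\alpha$-sum condition yields, for each $x\in G$, a bound $\gamma_x<\alpha$ with $x\in H_\beta\subseteq W_\beta$ for all $\beta\geq\gamma_x$, whence $x\in W_{\alpha_n}$ cofinally and $x\in G'$. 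Forward: write $G'=\Liminf\set{W_n}{n}\supseteq G$ with $W_n\in\rmL^{\eta_n}(\V)$, $\eta_n<\alpha$. If $\sup_n\eta_n=\gamma<\alpha$ then every $W_n\in\rmL^\gamma(\V)$, so $G'\in\rmL^{\gamma+1}(\V)$ with $\gamma+1<\alpha$; $P(\gamma+1)$ gives a $\fin^{\gamma+1}$-$\gamma$-cover of $G$, promoted to a $\fin^\alpha$-$\gamma$-cover via $\fin^{\gamma+1}\leqoo\fin^\alpha$. Otherwise $\sup_n\eta_n=\alpha$, and after passing to a subsequence (which can only enlarge $\Liminf$, so $G$ is still contained) I may assume $\seqn{\eta_n}{n}$ is strictly increasing and cofinal. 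For each $\beta\in[\eta_0,\alpha)$ I set $j(\beta)=\max\set{j}{\eta_j\leq\beta}$ and place in section $\beta$ the $\fin^\beta$-promotion (via $\fin^{\eta_{j(\beta)}}\leqoo\fin^\beta$) of the $\fin^{\eta_{j(\beta)}}$-$\gamma$-cover of $W_{j(\beta)}$ furnished by $P(\eta_{j(\beta)})$.

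The crux, which I expect to be the main obstacle, is verifying that this last assignment is genuinely a $\fin^\alpha$-$\gamma$-cover, i.e.\ that no cofinal family of ``hole'' sections destroys membership in the $\K_\alpha$-sum. The key point is that $j(\beta)\to\infty$ as $\beta\to\alpha$ (because $\seqn{\eta_j}{j}$ is cofinal): given $x\in G$ with threshold $J_x$ satisfying $x\in W_j$ for all $j\geq J_x$, every $\beta\geq\eta_{J_x}$ has $j(\beta)\geq J_x$, hence $x\in W_{j(\beta)}$, so section $\beta$ is good for $x$; the exceptional set of $\beta$'s is then bounded by $\eta_{J_x}<\alpha$ and thus lies in $\K_\alpha$. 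The two delicate points throughout are (i) filling sections so that \emph{every} sufficiently large section covers a given $x$, rather than leaving unbounded gaps, and (ii) reconciling the arbitrary levels $\eta_n$ appearing in the definition of $\rmL^\alpha$ with the section index $\beta$ of $\fin^\beta$; both are absorbed by the running maximum $j(\beta)$ together with the $\leqoo$-monotonicity of the $\fin^\bullet$-hierarchy. Lemma~\ref{finalphainductivestep}(1),(3) is available should one prefer to phrase the assembly of sections as a single inequality between iterated sums instead of unwinding the $\K_\alpha$-sum pointwise.
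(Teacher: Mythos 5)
Your proof is correct, but it takes a genuinely different route from the paper's at the limit stages. The paper never confronts the $\K_\alpha$-sum inside the induction: it runs the whole argument through the auxiliary ideals $\fin^\alpha_M$, which by construction are $\fin$-sums over an $\omega$-indexed cofinal sequence at \emph{every} stage (successor and limit alike), so a single uniform argument covers both cases --- the forward direction reindexes $G'=\Liminf\set{W_i}{i\in\omega}$ into a sequence $U_n=W_i$ for $n_i\leq n<n_{i+1}$ with $U_n\in\rmL^{\alpha_n}(\V)$, and the backward direction reads off the sets $W_n$ from the $\omega$ many sections; the translation between $\fin^\alpha_M$ and $\fin^\alpha$ is outsourced entirely to Proposition~\ref{finbetaleqfinalpha}(5). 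You instead keep the original $\K_\alpha$-sum and pay for it with a case split at limits: your backward direction samples the ordinal-indexed family $\set{W_\beta}{\beta<\alpha}$ along a cofinal $\omega$-sequence, and your forward direction uses the running maximum $j(\beta)$ together with the promotions $\fin^{\eta_{j(\beta)}}\leqoo\fin^\beta$ --- which is really the paper's reindexing trick transplanted to ordinal-indexed sections. What the paper's route buys is uniformity and a shorter verification, at the cost of the $\fin^\alpha_M$ machinery of Section~\ref{S-katetov}; what yours buys is independence from that machinery, needing only the known monotonicity $\fin^\beta\leqoo\fin^\gamma$ and the observation that $\leq_\psi$ transports $\I$-$\gamma$-covers.

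Two cosmetic points to tighten. In the limit forward case you leave the sections $\beta<\eta_0$ of $D^\alpha$ undefined; since $\set{\beta}{\beta<\eta_0}\in\K_\alpha$ these may be filled arbitrarily, but say so. In the limit backward case the phrase ``$x\in W_{\alpha_n}$ cofinally'' should read ``for all but finitely many $n$'' --- which is what your bound $\beta\geq\gamma_x$ actually delivers, and what membership in $\Liminf\set{W_{\alpha_n}}{n\in\omega}$ requires.
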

\begin{proof}
We take a~set~$M$ containing cofinal sequences, and we shall work with ideals $\fin^\alpha_M$, which are K-equivalent to~$\fin^\alpha$. The~proof is accomplished by induction. Let $\alpha\in\omega_1$ and let $\seqn{\alpha_n}{n}\in M$ be cofinal in~$\alpha$ (this includes $\alpha_n=\beta$ for all~$n$ if $\alpha=\beta+1$).
\par
Let us assume that $G'\in\rmL^\alpha(\V)$ with $G'\supseteq G$. By definition, there is $W_i\subseteq X$ and $\beta_i<\alpha$ such that $W_i\in\rmL^{\beta_i}(\V)$ and $G'=\Liminf\set{W_i}{i\in\omega}$. Since $\rmL^\beta(\V)$ is monotone with respect to~$\beta$, and $\seqn{\alpha_n}{n}$ is cofinal in~$\alpha$, there is an~increasing sequence~$\seqn{n_i}{i}$ such that $W_i\in\rmL^{\alpha_{n_i}}(\V)$. We define a~sequence $\seqn{U_n}{n}$ such that $G'=\Liminf\set{U_n}{n\in\omega}$ and $U_n\in\rmL^{\alpha_n}(\V)$ by setting $U_n=W_i$ for $n_i\leq n<n_{i+1}$ and $U_n=W_0$ for $n<n_0$. By the~inductive hypothesis on~$\alpha_n$, there is $\varphi_n\colon D^{\alpha_n}_M\to\omega$ such that $\gseqn{V_{\varphi_n(t)}}{t\in D_M^{\alpha_n}}$ is a~$\fin^{\alpha_n}_M$-$\gamma$-cover of~$U_n$. We define $\varphi\colon D_M^\alpha\to\omega$ by setting $\varphi(n^\smallfrown t)=\varphi_n(t)$ for each $t\in D_M^{\alpha_n}$. We claim that $\gseqn{V_{\varphi(s)}}{s\in D_M^\alpha}$ is a~$\fin^\alpha_M$-$\gamma$-cover of $G'$. Indeed, let $x\in G'$. By the~choice of~$\seqn{U_n}{n}$, there is an~$n_0$ such that $x\in U_n$ for each $n>n_0$. However, for each $n>n_0$ we have $\set{t\in D_M^{\alpha_n}}{x\not\in V_{\varphi_n(t)}}\in\fin^{\alpha_n}_M$. Hence, $\set{n^\smallfrown t\in D_M^\alpha}{x\not\in V_{\varphi(n^\smallfrown t)}}\in\fin^\alpha_M$.
\par
To prove the~reversed implication, we assume there is $\varphi\colon D_M^\alpha\to\omega$ such that $\gseqn{V_{\varphi(s)}}{s\in D_M^\alpha}$ is a~$\fin^\alpha_M$-$\gamma$-cover of~$G$. We set
\[
W_n=\set{x\in X}{\set{t\in D_M^{\alpha_n}}{x\not\in V_{\varphi(n^\smallfrown t)}}\in\fin^{\alpha_n}_M}.
\]
Thus $\gseqn{V_{\varphi(n^\smallfrown t)}}{t\in D_M^{\alpha_n}}$ is a~$\fin^{\alpha_n}_M$-$\gamma$-cover of~$W_n$, and we may use the~inductive hypothesis, so there is $W_n'\in\rmL^{\alpha_n}(\V)$ with $W_n'\supseteq W_n$. We set $G'=\Liminf\set{W_n'}{n\in\omega}$, and we shall show $G'\supseteq G$. If $x\in G$, then $\set{s\in D_M^\alpha}{x\not\in V_{\varphi(s)}}\in\fin^\alpha_M$. Thus there is an~$n_0$ such that $\set{t\in D_M^{\alpha_n}}{x\not\in V_{\varphi(n^\smallfrown t)}}\in\fin^{\alpha_n}_M$ for any $n>n_0$, so $x\in W_n\subseteq W_n'$ for any such~$n$. Then necessarily $x\in G'$.
\end{proof}
\par
Lemma~\ref{Lalpha-finalpha} provides a~criterion for a~set of reals $A\subseteq\PP(\omega)$ to judge it is not a~$\delta$-set.
\begin{lema}\label{nothavingdelta}
Let $A\subseteq\PP(\omega)$ possess \fup. If $A$ is a~$\delta$-set, then there is $\alpha<\omega_1$ such that $A\leqk\fin^\alpha$.   
\end{lema}
\begin{proof}
Let us assume that $A$ has \fup, and $\bigcup A=\omega$ (otherwise indeces run on~$\bigcup A$). Then $\V=\set{V_n}{n\in\omega}$ for $V_n=\set{a\in A}{n\not\in a}$ is an~$\omega$-cover of~$A$,
since no $V_n$ contains entire $A$. 
\par
Let $A$ have property~$\delta$. There is $\alpha<\omega_1$ such that $X\in\rmL^\alpha(\V)$. By Lemma~\ref{Lalpha-finalpha}, there is $\varphi\colon D^\alpha\to\omega$ such that $\gseqn{V_{\varphi(s)}}{s\in D^\alpha}$ is a~$\fin^\alpha$-$\gamma$-cover of~$A$. Thus for $a\in A$ we have $\varphi^{-1}(a)=\set{s\in D^\alpha}{a\not\in V_{\varphi(s)}}\in\fin^\alpha$. 
\end{proof}
\par
Another consequence of Lemma~\ref{Lalpha-finalpha} is an~alternative representation of the~hierarchy of $\delta$-sets. However, we need to recall the~definition of ideal $\gamma$-sets from~\cite{Su20}. A~topological space~$X$ is an~$\subselmgR{\J}$-space, or possesses the~property
\[
\subselmg{\J},
\] 
if for every open $\omega$-cover~$\V$ of~$X$ there is a~$\J$-$\gamma$-cover $\seqn{V_n}{n}$ of~$X$ with each~$V_n\in\V$. $\Omega$ in the~notation stands for the~family of all open $\omega$-covers of~$X$, and $\J\text{-}\Gamma$ is the~family of all $\J$-$\gamma$-covers of~$X$, see~\cite{Comb1,BDS}. Finally, let us state the~definition of a~$\gamma$-set and a~$\delta$-set via the~introduced hierarchy. Note that a~sequence $\seqn{V_n}{n}$ of subsets of~$X$ is a~$\fin$-$\gamma$-cover of~$X$ if and only if $\set{V_n}{n\in\omega}$ is a~$\gamma$-cover of~$X$. Thus, a~topological space~$X$ is a~$\gamma$-set if and only if $X\in\rmL^1(\V)$ for any open $\omega$-cover~$\V$. Moreover, $X$ has property~$\delta$ if and only if for any open $\omega$-cover~$\V$ there is $\alpha<\omega_1$ such that $X\in\rmL^\alpha(\V)$.
\begin{theorem}\label{subprinciple-deltalevels}
An~$\varepsilon$-space $X$ is an~$\subselmgR{\fin^\alpha}$-space if and only if $X\in\rmL^\alpha(\V)$ for any open $\omega$-cover~$\V$.
\end{theorem}
\begin{proof}
Taking $G=X$ in~Lemma~\ref{Lalpha-finalpha}, we obtain the~assertion.
\end{proof}
\par
\begin{corol}\label{subprinciple_katetov-delta}
If an~$\varepsilon$-space $X$ is an~$\subselmgR{\fin^\alpha}$-space then $X$ is a~$\delta$-set.
\end{corol}
\par
However, for our purposes, we need more versions of ideal $\gamma$-sets. Let $\square=\text{\rm 1-1}, \mathrm{KB}, \mathrm{K}$. A~topological space~$X$ is an~$\subselmgsqR{\J}$-space if for every open $\omega$-cover~$\V$ there is a~$\J$-$\gamma$-cover $\seqn{V_n}{n}$ such that each~$V_n$ belongs to~$\V$ and is repeated at most once, finitely many times, arbitrary many times in the~sequence $\seqn{V_n}{n}$, respectively. We immediately obtain
\[
\gamma\to\subselmgoo{\J}\to\subselmgkb{\J}\to\subselmgk{\J}.
\]
In case of $\subselmgkR{\J}$-space we usually abuse the~subscript and we write $\subselmgR{\J}$-space, as was introduced above. Note that an~$\varepsilon$-space~$X$ is an~$\subselmgsqR{\J}$-space if and only if for every countable open $\omega$-cover $\seqn{V_n}{n}$ there is a~$\square$-function $\varphi\in\baire\omega$ such that $\seqn{V_{\varphi(m)}}{m}$ is a~$\J$-$\gamma$-cover. In particular, we may assume here that~$\varphi$ is $\J$-to-one. For more, see~\cite{Su20}.
\par


\section{A~$\pi$-set that is not a~$\delta$-set}\label{S-pytkeev_nondelta}

M.~Sakai~\cite{SakQuad} has shown that any~$\delta$-set is a~$\pi$-set and raised the~following question. 
\begin{problem}[M.~Sakai]\label{prob-pi_delta}
Is every $\pi$-set also a~$\delta$-set?
\end{problem}
The~question was raised as Problem~4.2 in~\cite{SakQuad} (Question~3 in~\cite{Sak03} is also close). We answer the~question negatively, showing in~Theorem~\ref{pi-nondelta} that under~$\CH$ there is a~$\pi$-set of reals that is not a~$\delta$-set. However, we need more tools.
\par
For $\A\subseteq\PP(M)$ we denote $\A^\ast=\set{M\setminus A}{ A\in\A}$. A~family $\F\subseteq\PP(M)$ is a~filter if $\F^\ast$ is an~ideal. A~maximal filter $\U\subseteq\PP(M)$ is called an~ultrafilter. For an~ideal $\K\subseteq\PP(M)$ we set $\K^{+}=\PP(M)\setminus\K$. One can see that $B\in\K^+$ if and only if $M\setminus B\notin\K^\ast$.
\par
Let $b\in[\omega]^\omega$, then the~intervals $[b(j),b(j+1))$ form a~partition of~$\omega$. Such a~partition induces a~surjective finite to one map~$\bi_b\colon\omega\to\omega$ assigning to a~natural number the~index of the~interval it belongs to, i.e., $\bi_b(n)=j$ whenever $n\in[b(j),b(j+1))$.  Note that 
\[
\bi_b[a]=\set{j\in\omega}{[b(j),b(j+1))\cap a\neq\emptyset}.
\]
We say that 
\begin{itemize}
    \item $\I\leqrestk\J$ if there is $J\in\J^+$ such that $\I\leqk\J\restriction J$ (according to the~notation~$\leq_{\rm BE}^+$ in~\cite{farah}).
    \item $\I\leqcountk\J$ if there is a~function $f\in\baire\omega$ and a~countable family $B\subseteq\infin$ such that for any $a\subseteq\omega$ we have:
    \begin{center}
    if $\bi_b[a]\in\I$ for any $b\in B$, then $f^{-1}[a]\in\J$.
    \end{center}
\end{itemize}
Similarly to Kat\v etov order, the~function~$f$ in the~definition of~$\leqcountk$ has to be $\J$-to-one. We shall use this fact without any comment. One can check that if $\I_1\leqcountk\I_2$ and $\I_2\leqk\I_3$, then $\I_1\leqcountk\I_3$. Immediately from the~definition we have 
\[
\I\leqcountk\J\longleftarrow \I\leqk\J\longrightarrow \I\leqrestk\J.
\]
We include a~proof of auxiliary assertions. 
\begin{lema}\label{gfinnotlessi}
\begin{enumerate}[(1)]
    \item If $\I\not\leqrestk\J$, then for any $f\in\baire\omega$, $b\in\infin$, and $N\in\J^+$ there is $a\in[\omega]^\omega$ and $M\in\J^+\cap[N]^\omega$ such that $M\subseteq f^{-1}[a]$, and $\bi_b[a]\in\I$.
    \item If $\I\not\leqcountk\J$, then for any $\J$-to-one function $f\in\baire\omega$ and countable $B\subseteq\infin$ containing~$\omega$, there is an~infinite set $a\in\I$ such that $f^{-1}[a]\in\J^+$ and $\bi_b[a]\in\I$ for any $b\in B$.     
\end{enumerate}
\end{lema}
\begin{proof}
(1) We define $g\in\baire\omega$ by setting $g=\bi_b\circ f$, i.e., $g(m)=j$ whenever $f(m)\in[b(j),b(j+1))$. By $\I\not\leqk\J\restriction N$, there is an~infinite $c\in\I$ such that $M=(g\restriction N)^{-1}[c]\in\J^+\cap[N]^\omega$. We set $a=\bi_b^{-1}[c]=\bigcup\set{[b(j),b(j+1))}{j\in c}\in\infin$. One can see that $f^{-1}[a]=g^{-1}[c]\supseteq M$ and that $\bi_b[a]=c\in\I$.
\par
(2) Let $f\in\baire\omega$ be $\J$-to-one, $\omega\in B\subseteq\infin$, $|B|\leq\omega$. By $\I\not\leqcountk\J$, there is $a\in\infin$ such that $f^{-1}[a]\in\J^+$ and $\bi_b[a]\in\I$ for any $b\in B$. Moreover, $a=\bi_\omega[a]\in\I$.
\end{proof}
\par
A~$\pi$-set constructed to solve Problem~\ref{prob-pi_delta} will be an~$\subselmgR{\J}$-space for a~suitable ideal~$\J$. Therefore we need a~suitable criterion to judge that the~constructed set is an~$\subselmgR{\J}$-space. We say that the~family $\set{b_\xi}{\xi<\kappa}\subseteq\infin$ {\it codes representative covers} \label{evades} of~$\set{a_\xi}{\xi<\kappa}\subseteq\infin$ if each open $\omega$-cover of~$\set{a_\xi}{\xi<\kappa}\cup\fin$ contains a~subfamily~$\set{V_j}{j\in\omega}$ such that for some~$\eta<\kappa$ we have:
\begin{enumerate}[(1)]
    \item For each $j\in\omega$ we have $V_j\supseteq\set{a\subseteq\omega}{[b_{\eta}(j),b_{\eta}(j+1))\cap a=\emptyset}$,
    \item  $\set{V_j}{j\in\omega}$ is a~$\gamma$-cover of~$\set{a_\xi}{\xi<\eta}\cup\fin$.
\end{enumerate}
We assume that the~sequence $\seqn{V_j}{j}$ is injective. We say that the~family $\set{a_\xi}{\xi<\kappa}\subseteq\infin$ {\it $\J$-evades a~family} $\set{b_\xi}{\xi<\kappa}\subseteq\infin$ if for any $\alpha<\kappa$ we have 
    \begin{center}
    $\set{j\in\omega}{[b_\beta(j),b_\beta(j+1))\cap a_\alpha\neq\emptyset}\in\J$ for any $\beta\leq\alpha$.
    \end{center}
$\set{a_\xi,b_\xi}{\xi<\kappa}$ is {\it $\J$-evading double scale} if $\set{a_\xi}{\xi<\kappa}$ $\J$-evades family $\set{b_\xi}{\xi<\kappa}$ and $\set{b_\xi}{\xi<\kappa}$ codes representative covers of~$\set{a_\xi}{\xi<\kappa}$.
\begin{lema}\label{representativefamilies}
If $\set{a_\xi,b_\xi}{\xi<\kappa}$ is $\J$-evading double scale then the~set $A=\set{a_\alpha}{\alpha<\kappa}\cup\fin$ is an~$\subselmgooR{\J}$-space.
\end{lema}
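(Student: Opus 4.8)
The plan is to unwind the definition of an $\subselmgooR{\J}$-space directly from the two clauses packaged into the hypothesis that $\set{a_\xi,b_\xi}{\xi<\kappa}$ is a $\J$-evading double scale. Fix an arbitrary open $\omega$-cover $\V$ of $A=\set{a_\alpha}{\alpha<\kappa}\cup\fin$. Since $\set{b_\xi}{\xi<\kappa}$ codes representative covers of $\set{a_\xi}{\xi<\kappa}$, I would first extract from $\V$ an injective subfamily $\seqn{V_j}{j}$ together with an ordinal $\eta<\kappa$ such that each $V_j\supseteq\set{a\subseteq\omega}{[b_\eta(j),b_\eta(j+1))\cap a=\emptyset}$ and $\set{V_j}{j\in\omega}$ is a $\gamma$-cover of $\set{a_\xi}{\xi<\eta}\cup\fin$. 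As $\V$ is an $\omega$-cover we have $A\notin\V$, so every $V_j$ is a proper subset of $A$; together with injectivity this already witnesses the one-to-one requirement in the definition of $\subselmgooR{\J}$, and it remains only to check that $\seqn{V_j}{j}$ is a $\J$-$\gamma$-cover of $A$.

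The core of the argument is to verify, for each $x\in A$, that $\set{j\in\omega}{x\notin V_j}\in\J$, splitting $A$ into three parts. For $x\in\fin$ and for $x=a_\xi$ with $\xi<\eta$, the second clause above says $x$ lies in all but finitely many $V_j$, so $\set{j}{x\notin V_j}$ is finite and hence belongs to $\J$ (recall that ideals contain all finite sets). The only genuine work is in the remaining part, the elements $a_\alpha$ with $\eta\le\alpha<\kappa$.

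For such $a_\alpha$, the key step uses the first clause contrapositively: if $a_\alpha\notin V_j$ then necessarily $[b_\eta(j),b_\eta(j+1))\cap a_\alpha\neq\emptyset$, whence
\[
\set{j\in\omega}{a_\alpha\notin V_j}\subseteq\set{j\in\omega}{[b_\eta(j),b_\eta(j+1))\cap a_\alpha\neq\emptyset}.
\]
Now I would invoke the $\J$-evading property with $\beta=\eta$: since $\eta\le\alpha$ is exactly the inequality required there, the right-hand set lies in $\J$, and by heredity of $\J$ so does the left-hand set. This is where the whole construction is tuned to match up: the representative cover is indexed by the single $b_\eta$, and $\eta\le\alpha$ is precisely what lets the evading clause apply. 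Combining the three cases shows that $\seqn{V_j}{j}$ is a $\J$-$\gamma$-cover drawn injectively from $\V$, so $A$ is an $\subselmgooR{\J}$-space.

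I expect no serious obstacle here; the proof is a short bookkeeping argument, and the only delicate point is aligning the index $\eta$ of the extracted cover with the quantifier $\beta\le\alpha$ in the definition of $\J$-evading, which the double-scale hypothesis is designed to guarantee.
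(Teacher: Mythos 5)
Your proposal is correct and follows essentially the same route as the paper's proof: extract the representative cover indexed by some $b_\eta$, note it is a $\gamma$-cover (hence $\J$-$\gamma$-cover) of $\set{a_\xi}{\xi<\eta}\cup\fin$, and handle $a_\alpha$ for $\alpha\geq\eta$ via the inclusion $\set{j}{a_\alpha\notin V_j}\subseteq\set{j}{[b_\eta(j),b_\eta(j+1))\cap a_\alpha\neq\emptyset}\in\J$ supplied by the $\J$-evading clause. Your explicit remarks on injectivity and properness of the $V_j$ are minor bookkeeping details the paper leaves implicit.
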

\begin{proof}
To see that $A$ is an~$\subselmgooR{\J}$-space, let~$\V$ be an~open $\omega$-cover of~$A$. There is $\set{V_j}{j\in\omega}\subseteq\V$ that $\gamma$-covers $\set{a_\xi}{\xi<\eta}\cup\fin$ for some $\eta<\kappa$, and for each $j\in\omega$ we have 
\[
A\setminus V_j\subseteq\set{a\in A}{[b_\eta(j),b_\eta(j+1))\cap a\neq\emptyset}.
\]
By the~first property, $\seqn{V_j}{j}$ is a~$\J$-$\gamma$-cover of~$\set{a_\xi}{\xi<\eta}\cup\fin$. Furthermore, to see that $\seqn{V_j}{j}$ is a~$\J$-$\gamma$-cover of the~whole $A$, note that for $\alpha\geq\eta$ we have $\set{j\in\omega}{a_\alpha\not\in V_j}\subseteq\set{j\in\omega}{a_\alpha\cap[b_\eta(j),b_\eta(j+1))\neq\emptyset}\in\J$.
\end{proof}
\par
The~basic construction in our solution to Problem~\ref{prob-pi_delta} is accomplished in the~following assertion, where we construct a~$\J$-evading double scale of size~$\cc$.
\begin{theorem}[\CH]\label{subseqn-nond}
If $\J\not\leqcountk\fin^\alpha$ for each $\alpha<\omega_1$, then there is an~$\subselmgooR{\J}$-space $A\subseteq\PP(\omega)$ that is not a~$\delta$-set.
\end{theorem}
\begin{proof}
We fix a~bijection $\varrho\colon\cc\times\cc\to\cc$, and an~enumeration $\set{f_\alpha}{\alpha<\cc}$ of functions such that for each $\xi<\cc$, the~family $\set{f_{\varrho(\xi,\eta)}}{\eta<\cc}$ is the~family of all $\fin^\xi$-to-one functions from~${}^{D_\xi}\omega$. By transfinite induction, see the~last paragraph of the~proof, we shall construct  families $\set{a_\alpha}{\alpha<\cc}, \set{b_\alpha}{\alpha<\cc}\subseteq\infin$ such that $\set{b_\xi}{\xi<\cc}$ codes representative covers of~$\set{a_\xi}{\xi<\cc}$, and that the~conditions (i) - (iii) for $\xi,\eta<\cc$ with $\varrho(\xi,\eta)=\alpha$ are satisfied:
 \begin{enumerate}[(i)]
    \item $f_\alpha^{-1}[a_\alpha]\not\in\fin^\xi$,
    \item $a_\alpha\in\J$,
    \item $\set{j\in\omega}{[b_\beta(j),b_\beta(j+1))\cap a_\alpha\neq\emptyset}\in\J$ for any $\beta\leq\alpha$.
\end{enumerate}
Then $A=\set{a_\alpha}{\alpha<\cc}\cup\fin$ is an~$\subselmgR{\J}$-space. It follows by~Lemma~\ref{representativefamilies}, since the~family $\set{b_\xi}{\xi<\cc}$ codes representative covers of~$\set{a_\xi}{\xi<\cc}$ and condition~(iii) holds. Moreover, $A$ does not possess property~$\delta$. This is by Lemma~\ref{nothavingdelta}, since $A\not\leqk\fin^\xi$ for each $\xi<\cc$, and $A$ has \fup. 
\par
To proceed with the~construction, we fix an~enumeration $\set{\V_\alpha}{\alpha<\cc}$ of all countable open $\omega$-covers of~$\fin$ (elements of~$\V_\alpha$ are open sets in~$\PP(\omega)$). Once we have $\set{a_\beta}{\beta<\alpha}$, $\set{b_\beta}{\beta<\alpha}$ constructed, we consider~$\V_\alpha$. If~$\V_\alpha$ is not an~$\omega$-cover of $\set{a_\beta}{\beta<\alpha}\cup\fin$, then $b_\alpha$ may be arbitrary, e.g., $b_\alpha=\omega$. Otherwise, $\V_\alpha$ is an~$\omega$-cover of $\set{a_\beta}{\beta<\alpha}\cup\fin$, and we pick $b_\alpha\in\infin$ from Lemma~\ref{GMlemma}(2). Furthemore, by Lemma~\ref{gfinnotlessi}(2), we may pick~$a_\alpha\in\infin$ (considering $B=\set{b_\beta}{\beta\leq\alpha}\cup\{\omega\}$) such that the~conditions (i) - (iii) are satisfied, respectively.
\end{proof}
\par
By Theorem~\ref{subseqn-nond}, to solve Problem~\ref{prob-pi_delta}, it is enough to find an~appropriate ideal~$\J$ such that $\J\not\leqcountk\fin^\alpha$ for each $\alpha<\omega_1$, and that every $\subselmgR{\J}$-space is a~$\pi$-set. In fact, as we shall show later, the~appropriate ideal is any $\G^\ast\times\fin$ for arbitrary ultrafilter~$\G$. A~family~$A\subseteq\PP(\omega)$ is $\omega$-hitting if for any sequence~$\seqn{a_n}{n}$ of infinite subsets of~$\omega$ there is a~set~$a\in A$ such that $a_n\cap a$ is infinite for each $n\in\omega$. One can easily check that $\G^\ast\times\fin$ is not $\omega$-hitting, therefore the~following lemma is very useful.
\begin{lema}\label{subsequenceimpliespi}
Let $X$ be an~$\varepsilon$-space. If $\J$ is not $\omega$-hitting, then any~$\subselmgkbR{\J}$-space has property~$\pi$.
\end{lema}
\begin{proof}
Since $\J$ is not $\omega$-hitting there is a~sequence~$\seqn{J_n}{n}$ of infinite subsets of~$\omega$ such that for any set~$J\in\J$ there is $n_0\in\omega$ with $J_{n_0}\cap J\in\fin$.
\par
Let $\set{V_k}{k\in\omega}$ be an~$\omega$-cover. By the~assumption, there is a~finite-to-one function $\varphi\colon\omega\to\omega$ such that $\seqn{V_{\varphi(i)}}{i}$ is a~$\J$-$\gamma$-cover. We set~$\V_n=\set{V_{\varphi(i)}}{i\in J_n}$. Each~$\V_n$ is infinite. Moreover, for a~finite set~$F\subseteq X$ we have $\set{i}{F\not\subseteq V_{\varphi(i)}}\in\J$. Hence, there is $n_0$ such that $\set{i\in J_{n_0}}{F\not\subseteq V_{\varphi(i)}}$ is finite. 
\end{proof}
\begin{lema}\label{Gfin-notplus}
$\G^\ast\not\leqrestk\I$, $\G^\ast\times\fin\not\leqrestk\I$ for any ultrafilter~$\G$ and a~Borel ideal~$\I$.
\end{lema}
\begin{proof}
One can check that a~Borel ideal~$\I$ restricted to any $\I$-positive set is again Borel. Moreover, since $\G^\ast\leqk\G^\ast\times\fin$, we obtain $\G^\ast\times\fin\not\leqk\I$ as a~consequence of $\G^\ast\not\leqk\I$. Thus we shall prove just the~latter. To get the~contradiction, assume that there is a~function $\varphi\colon\omega\to\omega$ such that $\G^\ast\leq_\varphi\I$. One can check that the~induced map $F\colon\PP(\omega)\to\PP(\omega)$ defined by $F(Y)=\varphi^{-1}[Y]$ is continuous. By the~assumption $\G^\ast\leq_\varphi\I$, we immediately obtain $\G^\ast\subseteq F^{-1}[\I]$. Moreover, once we have $Y\subseteq\omega$ with $F(Y)\in\I$, we necessarily have $Y\in\G^\ast$ as well, since otherwise $F(Y)\in\I^\ast$ by the~maximality of~$\G$ and $\G^\ast\leq_\varphi\I$. Thus $\G^\ast=F^{-1}[\I]$, and therefore $\G^\ast$ is Borel, a~contradiction. 
\end{proof}
Let us recall that an~ideal $\J$ is a~$\rmP^+$-ideal if any decreasing sequence $\seqn{J_n}{n}$ of $\J$-positive sets has a~$\J$-positive pseudointersection. Moreover, according to~Section~\ref{S-katetov}, for a~sequence of ideals $\seqn{\J_n}{n}$ we denote $\J_\infty=\fin\text{-}\sum\set{\J_n}{n\in\omega}$.
\begin{prop}\label{onestepsubseqnonsubseq}
\begin{enumerate}[(1)]
    \item Let $\J$ be a~$\rmP^+$-ideal. If $\I\leqcountk\J$, then $\I\leqrestk\J$.
    \item Let $\I\not\leqcountk\J_n\restriction Y$ for any $Y\in\J_n^+$ and $n\in\omega$. If $\I\leqcountk\J_\infty$, then $\I\leqrestk\J_\infty$. 
\end{enumerate}
\end{prop}
\begin{proof}
(1) Suppose $\I\not\leqrestk\J$, we shall show that $\I\not\leqcountk\J$. Therefore, pick arbitrary $f\in\baire\omega$ and $B=\set{b_n}{n\in\omega}$. We need an~infinite set~$a$ with $f^{-1}[a]\in\J^+$ and $\bi_{b_n}[a]\in\I$ for each~$n$. To find such~$a$, we first construct inductively using Lemma~\ref{gfinnotlessi}(1), a~decreasing sequence $\set{A_n}{n\in\omega}\subseteq\J^+$ and a~sequence $\set{a_n}{n\in\omega}\subseteq\infin$ such that $A_n\subseteq f^{-1}[a_n]$, and 
\[
\bi_{b_n}[a_n]=\set{j\in\omega}{[b_n(j),b_n(j+1))\cap a_n\neq\emptyset}\in\I.
\]
Afterwards, we pick a~pseudointersection~$A\in\J^+$ of $\set{A_n}{n\in\omega}$. $A$ is a~pseudointersection of $\set{f^{-1}[a_n]}{n\in\omega}$ as well. Finally, we set $a=f[A]$. Then we have immediately $f^{-1}[a]\in\J^+$. Furthermore, $\bi_{b_n}[a]\in\I$ for each~$n$, since $a$ is almost contained in each $a_n$.
\par
(2) Suppose $\I\not\leqrestk\J_\infty$, we shall show that $\I\not\leqcountk\J_\infty$. Therefore, pick arbitrary function $f\colon\omega\times\omega\to\omega$ and the~set $B=\set{b_n}{n\in\omega}$. Our aim is to find an~infinite set~$a$ with $f^{-1}[a]\in\J_\infty^+$ and $\bi_{b_n}[a]\in\I$ for each $n\in\omega$. We set $d_{-1}=\omega$ and $m_{-1}=0$. 
\par
Taking $k\in\{-1\}\cup\omega$, we assume we have the~number~$m_k$, and the~infinite set $d_k\subseteq\omega$ with $u_k=f^{-1}[d_k]\in\J_\infty^+$. We shall consider the~map $f\restriction u_k:u_k\to d_k$ and the~set~$b_{k+1}$. By the~assumption $\I\not\leqrestk\J_\infty$ together with Lemma~\ref{gfinnotlessi}(1), we obtain an~infinite $d_{k+1}\subseteq d_k$ such that $(f\restriction u_k)^{-1}[d_{k+1}]\in\J_\infty^+$ and $\bi_{b_{k+1}}[d_{k+1}]\in\I$. Thus by the~definition of~$\J_\infty$, there is $m_{k+1}>m_k$ such that $c_{k+1}=\set{i}{(m_{k+1},i)\in (f\restriction u_k)^{-1}[d_{k+1}]}\in\J_{m_{k+1}}^+$. We set $h_{k+1}(i)=f(m_{k+1},i)$ for any $i\in c_{k+1}$. Then $h_{k+1}\colon c_{k+1}\to d_{k+1}$, and by the~assumption that $\I\not\leqcountk\J_{m_{k+1}}\restriction c_{k+1}$, we have an~infinite $a_{k+1}\subseteq d_{k+1}$ such that $h_{k+1}^{-1}[a_{k+1}]\in\J_{m_{k+1}}^+$, and 
\[
(\forall n\in\omega)\ \bi_{b_n}[a_{k+1}]\in\I.    
\]
\par
We set $a=\bigcup_{l\in\omega}a_l$. We claim that $f^{-1}[a]\in\J_\infty^+$ and $\bi_{b_n}[a]\in\I$ for each $n\in\omega$. Indeed, to prove the~first assertion, note that $\seqn{m_n}{n}$ is injective and $\set{j}{(m_n,j)\in f^{-1}[a]}\supseteq h_n^{-1}[a_n]\in\J_{m_n}^+$ for each $n\in\omega$. For the~second one, let us pick $n\in\omega$ and consider $\bi_{b_n}[a]$. We have 
\[
\bi_{b_n}[a]\subseteq \bi_{b_n}[d_n]\cup\bigcup_{l<n}\bi_{b_n}[a_l]\in\I.    
\]
\end{proof}
\par
\begin{lema}\label{Gfin-notcountable}
Let $0<\alpha<\omega_1$. Then $\G^\ast\times\fin\not\leqcountk\fin^\alpha$ for any ultrafilter~$\G$.
\end{lema}
\begin{proof}
We have $\G^\ast\times\fin\not\leqcountk\fin$ by Proposition~\ref{onestepsubseqnonsubseq}(1). Moreover, it follows that $\G^\ast\times\fin\not\leqcountk\fin\restriction Y$ for any infinite~$Y\subseteq\omega$. We continue by transfinite induction.
\par
Let $\alpha\geq2$, and $M\subseteq\baire\omega_1$ contain cofinal sequences. Since $\fin^\alpha$ is Kat\v etov equivalent to~$\fin^\alpha_M$, using Lemma~\ref{Gfin-notplus}, we conclude that $\G^\ast\times\fin\not\leqk\fin^\alpha_M$, and hence $\G^\ast\times\fin\not\leqrestk\fin^\alpha_M$ by Proposition~\ref{finbetaleqfinalpha}(4). By recursive assumption, we know that $\G^\ast\times\fin\not\leqcountk\fin^\beta_M$ for all $\beta<\alpha$, and again by Proposition~\ref{finbetaleqfinalpha}(4), we obtain $\G^\ast\times\fin\not\leqcountk\fin^\beta_M\restriction Y$ for $Y\not\in\fin^\beta_M$. Now a~direct application of Proposition~\ref{onestepsubseqnonsubseq}(2) gives $\G^\ast\times\fin\not\leqcountk\fin^\alpha_M$, and hence also $\G^\ast\times\fin\not\leqcountk\fin^\alpha$.
\end{proof}
\par
\begin{theorem}[\CH]\label{pi-nondelta}
There is a~$\pi$-set $A\subseteq\PP(\omega)$ that is not a~$\delta$-set. 
\end{theorem}
\begin{proof}
By Lemma~\ref{Gfin-notcountable}, we have $\G^\ast\times\fin\not\leqcountk\fin^\alpha$ for an~ultrafilter~$\G$. Thus by Theorem~\ref{subseqn-nond}, there is an~$\subselmgooR{(\G^\ast\times\fin)}$-space $A\subseteq\PP(\omega)$ that is not a~$\delta$-set. Moreover, $\G^\ast\times\fin$ is not $\omega$-hitting, therefore $A$ is a~$\pi$-set by~Lemma~\ref{subsequenceimpliespi}.
\end{proof}
\par


\section{Ideal $\gamma$-sets, and properties~$\gamma$, $\pi$, $\delta$}

\par
One can easily see that any $\gamma$-set is an~$\subselmgooR{\J}$-space. Naturally, if $\J$ is not \tall, then any $\subselmgooR{\J}$-space of reals is a~$\gamma$-set, see~Corollary~11.4 in~\cite{Su20}. However, we prove this is the~only case of such implication being true.
\begin{theorem}[$\pp=\cc$]\label{subselection-nongamma}
If $\J$ is \tall, then there is an~$\subselmgooR{\J}$-space $A\subseteq\PP(\omega)$ that is not a~$\gamma$-set. 
\end{theorem}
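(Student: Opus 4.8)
The plan is to construct, using $\pp=\cc$, a $\J$-evading double scale $\set{a_\xi,b_\xi}{\xi<\cc}$ in which the family $\set{a_\alpha}{\alpha<\cc}$ is moreover \tall\ and is contained in a fixed auxiliary tall ideal $\I$ on $\omega$ (such ideals exist, e.g. the density zero ideal). Granting this, Lemma~\ref{representativefamilies} immediately gives that $A=\set{a_\alpha}{\alpha<\cc}\cup\fin$ is an $\subselmgooR{\J}$-space, while $A$ fails to be a $\gamma$-set by Lemma~\ref{nothavingproperties}(1): indeed $\set{a_\alpha}{\alpha<\cc}\subseteq\I$ forces every finite union of members of $A$ to lie in the ideal $\I$, and any member of $\I$ is co-infinite (its complement cannot be finite, else $\omega\in\I$), so $A$ has \fup, and $A$ is \tall. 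Before the recursion I fix an enumeration $\set{c_\alpha}{\alpha<\cc}$ of $\infin$ and an enumeration $\set{\V_\alpha}{\alpha<\cc}$ of all countable families of open subsets of $\PP(\omega)$, of which there are exactly $\cc$ since $\cc^\omega=\cc$.

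At stage $\alpha$, with $\set{a_\xi,b_\xi}{\xi<\alpha}$ already chosen and $A_\alpha:=\set{a_\xi}{\xi<\alpha}\cup\fin$ fixed, I first define $b_\alpha$. If $\V_\alpha$ is an open $\omega$-cover of $A_\alpha$, then, since $|\V_\alpha|\leq\omega<\pp$, Lemma~\ref{GMlemma}(2) (with $E=\omega$) yields $b_\alpha\in\infin$ and an injective $\gamma$-cover $\set{V_j}{j\in\omega}\subseteq\V_\alpha$ of $A_\alpha$ with $V_j\supseteq\set{a\subseteq\omega}{[b_\alpha(j),b_\alpha(j+1))\cap a=\emptyset}$ for every $j$; otherwise let $b_\alpha$ be any element of $\infin$. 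I then define $a_\alpha$ so that $a_\alpha\in[c_\alpha]^\omega\cap\I$ and $\set{j}{[b_\beta(j),b_\beta(j+1))\cap a_\alpha\neq\emptyset}\in\J$ for all $\beta\leq\alpha$. The single-interval step reads: given $b,c\in\infin$, the set $J_0=\set{j}{[b(j),b(j+1))\cap c\neq\emptyset}$ is infinite because $c$ is infinite while the intervals are finite; tallness of $\J$ provides an infinite $J'\subseteq J_0$ with $J'\in\J$, and then $c'=c\cap\bigcup_{j\in J'}[b(j),b(j+1))$ is infinite, so tallness of $\I$ yields an infinite $a\in\I$ with $a\subseteq c'\subseteq c$ and $\set{j}{[b(j),b(j+1))\cap a\neq\emptyset}\subseteq J'\in\J$. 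Iterating this along a $\subseteq^\ast$-decreasing sequence $\gseqn{a_{\alpha,\beta}}{\beta\leq\alpha}$ inside $c_\alpha$ — applying the single-interval step against $b_\beta$ at each stage, to the immediate predecessor at successors and to a pseudointersection of the earlier terms at limits (these exist since the sequences have length $<\cc=\pp$) — produces the desired $a_\alpha:=a_{\alpha,\alpha}$; passing to a $\subseteq^\ast$-smaller set only enlarges each index set $\set{j}{[b_\beta(j),b_\beta(j+1))\cap a\neq\emptyset}$ by finitely many elements, so membership in $\J$ is inherited all the way down the tower.

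Finally I verify the two clauses of a $\J$-evading double scale. The evasion clause is exactly what each $a_\alpha$ was built to satisfy. For coding representative covers, let $\V$ be any open $\omega$-cover of $A$. As $A$ is a set of reals, it is an $\varepsilon$-space by Theorem~\ref{epsilon-char}, so $\V$ has a countable $\omega$-subcover $\V'$; write $\V'=\V_{\alpha_0}$. Since $\V'$ is an $\omega$-cover of $A\supseteq A_{\alpha_0}$ and no member of $\V'$ (a nonempty open set, of size $\cc$) can equal the set $A_{\alpha_0}$ of size $<\cc$, the family $\V'$ is an $\omega$-cover of $A_{\alpha_0}$; hence at stage $\alpha_0$ the first branch was taken and, with $\eta:=\alpha_0$, it supplied an injective $\gamma$-cover $\set{V_j}{j\in\omega}\subseteq\V'\subseteq\V$ of $\set{a_\xi}{\xi<\eta}\cup\fin$ with $V_j\supseteq\set{a\subseteq\omega}{[b_\eta(j),b_\eta(j+1))\cap a=\emptyset}$ — precisely conditions (1) and (2) in the definition. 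Thus $\set{a_\xi,b_\xi}{\xi<\cc}$ is a $\J$-evading double scale, and the theorem follows.

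The main obstacle is reconciling the definition of coding representative covers, which quantifies over $\omega$-covers of the final set $A$ that is unknown during the recursion, with a stage-by-stage construction that can only inspect the initial segments $A_\alpha$. This is resolved by the two reductions used above: passing to a countable $\omega$-subcover via the $\varepsilon$-space property of sets of reals, and the observation that such a countable $\omega$-cover of $A$ is automatically an $\omega$-cover of each $A_\alpha$, so it is handled correctly at the unique stage where the fixed enumeration $\set{\V_\alpha}{\alpha<\cc}$ lists it. The only quantitative ingredient is the single-interval step, where tallness of $\J$ is exactly what lets $a_\alpha$ intersect only a $\J$-small family of $b_\beta$-intervals, while tallness of the auxiliary ideal $\I$ simultaneously secures both \fup\ and tallness of $A$.
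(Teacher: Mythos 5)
Your proposal is correct and follows essentially the same route as the paper: fix a tall auxiliary ideal $\I$, build $\set{a_\alpha}{\alpha<\cc}\subseteq\I$ tall together with $\set{b_\alpha}{\alpha<\cc}$ coding representative covers via Lemma~\ref{GMlemma}(2), obtain each $a_\alpha$ from a $\subseteq^\ast$-decreasing tower driven by the same single-interval claim plus pseudointersections under $\pp=\cc$, and conclude by Lemmas~\ref{representativefamilies} and~\ref{nothavingproperties}(1). The only difference is that you spell out the reduction of arbitrary open $\omega$-covers of $A$ to the enumerated countable ones via the $\varepsilon$-space property, a detail the paper leaves implicit.
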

\begin{proof}
We fix an~enumeration $\set{c_\alpha}{\alpha<\cc}$ of all infinite subsets of~$\omega$, and a~\tall\ ideal~$\I$. By transfinite induction, we shall construct  families $\set{a_\alpha}{\alpha<\cc}, \set{b_\alpha}{\alpha<\cc}\subseteq\infin$ such that $\set{b_\xi}{\xi<\cc}$ codes representative covers of~$\set{a_\xi}{\xi<\cc}$, and conditions (1) - (3) are satisfied:
 \begin{enumerate}[(1)]
    \item $a_\alpha\subseteq c_\alpha$,
    \item $a_\alpha\in\I$,
    \item $\set{j\in\omega}{[b_\beta(j),b_\beta(j+1))\cap a_\alpha\neq\emptyset}\in\J$ for any $\beta\leq\alpha$.
\end{enumerate}
Then $A=\set{a_\alpha}{\alpha<\cc}\cup\fin$ is an~$\subselmgooR{\J}$-space but not a~$\gamma$-set. The~first fact follows by~Lemma~\ref{representativefamilies}, since the~family $\set{b_\xi}{\xi<\cc}$ codes representative covers of~$\set{a_\xi}{\xi<\cc}$ and condition~(3) holds. $A$ is \tall\ and has \fup, by conditions~(1), (2), respectively. Thus $A$ is not a~$\gamma$-set by Lemma~\ref{nothavinggamma}. 
\par
To proceed with construction, we fix an~enumeration $\set{\V_\alpha}{\alpha<\cc}$ of all countable open $\omega$-covers of~$\fin$ (elements of~$\V_\alpha$ are open sets in~$\PP(\omega)$). Once we have $\set{a_\beta}{\beta<\alpha}$, $\set{b_\beta}{\beta<\alpha}$ constructed, we consider~$\V_\alpha$. If~$\V_\alpha$ is not an~$\omega$-cover of $\set{a_\beta}{\beta<\alpha}\cup\fin$, then $b_\alpha$ can be arbitrary, e.g., we set $b_\alpha=\omega$. Otherwise, $\V_\alpha$ is an~$\omega$-cover of $\set{a_\beta}{\beta<\alpha}\cup\fin$, and we pick $b_\alpha\in\infin$ such as in Lemma~\ref{GMlemma}(2). \par
To pick~$a_\alpha\in\infin$ such that conditions (1) - (3) are satisfied, it is enough to construct $\subseteq^\ast$-decreasing sequence $\gseqn{a_{\alpha,\beta}}{\beta\leq\alpha}$ such that $c_\alpha\supseteq a_{\alpha,\beta}$, $a_{\alpha,\beta}\in\I$, and 
\[
\set{j\in\omega}{[b_\beta(j),b_\beta(j+1))\cap a_{\alpha,\beta}\neq\emptyset}\in\J.
\]
Indeed, then we just pick a~pseudointersection $a_\alpha\subseteq c_\alpha$ of the~sequence $\gseqn{a_{\alpha,\beta}}{\beta\leq\alpha}$. To construct the~sequence $\gseqn{a_{\alpha,\beta}}{\beta\leq\alpha}$, let us consider $\beta\leq\alpha$. For $\beta=\xi+1$ it is enough to apply the~claim below to~$b_\beta$ and~$a_\xi$. If $\beta$ is limit, then we select a~pseudointersection~$d\subseteq c_\alpha$ of $\gseqn{a_{\alpha,\xi}}{\xi\leq\beta}$, and afterwards we apply the~claim below to~$b_\beta$ and~$d$.
\begin{clm*}
Let~$b,c\in\infin$. There is $a\in[c]^\omega\cap\I$ such that  
\[
\set{j\in\omega}{[b(j),b(j+1))\cap a\neq\emptyset}\in\J.    
\]
\end{clm*}
\begin{proof}
Let $N=\set{j\in\omega}{[b(j),b(j+1))\cap c\neq\emptyset}\in\infin$. There is $M\in[N]^\omega$ with $M\in\J$. Hence, we pick an~infinite subset~$a$ of $c\cap\bigcup_{n\in M}[b(j),b(j+1))$ such that $a\in\I$. 
\end{proof}
\end{proof}
\par
Corollary~\ref{subprinciple_katetov-delta} states that in the~realm of $\varepsilon$-spaces, any~$\subselmgR{\fin^\alpha}$-space possesses the~property~$\delta$, and so  the~property~$\pi$ as well. Moreover, we have shown in~Lemma~\ref{subsequenceimpliespi} that in the~realm of $\varepsilon$-spaces, if $\J$ is not $\omega$-hitting, then any~$\subselmgkbR{\J}$-space has the~property~$\pi$. Let us recall that the~nowhere dense ideal~$\nwd$ is the~ideal on the~set of rational numbers~$\Q$ whose elements are the~nowhere dense subsets of~$\Q$.  
\begin{prop}\label{subsequenceimpliespidelta}
Let $X$ be an~$\varepsilon$-space.  
\begin{enumerate}[(1)]
    \item If $\J\leqk\fin^\alpha$, then any~$\subselmgR{\J}$-space has property $\delta$. 
    \item If $\J\leqk\nwd$, then any~$\subselmgR{\J}$-space has property~$\pi$.
\end{enumerate}
\end{prop}
\begin{proof}
(1) A~direct consequence of Theorem~\ref{subprinciple-deltalevels}, since any~$\subselmgR{\J}$-space with $\J\leqk\fin^\alpha$ is an~$\subselmgR{\fin^\alpha}$-space.
\par
(2) Similarly to part~(1), it is enough to show that the~statement holds for $\J=\nwd$. Let $\set{V_k}{k\in\omega}$ be an~$\omega$-cover. By the~assumption, there is a~function $\varphi\colon\Q\to\omega$ such that $\langle V_{\varphi(q)}\colon q\in\Q\rangle\in\iGamma{\nwd}$. We may assume that $\varphi$ is $\nwd$-to-one, then $\set{\varphi(q)}{q\in B}$ is infinite for any open~$B\subseteq\Q$. We fix a~countable basis~$\B$ of topology~$\Q$. We shall show that each finite set $F\subseteq X$ is contained in $V_{\varphi(q)}$ for all $q\in B$, for some $B\in\B$. Indeed, let $F\subseteq X$ be finite. Since $\set{q\in\Q}{F\not\subseteq V_{\varphi(q)}}\in\nwd$ there is $B\in\B$ such that $\set{q\in\Q}{F\not\subseteq V_{\varphi(q)}}\cap B=\emptyset$. Hence, $F\subseteq V_{\varphi(q)}$ for all $q\in B$.
\end{proof}
\par
It could be tempting to conjecture that any~$\subselmgkbR{\J}$-space has the~property~$\pi$. However, as we shall see in Theorem~\ref{subsequence-nonpi}, it is not true. Let us recall that some standard Borel ideals on~$\omega$ studied in~\cite{BrFl17,BrFaVe,Hr,Hr17}, namely $\EDfin$, $\summable$, $\Z$, are all $\omega$-hitting. 
\begin{lema}\label{nothavingproperties}
Let $A\subseteq\PP(\omega)$ possess \fup. If $A$ is $\omega$-hitting, then $A$ is not a~$\pi$-set.
\end{lema}
\begin{proof}
Let us assume that $A$ has \fup, and $\bigcup A=\omega$ (otherwise indeces run on~$\bigcup A$). Then $\V=\set{V_n}{n\in\omega}$ for $V_n=\set{a\in A}{n\not\in a}$ is an~$\omega$-cover of~$A$,
since no $V_n$ contains entire $A$.
\par
To get the~contradiction, let us assume that $A$ is a~$\pi$-set. Thus there is a~sequence $\seqn{\V_k}{k}$ of infinite subsets of~$\V$ such that for any finite set~$F\subseteq A$ there is $m\in\omega$ with $F\subseteq V$ for all but finitely many $V\in\V_m$. Taking $c_k=\set{n\in\omega}{V_n\in\V_k}$ we obtain a~sequence~$\seqn{c_k}{k}$ of infinite subsets of~$\omega$ such that for any finite set~$F\subseteq A$ there is $m\in\omega$ with $(\bigcup F)\cap c_m$ finite. Let $a\in A$ be such that $|a\cap c_k|=\omega$ for each~$k\in\omega$. However, for $F=\{a\}$ we have $|a\cap c_m|<\omega$ for some~$m$, a~contradiction.
\end{proof}
\par
\begin{theorem}[\CH]\label{subsequence-nonpi}
The~following are equivalent for any ideal~$\I$ on~$\omega$:
\begin{enumerate}[\rm (a)]
    \item $\I$ is not $\omega$-hitting.
    \item Any~$\subselmgooR{\I}$-space with property~$\varepsilon$ has property~$\pi$.
\end{enumerate}
\end{theorem}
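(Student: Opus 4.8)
The plan is to prove the two implications separately, the forward one being a quick reduction and the reverse (in contrapositive form) carrying the bulk of the work.

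For $(a)\Rightarrow(b)$ I would argue directly. By the implications recorded in Section~\ref{S-def_sets}, every $\subselmgooR{\I}$-space is an $\subselmgkbR{\I}$-space. Since $X$ is an $\varepsilon$-space and $\I$ is not $\omega$-hitting by assumption, Proposition~\ref{subsequenceimpliespi}(3) applies and yields that $X$ has property~$\pi$. No further hypotheses (not even \CH) are needed for this direction.

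For $(b)\Rightarrow(a)$ I argue by contraposition: assuming $\I$ is $\omega$-hitting I construct, under \CH, a set of reals $A\subseteq\PP(\omega)$ which is an $\subselmgooR{\I}$-space but not a $\pi$-set; being a set of reals it is automatically an $\varepsilon$-space, so $(b)$ fails. The set $A$ will have the familiar form $A=\set{a_\alpha}{\alpha<\cc}\cup\fin$, built by transfinite recursion of length $\cc$ so that simultaneously: $\set{b_\xi}{\xi<\cc}$ codes representative covers of $\set{a_\xi}{\xi<\cc}$, and $\set{a_\xi}{\xi<\cc}$ $\I$-evades $\set{b_\xi}{\xi<\cc}$ (so that $\set{a_\xi,b_\xi}{\xi<\cc}$ is an $\I$-evading double scale, whence $A$ is an $\subselmgooR{\I}$-space by Lemma~\ref{representativefamilies}); each $a_\alpha\in\I$ (so that $A$ has \fup, as $\I$ is a proper ideal and finite unions of its members have infinite complements); and the family $\set{a_\alpha}{\alpha<\cc}$ is $\omega$-hitting. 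The last two properties give, via Lemma~\ref{nothavingproperties}(2), that $A$ is not a $\pi$-set. To make $\set{a_\alpha}{\alpha<\cc}$ $\omega$-hitting I fix, using \CH, an enumeration $\gseqn{\bar x^\alpha}{\alpha<\cc}$ of all $\omega$-sequences of infinite subsets of $\omega$ and arrange that $a_\alpha$ meets every member of $\bar x^\alpha=\seqn{x^\alpha_n}{n}$ infinitely. The covers are handled exactly as in Theorem~\ref{subselection-nongamma}: fixing an enumeration of all countable open $\omega$-covers of $\fin$, at stage $\alpha$ I first produce $b_\alpha$ from the $\alpha$-th cover (when it is an $\omega$-cover of $\set{a_\beta}{\beta<\alpha}\cup\fin$) by Lemma~\ref{GMlemma}(2), applicable since the cover is countable and $\cc=\pp>\omega$.

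The heart of the construction is the one-step choice of $a_\alpha$, for which I would prove the following claim: if $\I$ is $\omega$-hitting, $\seqn{d_k}{k}$ is a sequence in $\infin$ (here a reindexing of the countable set $\set{b_\beta}{\beta\le\alpha}$, as $\alpha<\omega_1$), and $\seqn{x_n}{n}$ is a sequence of infinite subsets of $\omega$, then there is $a\in\I$ such that $a\cap x_n$ is infinite for every $n$ and $\set{j}{[d_k(j),d_k(j+1))\cap a\neq\emptyset}\in\I$ for every $k$. Applying this at stage $\alpha$ with $\seqn{x_n}{n}=\bar x^\alpha$ produces an $a_\alpha$ with the required membership in $\I$, $\I$-evasion of all $b_\beta$ with $\beta\le\alpha$, and hitting of $\bar x^\alpha$. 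I expect this claim to be the main obstacle, precisely because forward images under the finite-to-one interval maps $\pi_k\colon m\mapsto$ (the unique $j$ with $m\in[d_k(j),d_k(j+1))$) do not preserve membership in $\I$: a set $a\in\I$ hitting all $x_n$ need not satisfy $\pi_k[a]\in\I$.

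I would overcome this in two stages. First, for a single interval map $\pi$ I would show the stronger statement that one may additionally keep $\pi[a]\in\I$: applying the $\omega$-hitting of $\I$ to the index projections $\seqn{\pi[x_n]}{n}$ (each infinite, as $\pi$ is finite-to-one) produces $J\in\I$ meeting every $\pi[x_n]$ infinitely; then $\pi^{-1}[J]$ meets every $x_n$ infinitely and has $\pi$-image contained in $J$, and a final application of the $\omega$-hitting of $\I\restriction\pi^{-1}[J]$ to $\seqn{x_n\cap\pi^{-1}[J]}{n}$ delivers $a\in\I$ with $a\subseteq\pi^{-1}[J]$, hence $\pi[a]\subseteq J\in\I$, while still meeting each $x_n$ infinitely. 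Second, to treat the countably many maps $d_k$ at once I would run a fusion: build $\subseteq$-decreasing sets $a^0\supseteq a^1\supseteq\cdots$, where $a^k$ is obtained from $a^{k-1}$ by the single-map argument applied to $\pi_k$ inside $a^{k-1}$ (so $a^k\in\I$, $\pi_k[a^k]\in\I$, and $a^k\cap x_n$ infinite for all $n$), and reserve at stage $k$ a fresh point of $x_n\cap a^k$ for each $n\le k$. The set $a$ of all reserved points then meets each $x_n$ infinitely, is contained in $a^0\in\I$, and satisfies $a\subseteq^\ast a^k$ for every $k$, whence $\pi_k[a]\in\I$ for every $k$. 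This establishes the one-step claim and, together with the bookkeeping above, completes the construction.
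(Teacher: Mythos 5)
Your proposal is correct and follows essentially the same route as the paper: the forward direction is exactly the appeal to Proposition~\ref{subsequenceimpliespi}(3), and the reverse direction is the same transfinite construction of an $\omega$-hitting family with \fup\ forming an evading double scale, with the one-step claim proved by the same fusion idea (apply $\omega$-hitting of $\I$ to the index projections $\pi_k[x_n]$ to confine the interval-indices to a set in $\I$, then diagonalize over the countably many maps). The only cosmetic difference is that the paper keeps a second $\omega$-hitting ideal $\J$ for the membership condition $a_\alpha\in\J$ (useful later for Corollary~9.3(2)) and realizes the diagonalization as a pseudointersection of a decreasing chain rather than by reserving fresh points, neither of which changes the argument.
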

\begin{proof}
$\text{\rm (a)}\to\text{\rm (b)}$ follows from Lemma~\ref{subsequenceimpliespi}.
\par
$\text{\rm (b)}\to\text{\rm (a)}$ We shall show that if $\I$ is $\omega$-hitting, then there is an~$\subselmgooR{\I}$-space $A\subseteq\PP(\omega)$ that does not have property~$\pi$. Fix an~enumeration $\set{\set{c^\alpha_n}{n\in\omega}}{\alpha<\cc}$ of all countable families of infinite subsets of~$\omega$, and an~$\omega$-hitting ideal~$\J$. By transfinite induction, we shall construct  families $\set{a_\alpha}{\alpha<\cc}, \set{b_\alpha}{\alpha<\cc}\subseteq\infin$ such that $\set{b_\xi}{\xi<\cc}$ codes representative covers of~$\set{a_\xi}{\xi<\cc}$, and that the~conditions (1) - (3) are satisfied:
\begin{enumerate}[(1)]
    \item $|a_\alpha\cap c^\alpha_m|=\omega$ for each~$m\in\omega$,
    \item $a_\alpha\in\J$,
    \item $\set{n\in\omega}{[b_\beta(n),b_\beta(n+1))\cap a_\alpha\neq\emptyset}\in\I$ for each~$\beta\leq\alpha$.
\end{enumerate}
Then $A=\set{a_\alpha}{\alpha<\cc}\cup\fin$ is an~$\subselmgooR{\I}$-space but does not have property~$\pi$. The~first fact follows by~Lemma~\ref{representativefamilies}, since the~family $\set{b_\xi}{\xi<\cc}$ codes representative covers of~$\set{a_\xi}{\xi<\cc}$, and condition~(3) holds. $A$ is $\omega$-hitting and has \fup, by conditions~(1) and~(2), respectively. Thus $A$  does not have property~$\pi$ by Lemma~\ref{nothavingproperties}. 
\par
To proceed with construction, we fix an~enumeration $\set{\V_\alpha}{\alpha<\cc}$ of all countable open $\omega$-covers of~$\fin$ (elements of~$\V_\alpha$ are open sets in~$\PP(\omega)$). Once we have $\set{a_\beta}{\beta<\alpha}$, $\set{b_\beta}{\beta<\alpha}$ constructed, we consider~$\V_\alpha$. If~$\V_\alpha$ is not an~$\omega$-cover of $\set{a_\beta}{\beta<\alpha}\cup\fin$, then $b_\alpha$ and $\set{V_{\alpha,j}}{j\in\omega}$ may be arbitrary, e.g., identity and $\V_\alpha$ itself, respectively. Otherwise, $\V_\alpha$ is an~$\omega$-cover of $\set{a_\beta}{\beta<\alpha}\cup\fin$, and we pick $b_\alpha\in\infin$ from Lemma~\ref{GMlemma}(2). Furthemore, by the~following claim, we may pick~$a_\alpha\in\infin$ such that the~conditions (1) - (3) are satisfied.
\begin{clm*}
Let $B, C\subseteq\infin$, $|B|=|C|=\omega$. Then there is $a\in\J\cap\infin$ such that $|a\cap c|=\omega$ for each~$c\in C$, and 
\begin{center}
$(\forall b\in B)\ \set{n\in\omega}{[b(n),b(n+1))\cap a\neq\emptyset}\in\I$.    
\end{center}
\end{clm*}
\begin{proof}
We fix an~enumeration $\set{b_i}{i\in\omega}=B$ and $\set{c_i}{i\in\omega}=C$. Since~$\J$ is $\omega$-hitting there is an~infinite $a'\in\J$ such that $c_m\cap a'$ is infinite for each~$m$. We set 
\[
u^0_m=\set{n\in\omega}{[b_0(n),b_0(n+1))\cap c_m\cap a'\neq\emptyset}\in\infin.
\] 
Since~$\I$ is $\omega$-hitting there is an~infinite $d_0\in\I$ such that $u^0_m\cap d_0$ is infinite for each~$m$. We may assume that sets $u^0_m\cap d_0$ are mutually disjoint for $m\in\omega$. We pick $k(0,m,n)\in[b_0(n),b_0(n+1))\cap c_m\cap a'$ for each $m\in\omega$ and $n\in u^0_m\cap d_0$, and we set $a_0=\set{k(0,m,n)}{n\in u^0_m\cap d_0, m\in\omega}\in\J$, the~latter inclusion follows from $a_0\subseteq a'$.
\par
We assume that $a_0\supseteq \dots\supseteq a_j$ are constructed and possess the following properties: 
\begin{center}
    $\set{n\in\omega}{[b_i(n),b_i(n+1))\cap a_i\neq\emptyset}\in\I$ and $|a_i\cap c_m|=\omega$ for each~$i\leq j$, $m\in\omega$.
\end{center}
We set $u^{j+1}_m=\set{n\in\omega}{[b_{j+1}(n),b_{j+1}(n+1))\cap c_m\cap a_j\neq\emptyset}\in\infin$. We continue as in the~previous case to obtain an~infinite $d_{j+1}\in\I$ such that $u^{j+1}_m\cap d_{j+1}$ are infinite, mutually disjoint for $m\in\omega$. We pick $k(j+1,m,n)\in[b_{j+1}(n),b_{j+1}(n+1))\cap c_m\cap a_j$ for each $m\in\omega$ and $n\in u^{j+1}_m\cap d_{j+1}$, and we set $a_{j+1}=\set{k(j+1,m,n)}{n\in u^{j+1}_m\cap d_{j+1}, m\in\omega}\in\J$. Finally, having $\seqn{a_i}{i}$ defined, we take a~pseudointersection~$a$ of $\seqn{a_i}{i}$ such that $c_m\cap a$ is infinite for each~$m$. One can check that the~requirements on~$a$ are fulfilled.  
\end{proof}
\end{proof}
\par


\section{Ideal $\gamma$-sets for different ideals}\label{S-gamma_sets}

\par
Although we have already studied ideal $\gamma$-sets in previous sections, the~ideals under considerations were mainly Kat\v etov power ideals and ideal $\G^\ast\times\fin$ (for an~ultrafilter~$\G$). The~aim of the present section is to consider standard Borel ideals on~$\omega$ studied in~\cite{BrFl17,BrFaVe,Hr,Hr17}. The~corresponding diagram of their relations together with the~Kat\v etov power ideals is depicted in Diagram~\ref{diagram:katetovideals}. Here an~arrow means $\leqk$.
\par
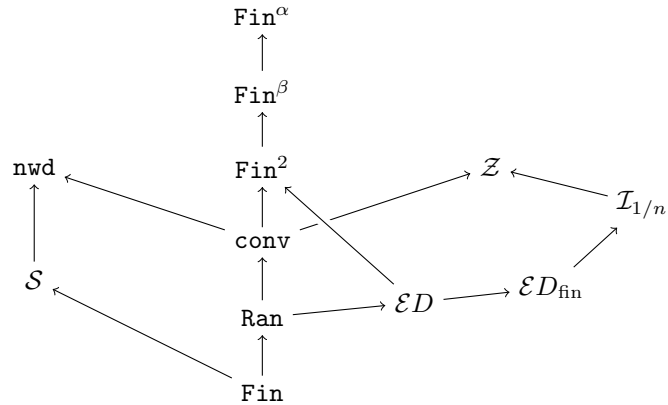
\begin{figure}[H]
\begin{center}
\begin{tikzpicture}[]
\node (g) at (-0.5, -3.5) {$\fin$};
\node (r) at (-0.5, -2.5) {$\Ran$};
\node (conv) at (-0.5, -1.5) {$\conv$};
\node (f2) at (-0.5, -0.5) {$\fin^2$};
\node (fbeta) at (-0.5, 0.5) {$\fin^\beta$};
\node (falpha) at (-0.5, 1.5) {$\fin^\alpha$};
\node (s) at (-3.5, -2) {$\calS$};
\node (nwd) at (-3.5, -0.5) {$\nwd$};
\node (ed) at (1.5, -2.3) {$\ED$};
\node (edfin) at (3.3, -2.1) {$\EDfin$};
\node (sum) at (4.5, -1) {$\summable$};
\node (z) at (2.5, -0.5) {$\Z$};
\foreach \from/\to in {conv/z} \draw [->] (\from) -- (\to);
\foreach \from/\to in {ed/f2} \draw [line width=.15cm,white] (\from) -- (\to);
\foreach \from/\to in {g/r,g/s,r/conv,conv/f2,f2/fbeta,fbeta/falpha,conv/nwd,s/nwd,r/ed,ed/f2,ed/edfin,edfin/sum,sum/z} \draw [->] (\from) -- (\to);
\end{tikzpicture}
\end{center}
\caption{Borel ideals studied in~\cite{BrFl17,BrFaVe,Hr,Hr17}. Here $2\leq\beta<\alpha<\omega_1$.}
\label{diagram:katetovideals}
\end{figure}
\par
By Proposition~11.3 in~\cite{Su20}, if $\I_1\leqk\I_2$, then any~$\subselmgR{\I_1}$-space is an~$\subselmgR{\I_2}$-space as well. Thus, Diagram~\ref{diagram:katetovideals} gives rise to relations among corresponding ideal $\gamma$-sets in Diagram~\ref{diagram:katetov}. It is natural to ask about additional implications between properties from~Diagram~\ref{diagram:katetov}. We dedicate the~rest of the~section to such investigations. In fact, we show that most of the~missing implications cannot be added. The~proof is based on the~following theorem.  
\begin{theorem}[\CH]\label{subseqn-nonsubseqn}
If $\set{\J_\alpha}{\alpha<\omega_1}$ is a~family of ideals such that $\I\not\leqcountk\J_\alpha$ for all $\alpha<\omega_1$, then there exists an~$\subselmgooR{\I}$-space $A\subseteq\PP(\omega)$ that is not an~$\subselmgR{\J_\alpha}$-space for all $\alpha<\omega_1$. 
\end{theorem}
\begin{proof}
We fix a~bijection $\varrho\colon\cc\times\cc\to\cc$, and an~enumeration $\set{f_\alpha}{\alpha<\cc}$ of functions such that for each $\xi<\cc$, the~family $\set{f_{\varrho(\xi,\eta)}}{\eta<\cc}$ is the~family of all $\J_\xi$-to-one functions from~$\baire\omega$. By transfinite induction, see the~last paragraph of the~proof, we shall construct  families $\set{a_\alpha}{\alpha<\cc}, \set{b_\alpha}{\alpha<\cc}\subseteq\infin$ such that $\set{b_\xi}{\xi<\cc}$ codes representative covers of~$\set{a_\xi}{\xi<\cc}$, and that the~conditions (i) - (iii) for $\xi,\eta<\cc$ with $\varrho(\xi,\eta)=\alpha$ are satisfied:
 \begin{enumerate}[(i)]
    \item $f_\alpha^{-1}[a_\alpha]\in\J_\xi^+$,
    \item $a_\alpha\in\I$,
    \item $\set{j\in\omega}{[b_\beta(j),b_\beta(j+1))\cap a_\alpha\neq\emptyset}\in\I$ for any $\beta\leq\alpha$.
\end{enumerate}
Then $A=\set{a_\alpha}{\alpha<\cc}\cup\fin$ is an~$\subselmgooR{\I}$-space. It follows by~Lemma~\ref{representativefamilies}, since the~family $\set{b_\xi}{\xi<\cc}$ codes representative covers of~$\set{a_\xi}{\xi<\cc}$ and condition~(iii) holds.
\par
$A$ is not an~$\subselmgR{\J_\xi}$-space for all $\xi<\cc$. Indeed, let $\xi<\cc$. For any $\J_\xi$-to-one function $f\in\baire\omega$ there is $\eta<\cc$ such that $f=f_{\varrho(\xi,\eta)}$. By condition~(i), we have $f^{-1}[a_{\varrho(\xi,\eta)}]\in\J_\xi^+$. Thus $A\not\leqk\J_\xi$. Moreover, $A$ has \fup\ by condition~(ii). By Lemma~3.2(2) in~\cite{Su20}, no $A\not\leqk\J_\xi$ with \fup\ is an~$\subselmgR{\J_\xi}$-space.
\par
To proceed with construction, we fix an~enumeration $\set{\V_\alpha}{\alpha<\cc}$ of all countable open $\omega$-covers of~$\fin$ (elements of~$\V_\alpha$ are open sets in~$\PP(\omega)$). Once we have $\set{a_\beta}{\beta<\alpha}$, $\set{b_\beta}{\beta<\alpha}$ constructed, we consider~$\V_\alpha$. If~$\V_\alpha$ is not an~$\omega$-cover of $\set{a_\beta}{\beta<\alpha}\cup\fin$, then $b_\alpha$ may be arbitrary, e.g., $b_\alpha=\omega$. Otherwise, $\V_\alpha$ is an~$\omega$-cover of $\set{a_\beta}{\beta<\alpha}\cup\fin$, and we pick $b_\alpha\in\infin$ from Lemma~\ref{GMlemma}(2). Furthemore, by Lemma~\ref{gfinnotlessi}(2), we may pick~$a_\alpha\in\infin$ (considering $B=\set{b_\beta}{\beta\leq\alpha}\cup\{\omega\}$) such that the~conditions (i) - (iii) are satisfied, respectively.
\end{proof}
\par
Consequently, by Proposition~\ref{onestepsubseqnonsubseq} we have 
\begin{corol}[\CH]\label{subsequence-nonsubsequence_Pplus}
If $\J$ is a~$\rmP^+$-ideal and $\I\not\leqrestk\J$, then there is an~$\subselmgooR{\I}$-space $A\subseteq\PP(\omega)$ that is not an~$\subselmgR{\J}$-space. 
\end{corol}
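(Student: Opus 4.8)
The plan is to deduce this directly from Theorem~\ref{subseqn-nond}, using Proposition~\ref{onestepsubseqnonsubseq}(1) to translate the hypothesis into the form required there. The single point that needs checking is that the hypothesis $\I\not\leqrestk\J$ can be upgraded to $\I\not\leqcountk\J$.

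First I would invoke the contrapositive of Proposition~\ref{onestepsubseqnonsubseq}(1). That proposition asserts that for a~$\rmP^+$-ideal~$\J$, the~relation $\I\leqcountk\J$ implies $\I\leqrestk\J$. Since $\J$ is assumed to be a~$\rmP^+$-ideal and $\I\not\leqrestk\J$ by hypothesis, the~contrapositive immediately yields $\I\not\leqcountk\J$.

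Next I would apply Theorem~\ref{subseqn-nond} to the~constant family $\set{\J_\alpha}{\alpha<\omega_1}$ defined by $\J_\alpha=\J$ for every $\alpha<\omega_1$. Each $\J_\alpha$ is Borel because $\J$ is, and by the~previous step $\I\not\leqcountk\J_\alpha$ holds for all $\alpha<\omega_1$. Thus, under~$\CH$, Theorem~\ref{subseqn-nond} furnishes an~$\subselmgooR{\I}$-space $A\subseteq\PP(\omega)$ which is not an~$\subselmgR{\J_\alpha}$-space for any $\alpha<\omega_1$; as $\J_\alpha=\J$ throughout, this $A$ is precisely an~$\subselmgooR{\I}$-space that is not an~$\subselmgR{\J}$-space, as required.

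There is no serious obstacle here: the~corollary is a~repackaging of Theorem~\ref{subseqn-nond}, and the~only genuine mathematical content---that the~combinatorial relation $\not\leqrestk$ strengthens to $\not\leqcountk$ for $\rmP^+$-ideals---is exactly Proposition~\ref{onestepsubseqnonsubseq}(1). The~one thing worth a~moment's care is that instantiating the~``family of $\omega_1$ Borel ideals'' hypothesis of Theorem~\ref{subseqn-nond} by a~constant repetition of the~single Borel ideal~$\J$ is legitimate, which it plainly is.
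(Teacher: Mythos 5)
Your proof is correct and follows exactly the paper's own route: the paper derives this corollary by applying Proposition~\ref{onestepsubseqnonsubseq}(1) (in contrapositive, so that $\I\not\leqrestk\J$ upgrades to $\I\not\leqcountk\J$ for the $\rmP^+$-ideal $\J$) and then invoking Theorem~\ref{subseqn-nond}. Instantiating the $\omega_1$-indexed family there by the constant family $\J_\alpha=\J$ is unproblematic, just as you note.
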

\par
In the~wave of Theorem~\ref{subseqn-nonsubseqn}, we get interested in ideals~$\I,\J$ such that $\I\not\leqcountk\J$. Focusing on Diagram~\ref{diagram:katetovideals}, it is known that with one possible exception (it is not known whether $\Ran\leqk\calS$, see~\cite{BrFl17,BrFaVe,Hr,Hr17}), each missing arrow from~$\I$ to~$\J$ means that $\I\not\leqk\J$ (under the~assumption that we added all arrows arising from transitivity). Although $\finfin\leqrestk\conv$ and $\finfin\not\leqk\conv$, in many other cases in Diagram~\ref{diagram:katetovideals} we have that if $\I\not\leqk\J$, then $\I\not\leqrestk\J$. Thus the~answer to the~following question about ideals~$\I,\J$ becomes important:
\begin{center}
If $\I\leqcountk\J$, is it necessary true that $\I\leqrestk\J$ (or even $\I\leqk\J$) as well?    
\end{center}
\par
In Proposition~\ref{onestepsubseqnonsubseq}, we have already answered the~question positively for some classes of ideals. We add an~additional class of ideals. Let us recall that the~ideal~$\conv$ is the~ideal on the~set of rational numbers~$\Q\cap[0,1]$ induced by

sequences convergent in~$[0,1]$.
\begin{lema}\label{omegaisplus_nwd}
If $\I\leqcountk\conv$, then $\I\leqrestk\conv$. 
\end{lema}
\begin{proof}
Suppose $\I\not\leqrestk\conv$ (in particular, $\I$ is tall), we shall show that $\I\not\leqcountk\conv$. Therefore, pick arbitrary $f\in\baire\omega$ and $B=\set{b_n}{n\in\omega}$. By Lemma~\ref{gfinnotlessi}(1), there is $a_0\in[\omega]^\omega$ such that $f^{-1}[a_0]\not\in\conv$, and 
\[
\bi_{b_0}[a_0]=\set{j\in\omega}{[b_0(j),b_0(j+1))\cap a_0\neq\emptyset}\in\I.
\]
$f^{-1}[a_0]\not\in\conv$, so it has infinitely many accumulation points. Let us pick a~bijective enumeration $\seqn{r_i}{i}$ of all accumulation points of~$f^{-1}[a_0]$, and $A_{0,0}\in[f^{-1}[a_0]]^\omega$ that converges to~$r_0$. Note that $f[A_{0,0}]\subseteq a_0$, and so $\bi_{b_0}\big[f[A_{0,0}]\big]\in\I$. 
\par
By Lemma~\ref{gfinnotlessi}(1), there is $a_1\in[a_0]^\omega$ such that $f^{-1}[a_1]\not\in\conv$ and $\bi_{b_1}[a_1]\in\I$. Let us pick $A_{1,1}\in[f^{-1}[a_1]]^\omega$ that converges to~$r_{i_1}$ with $i_1=\min\set{i}{r_i\in\overline{f^{-1}[a_1]}, i>0}$. Note that $f[A_{1,1}]\subseteq a_1$, and so $\bi_{b_1}\big[f[A_{1,1}]\big]\in\I$. 
\par
We assume that we have a~decreasing sequence $a_0,\dots, a_k$ of infinite sets such that $f^{-1}[a_i]\not\in\conv$ and $\bi_{b_i}[a_i]\in\I$. Moreover, we assume we have $\gseqn{A_{n,n}}{n\leq k}$, and increasing $\gseqn{i_n}{n\leq k}$ such that $A_{n,n}\in[f^{-1}[a_n]]^\omega$ is converging to~$r_{i_n}$ and $\bi_{b_n}\big[f[A_{n,n}]\big]\in\I$.
\par
By Lemma~\ref{gfinnotlessi}(1), there is $a_{k+1}\in[a_k]^\omega$ such that $f^{-1}[a_{k+1}]\not\in\conv$ and $\bi_{b_{k+1}}[a_{k+1}]\in\I$. Let us pick $A_{k+1,k+1}\in[f^{-1}[a_{k+1}]]^\omega$ that converges to~$r_{i_{k+1}}$ with $i_{k+1}=\min\set{i}{r_i\in\overline{f^{-1}[a_{k+1}]}, i>i_k}$. Note that $f[A_{k+1,k+1}]\subseteq a_{k+1}$, and so $\bi_{b_{k+1}}\big[f[A_{{k+1},k+1}]\big]\in\I$. 
\par
Taking into account that the ideal $\I$ is $\tall$ (note that $\J\leqrestk\conv$ for $\J$ being not tall), one can construct a~decreasing family $\set{A_{n,m}}{m\in\omega}$ of infinite sets with the~largest element $A_{n,n}$ such that $\bi_{b_m}\big[f[A_{n,m}]\big]\in\I$. We take a~pseudointersection $A_n$ of $\seqn{A_{n,m}}{m}$ below $A_{n,n}$, and set $A=\bigcup_{n\in\omega}A_n$. One can see that $A\not\in\conv$. Moreover, $f[A]\subseteq a_0$. Finally,
\[
\bi_{b_k}\big[f[A]\big]\subseteq\bi_{b_k}[a_k]\cup\bigcup\limits_{i<k}\bi_{b_k}\big[f[A_i]\big]\in\I.
\]
Thus $f$ and $B$ do not witness $\I\leqcountk\conv$, as exemplified by~$f[A]$. Since $f$ and $B$ were chosen arbitrarily, we have $\I\not\leqcountk\conv$.
\end{proof}
\par
It is well-known that ideals $\fin$, $\summable$, $\ED$, $\EDfin$, $\Ran$, $\calS$ are all $\Fsigma$ ideals. Moreover, by the~result attributed to~\cite{JustKra} in~\cite{Hr}, any $\Fsigma$ ideal is a~$\rmP^+$-ideal, i.e., any decreasing sequence $\seqn{I_n}{n}$ of $\I$-positive sets has an~$\I$-positive pseudointersection. 
\begin{lema}\label{katetov-rest}
\begin{enumerate}[(1)]
    \item Let $0<\alpha<\omega_1$. If $\I\leqcountk\fin^\alpha$, then $\I\leqk\fin^\alpha$. In particular, $\fin^\alpha\not\leqcountk\fin^\beta$ for any $0<\beta<\alpha$.
    \item $\finfin\not\leqcountk\J$ for any $\rmP^+$-ideal~$\J$. In particular, $\finfin\not\leqcountk\fin, \summable, \ED, \EDfin, \Ran, \calS$.
    \item $\nwd\not\leqcountk\J$ for any $\Fsigma$ ideal~$\J$. In particular, $\nwd\not\leqcountk\fin, \summable, \ED, \EDfin, \Ran, \calS$.
    \item $\I\not\leqcountk\conv$ for any $\I\not\leqk\finfin$. In particular, $\nwd, \calS, \Z, \summable, \EDfin\not\leqcountk\conv$.
    \item $\I\not\leqcountk\J$  for any $\Fsigma$ ideal~$\J$ such that $\conv\leqk\I$. In particular, 
    \begin{center}
        $\conv, \fin^\alpha\not\leqcountk\calS, \Ran, \ED, \EDfin, \summable$ ($\alpha>1$).
    \end{center}
    \item $\fin^\alpha_M\not\leqcountk\fin^\beta_M$ for $0<\beta<\alpha<\omega_1$. $\summable\not\leqcountk\EDfin$.        
\end{enumerate}
\end{lema}
\begin{proof}
In cases (2) - (6), we prove the~statement just for the~relation~$\leqrestk$. The~corresponding statement for the~relation~$\leqcountk$ then follows by Proposition~\ref{onestepsubseqnonsubseq} and Lemma~\ref{omegaisplus_nwd}.
\par
(1) If $\I\not\leqk\fin$, then $\I\not\leqcountk\fin$ by Proposition~\ref{onestepsubseqnonsubseq}(1). Moreover, it follows that $\I\not\leqcountk\fin\restriction Y$ for any infinite~$Y\subseteq\omega$. We continue by transfinite induction.
\par
Suppose that $\I\not\leqk\fin^\alpha$ for some $\alpha\geq2$. Let $M\subseteq\baire\omega_1$ contains cofinal sequences. Since $\fin^\alpha$ is Kat\v etov equivalent to~$\fin^\alpha_M$, we conclude that $\I\not\leqk\fin^\alpha_M$, and hence $\I\not\leqrestk\fin^\alpha_M$ by Proposition~\ref{finbetaleqfinalpha}(4). By recursive assumption, we know that $\I\not\leqcountk\fin^\beta_M$ for all $\beta<\alpha$ (beacause $\I\not\leqk\fin^\beta_M$), and again by Proposition~\ref{finbetaleqfinalpha}(4), we obtain $\I\not\leqcountk\fin^\beta_M\restriction Y$ for $Y\not\in\fin^\beta_M$. Now a~direct application of Proposition~\ref{onestepsubseqnonsubseq}(2) gives $\I\not\leqcountk\fin^\alpha_M$, and hence also $\I\not\leqcountk\fin^\alpha$.
\par
The~particular case follows by $\fin^\alpha\not\leqk\fin^\beta$, see Proposition~\ref{finbetaleqfinalpha}. 
\par
(2) Proposition~5.11 in~\cite{Hr}.
\par
(3) Note that $\J\restriction J$ is an~$\Fsigma$ ideal for each $J\in\J^+$. However, $\nwd\not\leqk\K$ for any $\Fsigma$ ideal~$\K$. Indeed, by~\cite[Corollary~2.4]{HrMeMi10}, it is consistent that $\covh{\K}>\dd$. On the~other hand, $\covh{\nwd}=\covm\leq\dd$. Thus if $\nwd\leqk\K$ was true, then it would be true in the~above mentioned model by the~absolutness of Kat\v etov order, and we obtain $\covh{\nwd}\geq\covh{\K}$, a~contradiction.
\par
(4) Each $J\not\in\conv$ contains $\conv$-positive subset $J'\subseteq J$ such that $\conv\restriction J'$ is isomorphic to $\finfin$.
\par
(5) No $\Fsigma$ ideal is Kat\v etov above $\conv$, see~\cite{Hr17}. Considering $J\in\J^+$, one can check that $\J\restriction J$ is an~$\Fsigma$ ideal. Since $\conv\leqk\I$, we have $\I\not\leqk\J\restriction J$.
\par
(6) By Proposition~\ref{finbetaleqfinalpha}, $\fin^\beta_M$ is K-equivalent to $\fin^\beta_M\restriction Y$ for each $Y\not\in\fin^\beta_M$, and $\fin^\alpha_M\not\leqk\fin^\beta_M$.
\par
To show that $\summable\not\leqrestk\EDfin$, note that~$\summable\not\leqk\EDfin$, see~\cite{Hr17}. If $E\in\EDfin^+$, then there is an~infinite set~$I\subseteq\omega$ such that the~number of elements of $(I(j)\times\omega)\cap E$ is increasing with~$j$. Setting $E'=(I\times\omega)\cap\Delta\in\EDfin^+$ for $\Delta=\set{(n,m)}{n\leq m}$, we obtain $\EDfin\restriction E\leqk\EDfin\restriction E'\leqk\EDfin$.
\end{proof}
\par
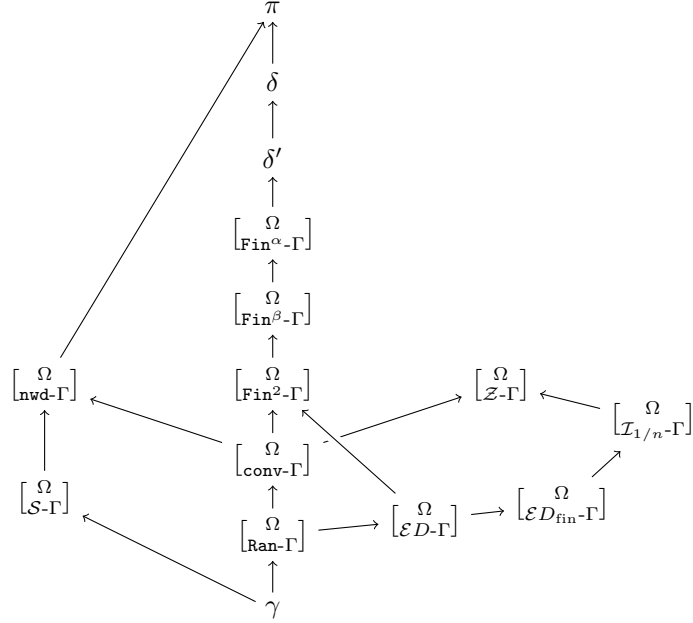
\begin{figure}[H]
\begin{center}
\begin{tikzpicture}[]
\node (g) at (-0.5, -3.5) {$\gamma$};
\node (r) at (-0.5, -2.5) {$\subselmg{\Ran}$};
\node (conv) at (-0.5, -1.5) {$\subselmg{\conv}$};
\node (f2) at (-0.5, -0.5) {$\subselmg{\fin^2}$};
\node (fbeta) at (-0.5, 0.5) {$\subselmg{\fin^\beta}$};
\node (falpha) at (-0.5, 1.5) {$\subselmg{\fin^\alpha}$};
\node (deltap) at (-0.5, 2.5) {$\delta'$};
\node (delta) at (-0.5, 3.5) {$\delta$};
\node (pi) at (-0.5, 4.5) {$\pi$};
\node (s) at (-3.5, -2) {$\subselmg{\calS}$};
\node (nwd) at (-3.5, -0.5) {$\subselmg{\nwd}$};
\node (ed) at (1.5, -2.3) {$\subselmg{\ED}$};
\node (edfin) at (3.3, -2.1) {$\subselmg{\EDfin}$};
\node (sum) at (4.5, -1) {$\subselmg{\summable}$};
\node (z) at (2.5, -0.5) {$\subselmg{\Z}$};
\foreach \from/\to in {conv/z} \draw [->] (\from) -- (\to);
\foreach \from/\to in {ed/f2} \draw [line width=.15cm,white] (\from) -- (\to);
\foreach \from/\to in {g/r,g/s,r/conv,conv/f2,f2/fbeta,fbeta/falpha,falpha/deltap,deltap/delta,delta/pi,conv/nwd,s/nwd,nwd/pi,r/ed,ed/f2,ed/edfin,edfin/sum,sum/z} \draw [->] (\from) -- (\to);
\end{tikzpicture}
\end{center}
\caption{Known relations among investigated notions for reals. Here $2\leq\beta<\alpha<\omega_1$.}
\label{diagram:katetov}
\end{figure}
\par
We say that a~topological space~$X$ has property~$\delta'$, if there is $\alpha<\omega_1$ such that for any open $\omega$-cover~$\V$ we have $X\in\rmL^\alpha(\V)$. Thus if $X$ has property~$\delta'$, then $X$ has property~$\delta$, and we do not know whether the~reversed implication does hold.
\begin{corol}[\CH]\label{finalpha-nonfinbeta}
The~statement 
\begin{center}
    ``There is an~$\subselmgooR{\I}$-space $A\subseteq\PP(\omega)$ that is not an~$\subselmgR{\J}$-space."
\end{center}
is true for the~following ideals.
\begin{enumerate}[(1)]
    \item $\I\not\leqk\fin^\beta$, $\J=\fin^\beta$, for $0<\beta<\omega_1$. In particular, $\I$ may be $\nwd$, $\calS$, and $\fin^\alpha$ for $\beta<\alpha<\omega_1$.
    \item $\I=\fin^\alpha$, $\J$ a~$\rmP^+$-ideal, for $1<\alpha<\omega_1$. In particular, $\J$ may be $\fin, \summable, \ED, \EDfin, \Ran, \calS$.
    \item $\I=\nwd$, $\J$ an~$\Fsigma$ ideal. In particular, $\J$ may be $\fin, \summable, \ED, \EDfin, \Ran, \calS$.  
    \item $\I\not\leqk\finfin$ and $\J\leqk\conv$. In particular, $\I$ may be $\nwd$, $\calS$, and $\J$ may be $\conv$, $\Ran$. 
    \item $\conv\leqk\I\not\leqk\J$, and $\J$ is $\Fsigma$ ideal. In particular, $\I$ may be $\conv$, $\fin^\alpha$ for $\alpha>1$, and $\J$ may be $\calS$, $\Ran$, $\ED$, $\EDfin$, $\summable$.      
    \item $\I=\summable$, $\J=\EDfin$.
\end{enumerate}
Moreover, there is $A\subseteq\PP(\omega)$ possessing property~$\delta'$ (and so $\delta$ as well) that is not an~$\subselmgR{\fin^\beta}$-space.
\end{corol}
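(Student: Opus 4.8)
The plan is to read the corollary purely as a catalogue of concrete instances of two separation results already available: Theorem~\ref{subseqn-nond}, which from a family $\set{\J_\alpha}{\alpha<\omega_1}$ of \emph{Borel} ideals with $\I\not\leqcountk\J_\alpha$ for every $\alpha$ manufactures an $\subselmgooR{\I}$-set that is not an $\subselmgR{\J_\alpha}$-space for any $\alpha$, and its specialization Corollary~\ref{subsequence-nonsubsequence_Pplus}, which for a \emph{Borel} $\rmP^+$-ideal $\J$ only requires the weaker-looking $\I\not\leqrestk\J$. For each listed pair $(\I,\J)$ I would therefore verify two things and nothing more: that $\J$ (or every member of the relevant family) is Borel, and that the appropriate non-reducibility, $\I\not\leqcountk\J$ or $\I\not\leqrestk\J$, holds. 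No new construction is needed; the existence of $A$ is then immediate.

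The engine supplying the non-reducibilities is Corollary~\ref{goodcouples} together with the transitivity remark that $\I\leqcountk\J$ and $\J\leqk\K$ imply $\I\leqcountk\K$. Corollary~\ref{goodcouples}(1) upgrades $\I\not\leqk\fin^\alpha$ to $\I\not\leqcountk\fin^\alpha$, and Corollary~\ref{goodcouples}(2) lets me reread every $\not\leqrestk$ of Lemma~\ref{katetov-rest}(2)--(6) as $\not\leqcountk$. The contrapositive of the transitivity remark pushes a single non-reducibility $\I\not\leqcountk\conv$ down to $\I\not\leqcountk\J$ for every $\J\leqk\conv$, which is exactly what clause~(4) needs.

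I would then dispatch the clauses in order. For (1), $\fin^\beta$ is Borel and $\I\not\leqk\fin^\beta$ gives $\I\not\leqcountk\fin^\beta$ by Corollary~\ref{goodcouples}(1), so Theorem~\ref{subseqn-nond} applies with the constant family $\J_\alpha=\fin^\beta$; the instances $\I\in\{\nwd,\calS\}$ and $\I=\fin^\alpha$ with $\alpha>\beta$ are covered since all of them satisfy $\I\not\leqk\fin^\beta$ (the last by Proposition~\ref{finbetaleqfinalpha}(6), the first two by the absence of the corresponding arrow in Diagram~\ref{diagram:katetovideals}). For (2), (3), (5) and (6) the relevant $\J$ are $\Fsigma$, hence Borel $\rmP^+$-ideals, and Lemma~\ref{katetov-rest} supplies $\I\not\leqrestk\J$ in each case (parts (2),(5) for clause~(2), part~(3) for clause~(3), part~(5) for clause~(5), part~(6) for clause~(6)), so Corollary~\ref{subsequence-nonsubsequence_Pplus} finishes these at once; here the input to part~(5) of the lemma is $\conv\leqk\fin^\alpha$, valid for $\alpha>1$. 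For (4), Lemma~\ref{katetov-rest}(4) gives $\I\not\leqrestk\conv$ whenever $\I\not\leqk\finfin$, Proposition~\ref{onestepsubseqnonsubseq}(3) turns this into $\I\not\leqcountk\conv$, and the transitivity remark yields $\I\not\leqcountk\J$ for every $\J\leqk\conv$; with $\J$ Borel (e.g.\ $\conv$ or $\Ran$) Theorem~\ref{subseqn-nond} applies.

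For the ``Moreover'' clause I would instantiate (1) with $\I=\fin^{\beta+1}$ and $\J=\fin^\beta$, which is legitimate because $\fin^{\beta+1}\not\leqk\fin^\beta$ by Proposition~\ref{finbetaleqfinalpha}(6); this produces an $\subselmgooR{\fin^{\beta+1}}$-set $A\subseteq\PP(\omega)$ that is not an $\subselmgR{\fin^\beta}$-space. Every $\subselmgooR{\fin^{\beta+1}}$-space is an $\subselmgR{\fin^{\beta+1}}$-space, and since $A$ is a set of reals and hence an $\varepsilon$-space (Theorem~\ref{epsilon-char}), Proposition~\ref{subprinciple-deltalevels} gives $A\in\rmL^{\beta+1}(\V)$ for every open $\omega$-cover $\V$, which is precisely property $\delta'$ with the fixed witness $\beta+1$. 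The main obstacle is not the construction---there is none---but the bookkeeping: matching each named ideal to the correct clause of Lemma~\ref{katetov-rest} or Corollary~\ref{goodcouples}, checking Borelness (and $\rmP^+$-ness) wherever the invoked theorem demands it, and recognizing in the last clause that property $\delta'$ coincides with being an $\subselmgR{\fin^\alpha}$-space for some fixed $\alpha$ via Proposition~\ref{subprinciple-deltalevels}.
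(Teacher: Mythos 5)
Your proposal is correct and follows essentially the same route as the paper, which derives this corollary directly as a combination of Corollary~\ref{goodcouples}, Corollary~\ref{subsequence-nonsubsequence_Pplus}, and Theorem~\ref{subseqn-nond}; your clause-by-clause matching of each pair $(\I,\J)$ to the relevant part of Lemma~\ref{katetov-rest} / Corollary~\ref{goodcouples}, and your treatment of the ``moreover'' clause via Proposition~\ref{subprinciple-deltalevels} with $\I=\fin^{\beta+1}$, is exactly the intended bookkeeping.
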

\begin{proof}
    Each item follows by the~corresponding item in~Lemma~\ref{katetov-rest} in combination with Theorem~\ref{subseqn-nonsubseqn}. We include a~sample proof of item~(5). Indeed, let us take ideals $\I,\J$ such that $\conv\leqk\I\not\leqk\J$, and $\J$ is $\Fsigma$ ideal. By item~(5) in~Lemma~\ref{katetov-rest} we have $\I\not\leqcountk\J$. Finally, by Theorem~\ref{subseqn-nonsubseqn} there exists an~$\subselmgooR{\I}$-space $A\subseteq\PP(\omega)$ that is not an~$\subselmgR{\J}$-space
\end{proof}


\section{Ideal Fr\' echet--Urysohn property}\label{S-frecheturysohn}

According to~\cite{BorFar,Su20}, $\cp{X}$ possesses $\J$-Fr\' echet--Urysohn property, shortly~$\zsubselmgR{\J}$,\footnote{We shall write $\zsubselmg{\J}$ as well. Symbol $\Omega_\zero$ in the~notation stands for the~family of all $A\subseteq\cp{X}\setminus\{\zero\}$ with $\zero\in\overline{A}$, and $\Gamma_\zero$ is the~family of all sequences convergent to~$\zero$. Note that $\cp{X}$ is a~topological group.} if for every $A\subseteq\cp{X}$ and $f\in\overline{A}$ there is a~sequence in~$A$ that $\J$-converges to~$f$, which means that there exists a sequence  $\langle a_n:n\in\omega\rangle$ of elements of $A$ such that
$\{n\in\omega:a_n\not\in O\}\in \J$ for any open neighbourhood~$O$ of~$f$. Note that by Theorem~\ref{epsilon-char}, if $X$ is an~$\varepsilon$-space, then it is enough to consider countable $A\subseteq\cp{X}$.
 We summarize the~relations
 between properties $\zsubselmgR{\J}$
 for various standard critical ideals $\J$ in~Diagram~\ref{diagram:katetovfu}, which is similar to Diagram~\ref{diagram:katetov}. 
\begin{figure}[H]
\begin{center}
\begin{tikzpicture}[]
\node (g) at (-0.5, -3.5) {$\fu$};
\node (r) at (-0.5, -2.5) {$\zsubselmg{\Ran}$};
\node (conv) at (-0.5, -1.5) {$\zsubselmg{\conv}$};
\node (f2) at (-0.5, -0.5) {$\zsubselmg{\fin^2}$};
\node (fbeta) at (-0.5, 0.5) {$\zsubselmg{\fin^\beta}$};
\node (falpha) at (-0.5, 1.5) {$\zsubselmg{\fin^\alpha}$};
\node (delta) at (-0.5, 2.5) {$\partlim$};
\node (pi) at (-0.5, 4.5) {$\pytkeev$};
\node (s) at (-3.5, -2) {$\zsubselmg{\calS}$};
\node (nwd) at (-3.5, -0.5) {$\zsubselmg{\nwd}$};
\node (ed) at (1.5, -2.3) {$\zsubselmg{\ED}$};
\node (edfin) at (3.3, -2.1) {$\zsubselmg{\EDfin}$};
\node (sum) at (4.5, -1) {$\zsubselmg{\summable}$};
\node (z) at (2.5, -0.5) {$\zsubselmg{\Z}$};
\foreach \from/\to in {conv/z} \draw [->] (\from) -- (\to);
\foreach \from/\to in {ed/f2} \draw [line width=.15cm,white] (\from) -- (\to);
\foreach \from/\to in {g/r,g/s,r/conv,conv/f2,f2/fbeta,fbeta/falpha,falpha/delta,delta/pi,conv/nwd,s/nwd,nwd/pi,r/ed,ed/f2,ed/edfin,edfin/sum,sum/z} \draw [->] (\from) -- (\to);
\end{tikzpicture}
\end{center}
\caption{Ideal Fr\' echet--Urysohn property of $\cp{X}$ for $X\subseteq\R$. Ordinal~$\beta$ is less than~$\alpha$.}
\label{diagram:katetovfu}
\end{figure}
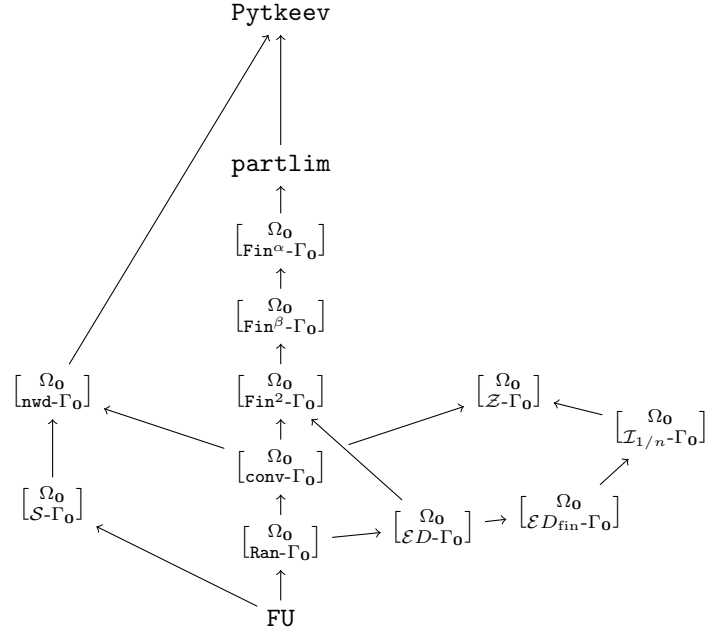
\par
 
Diagrams~\ref{diagram:katetov}, \ref{diagram:katetovfu} raise from Diagram~\ref{diagram:katetovideals} depicting the  Kat\v etov order on these ideals. Here we use the fact that if $\I_1\leqk\I_2$, then any~$\zsubselmgR{\I_1}$-space is an~$\zsubselmgR{\I_2}$-space as well. We recall that
  \cite[Theorem~4.5]{OrTs13} gives a characterization of $\delta$-spaces by a property of $\cp{X}$
  phrased in terms of pointwise convergence
  of partial real-valued functions on $X$. In what follows we shall refer to the latter property of $\cp{X}$ as to $\partlim$. We do not define here this property, which is exactly
   stated in \cite[Theorem~4.5(1)]{OrTs13}, because we do not work with it directly in the paper.
\par
The~main goal of the~present section is to prove that consistently many  arrows 
``missing'' in~Diagram~\ref{diagram:katetovfu} actually cannot be added,
by relating these properties of $\cp{X}$ to the covering properties of $X$ studied earlier in this 
paper. 
If $\cp{X}$ is an~$\zsubselmgR{\J}$-space, then $X$ is an~$\subselmgR{\J}$-space, see~\cite{Su20}. Hence, for sets of reals, we obtain the~following diagram of implications, see Proposition~\ref{subsequenceimpliespi}. 
\begin{figure}[H]
\begin{center}
\begin{tikzpicture}[]
\node (a) at (-5, -3.5) {$\gamma$};
\node (as) at (-5, -1) {$\fu$};
\node (aa) at (-2, -3.5) {$\subselmgR{\fin^\alpha}$};
\node (aas) at (-2, -1) {$\zsubselmgR{\fin^\alpha}$};
\node (c) at (1, -3.5) {$\delta$};
\node (cs) at (1, -1) {$\partlim$};
\node (e) at (4, -3.5) {$\pi$};
\node (es) at (4, -1) {$\pytkeev$};
\node (f) at (-8, -3.5) {$X$};
\node (fs) at (-8, -1) {$\cp{X}$};
\foreach \from/\to in {aas/aa, c/e, a/aa, aa/c, as/aas, aas/cs, cs/es} \draw [->] (\from) -- (\to);
\foreach \from/\to in {as/a, cs/c, es/e} \draw [<->] (\from) -- (\to);
\end{tikzpicture}
\end{center}
\caption{Relations among investigated properties. $\alpha$ is a~countable ordinal.}
\label{diagram:1}
\end{figure}
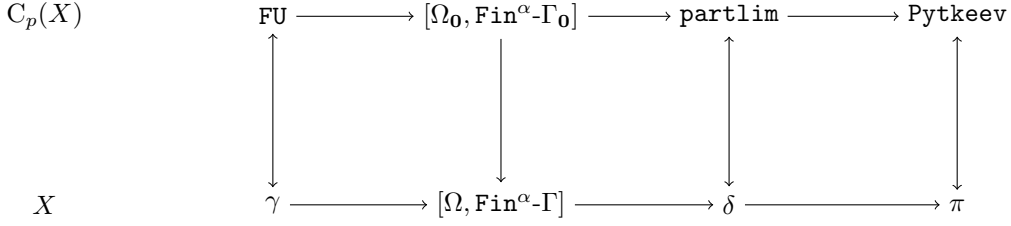
It is clear that if $\J$ is not tall and $X$ is an~$\subselmgR{\J}$-space,
 then $\cp{X}$ is an~$\zsubselmgR{\J}$-space:
 This happens because the former property
 is equivalent to being a $\gamma$-space, and the latter to
being  a Fr\'echet--Urysohn one.
  We show that this implication holds for arbitrary (not necessary tall) ideals in the realm of
   $\sonemm$-spaces. Let us recall that a~topological space~$X$ is an~$\sonemm$-space if for every sequence $\seqn{\V_n}{n}$ of open $\omega$-covers of~$X$ there is $V_n\in\V_n$ such that $\set{V_n}{n\in\omega}$ is an~$\omega$-cover of~$X$, see~\cite{Comb1} and references therein for more information on such spaces.
\begin{prop}\label{subselplusmm}
 If $X$ is an~$\sonemm$- and ~$\subselmgR{\J}$-space,  then $\cp{X}$ is an~$\zsubselmgR{\J}$-space.
\end{prop}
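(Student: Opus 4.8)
The plan is to exploit the group structure of $\cp{X}$ together with the classical Gerlits--Nagy translation between basic neighbourhoods of $\zero$ and open $\omega$-covers of $X$, letting $\sonemm$ supply the missing ``one set per scale'' selection and $\subselmgR{\J}$ then upgrade that selection to a $\J$-$\gamma$-cover.

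First I would reduce the problem. Since $\cp{X}$ is a topological group, translating by $-f$ lets me assume the point whose convergence must be witnessed is $\zero$; thus it suffices to show that every $A\subseteq\cp{X}\setminus\{\zero\}$ with $\zero\in\overline{A}$ contains a sequence $\J$-converging to $\zero$. Applying $\sonemm$ to a constant sequence of covers shows at once that $X$ has property $\varepsilon$ (a countable $\omega$-subcover is extracted from any open $\omega$-cover), so by Theorem~\ref{epsilon-char} the space $\cp{X}$ has countable tightness and I may take $A=\set{a_m}{m\in\omega}$ countable. Writing $s_m=\sup_{x\in X}|a_m(x)|$, if $\inf_m s_m=0$ then a subsequence of $A$ converges to $\zero$ uniformly, hence (as $\J\supseteq\fin$) $\J$-converges to $\zero$ and we are done; so I assume $\inf_m s_m=c>0$.

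Next comes the translation. For each $k$ with $1/k<c$ put $U_m^k=\set{x\in X}{|a_m(x)|<1/k}$ and $\V_k=\set{U_m^k}{m\in\omega}$. Each $\V_k$ is an open $\omega$-cover: properness $U_m^k\neq X$ follows from $s_m\geq c>1/k$, and for finite $F\subseteq X$ the neighbourhood $\set{g}{(\forall x\in F)\,|g(x)|<1/k}$ of $\zero$ meets $A$, giving $F\subseteq U_m^k$ for some $m$. The crucial point is that every member of $\V_k$ carries the scale $k$. Applying $\sonemm$ to the sequence $\seqn{\V_k}{k}$ I obtain $W_k=U_{m_k}^k\in\V_k$ such that $\W=\seqn{W_k}{k}$ is an open $\omega$-cover of $X$; note that the labelling $k\mapsto\mathrm{scale}(W_k)=k$ is the identity. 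Since $X$ is an $\varepsilon$-space, $\subselmgR{\J}$ is equivalent to its countable form (Section~\ref{S-def_sets}): applied to the countable $\omega$-cover $\W$ it yields a $\J$-to-one $\varphi\in\baire\omega$ for which $\seqn{W_{\varphi(n)}}{n}$ is a $\J$-$\gamma$-cover of $X$. I then set $g_n=a_{m_{\varphi(n)}}$.

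Finally I would verify $\J$-convergence. Fix a basic neighbourhood $O=\set{g}{(\forall x\in F)\,|g(x)|<1/K}$ of $\zero$. If $F\subseteq W_{\varphi(n)}$ and $\varphi(n)\geq K$, then for each $x\in F$ one has $|g_n(x)|<1/\varphi(n)\leq 1/K$, so $g_n\in O$; hence
\[
\set{n}{g_n\notin O}\subseteq\bigcup_{x\in F}\set{n}{x\notin W_{\varphi(n)}}\cup\bigcup_{i<K}\varphi^{-1}(i).
\]
The first union is a finite union of sets in $\J$ because $\seqn{W_{\varphi(n)}}{n}$ is a $\J$-$\gamma$-cover, and the second is a finite union of sets in $\J$ because $\varphi$ is $\J$-to-one; thus $\set{n}{g_n\notin O}\in\J$, as required. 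The one genuinely load-bearing idea is that $\sonemm$ makes the scale coincide with the index, so that the $\J$-to-one map delivered by $\subselmgR{\J}$ simultaneously controls the pointwise error (through the $\J$-$\gamma$-cover) and drives the scale to infinity in the $\J$-sense (through $\bigcup_{i<K}\varphi^{-1}(i)\in\J$); the only routine wrinkles are the reduction to $\zero$, the $\varepsilon$-space reduction, and the degenerate uniform-convergence case.
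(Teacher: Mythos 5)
Your proof is correct and follows essentially the same route as the paper's: build the two-parameter family of open sets $\set{x}{|a_m(x)|<\text{scale}}$, use $\sonemm$ to select one function per scale so that index and scale coincide, then let $\subselmgR{\J}$ produce a $\J$-to-one reindexing whose $\J$-$\gamma$-cover property, combined with $\J$-to-one-ness, gives $\J$-convergence to~$\zero$. The only cosmetic difference is how properness of the covers is arranged (your case split on $\inf_m\sup_x|a_m(x)|$ versus the paper's deletion of a point $x_m$ from each $V_{n,m}$), together with your explicit reductions to $\zero$ and to countable $A$, which the paper leaves implicit.
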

\begin{proof}
We assume that we have continuous functions $f_m\colon X\to[0,1]$ such that $\zero\in\overline{\set{f_m}{m\in\omega}}$. Then for an~injective sequence~$\seqn{x_m}{m}$ in~$X$ and a~nonnegative integer~$n$ define $V_{n,m}=\set{x\in X}{f_m(x)<2^{-n}\wedge x\neq x_m}$. One can check that each sequence $\seqn{V_{n,m}}{m}$ is an~$\omega$-cover of~$X$. By the~assumption, there is $\varphi\colon\omega\to\omega$ such that $\seqn{V_{n,\varphi(n)}}{n}$ is an~$\omega$-cover of~$X$. Since $X$ is an~$\subselmgR{\J}$-space, there is a $\J$-to-one function $\psi\colon\omega\to\omega$ such that $\seqn{V_{\psi(n),\varphi(\psi(n))}}{n}$ is a~$\J$-$\gamma$-cover of~$X$.
\par
We claim that the~sequence $\seqn{f_{\varphi(\psi(n))}}{n}$ is $\J$-$\gamma$-converging to~$\zero$. Indeed, let $\varepsilon>0$ and $x\in X$. Note that $\set{n}{2^{-\psi(n)}<\varepsilon}\in\J^\ast$. Also, $\set{n}{f_{\varphi(\psi(n))}(x)<\varepsilon}$ is a superset of the~intersection of the~sets $\set{n}{2^{-\psi(n)}<\varepsilon}$ and $\set{n}{x\in V_{\psi(n),\varphi(\psi(n))}}$, both being from~$\J^\ast$.
\end{proof}
\par
We shall use Proposition~\ref{subselplusmm} in constructions of $\zsubselmgR{\J}$-spaces in~Theorem~\ref{zsubseqn-nond}. Thus an~appropriate sufficient condition for a~set of reals to be an~$\sonemm$-space is needed. The~following theorem is a~consequence of results in~\cite{KoSch03,TsWe04}. Let us recall that a~topological space~$X$ has the~\emph{Rothberger property} if for every sequence $\seqn{\V_n}{n}$ of open covers of~$X$ there are $V_n\in\V_n$ such that $\set{V_n}{n\in\omega}$ is an~open cover of~$X$. A~topological space~$X$ has the~\emph{Hurewicz property} if for every sequence $\seqn{\V_n}{n}$ of open covers of~$X$ without finite subcovers there are finite $\W_n\subseteq\V_n$ such that $\set{\bigcup\W_n}{n\in\omega}$ is a~$\gamma$-cover of~$X$. For more on Rothberger and Hurewicz properties see~\cite{buk-str,Comb1}.
\begin{theorem}[L.D.R.~Ko\v cinac--M.~Scheepers, B.~Tsaban--T.~Weiss]\label{H+R-sonemm}
Let $X$ be a~set of reals such that $X^n$ has the~Hurewicz property for each~$n$. If $X$ has the~Rothberger property, then $X$ is an~$\sonemm$-space.
\end{theorem}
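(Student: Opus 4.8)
The plan is to recast the statement in the language of selection principles and then combine two results from the cited literature. Recall that the $\sonemm$-property is exactly $\sone{\Omega}{\Omega}$, and that Rothberger property is $\sone{\OO}{\OO}$. By the results of~\cite{KoSch03,TsWe04}, the requirement that every finite power $X^n$ be Hurewicz is equivalent to $X$ satisfying the Hurewicz property for $\omega$-covers, i.e. $X\in{\rm S}_{\rm fin}(\Omega,\Gamma)$. Thus it suffices to prove that a~set of reals which is simultaneously $\sone{\OO}{\OO}$ and ${\rm S}_{\rm fin}(\Omega,\Gamma)$ is an~$\sonemm$-space.

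First I would isolate the elementary \emph{power trick}. If $\V$ is an~open $\omega$-cover of~$X$ and $k\geq 1$, then $\set{V^k}{V\in\V}$ is an~open cover of~$X^k$, since a~finite set $F=\{x_1,\dots,x_k\}$ is contained in some $V\in\V$ precisely when the~point $(x_1,\dots,x_k)$ lies in some $V^k$. From this one obtains: \emph{if every finite power $X^k$ has Rothberger property, then $X$ is an~$\sonemm$-space.} Indeed, given open $\omega$-covers $\seqn{\V_n}{n}$, fix a~partition $\omega=\bigcup_{k\geq 1}B_k$ into infinite sets; for each $k$ apply $\sone{\OO}{\OO}$ of~$X^k$ to the~covers $\set{V^k}{V\in\V_n}$, $n\in B_k$, to choose $V_n\in\V_n$ (for $n\in B_k$) so that $\set{V_n^k}{n\in B_k}$ covers~$X^k$. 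Then $\set{V_n}{n\in\omega}$ is an~$\omega$-cover of~$X$: any finite $F$ with $|F|=k$ is a~point of~$X^k$, hence lies in some $V_n^k$ with $n\in B_k$, so $F\subseteq V_n$.

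It remains to secure the~Rothberger property of each power~$X^k$, and this is where the~Hurewicz hypothesis is consumed. For $k=1$ this is the~assumption on~$X$ itself. For $k\geq 2$ write $X^k=X\times X^{k-1}$: here $X$ is Rothberger while $X^{k-1}$ is Hurewicz by hypothesis, and the~content imported from~\cite{KoSch03,TsWe04} is precisely that Rothberger property, though not finitely productive in general, is preserved under multiplication by a~Hurewicz factor. Hence $X^k$ has Rothberger property for every $k\geq 1$, and the~previous paragraph yields $X\in\sone{\Omega}{\Omega}$, as desired.

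I expect the~genuine obstacle to be exactly this last step. The~failure of finite productivity of $\sone{\OO}{\OO}$ means that Rothberger of~$X$ by itself cannot deliver Rothberger of~$X^k$, and the~Hurewicz property of the~powers is what compensates. Technically the~product theorem is proved by a~Fubini-style argument: one refines a~cover of $X\times X^{k-1}$ to rectangular covers, uses the~Hurewicz selection in the~$X^{k-1}$-coordinate to replace each cover by a~$\gamma$-sequence of finite unions of boxes, and then performs a~single Rothberger selection in the~$X$-coordinate to pin down one box per index; keeping the~two selections compatible across all indices simultaneously is the~delicate bookkeeping at the~heart of the~matter.
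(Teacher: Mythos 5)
Your overall architecture---reduce to ``all finite powers of $X$ are Rothberger'' and then run the power trick to get $\sonemm$---is sound, and your proof of that second step (partitioning $\omega$ into infinitely many infinite blocks $B_k$ and applying $\sone{\OO}{\OO}$ of $X^k$ to the covers $\set{V^k}{V\in\V_n}$, $n\in B_k$) is correct; this is Sakai's argument. Note that the paper itself offers no proof of this theorem: it is stated as a consequence of results in \cite{KoSch03,TsWe04}, so there is nothing to compare against line by line, and the real question is whether your argument stands on its own.

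It does not, and the gap is exactly at the step you yourself flag as the heart of the matter. The principle you invoke---that the Rothberger property is preserved under multiplication by a Hurewicz factor---is not a theorem of \cite{KoSch03,TsWe04} and is false. Already $X\times[0,1]$ fails to be Rothberger for any nonempty Rothberger $X$ (it contains a copy of $[0,1]$, which does not have strong measure zero), and no strengthening of the hypotheses on the two factors separately can rescue a binary product theorem: by \cite{MilTsaZdo16} (cited elsewhere in this paper), under \CH\ there are two $\gamma$-sets---hence Rothberger sets all of whose finite powers are Hurewicz---whose product is not even Menger. Your Fubini sketch cannot be repaired as described either: the Hurewicz selection on the $X^{k-1}$-side produces \emph{finite} families of boxes at each index, together with $x$-dependent neighbourhoods and an ``all but finitely many'' quantifier, so a single $\sone{\OO}{\OO}$-selection on the $X$-side can at best deliver finitely many sets per index (a Menger-type conclusion), and the counterexamples show even that fails. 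What actually makes the theorem true is that every coordinate carries the \emph{same} space $X$ and that \emph{all} of its finite powers are Hurewicz. The derivation in \cite{KoSch03,TsWe04} goes through groupability: ``$X^n$ Hurewicz for all $n$'' is equivalent to $\mathrm{S}_{\mathrm{fin}}(\Omega,\Omega\gp)$ (not to $\mathrm{S}_{\mathrm{fin}}(\Omega,\Gamma)$, which is the $\gamma$-property---your stated equivalence is also off), and a single application of $\sone{\OO}{\OO}$ to $X$ itself then refines a groupable $\omega$-cover to a selected $\omega$-cover. An equivalent route passes through strong measure zero: Rothberger gives $X$ strong measure zero, the product of a strong measure zero set with a Hurewicz set has strong measure zero (so all $X^n$ do), and a Hurewicz set of strong measure zero is Rothberger. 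Either of these would close your gap; the binary product theorem you rely on will not.
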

\par
Theorem~\ref{H+R-sonemm} will help us to conclude that the spaces we construct below with the help of
$\mathcal J$-evading double scales (see p.~\pageref{evades} for the definition thereof) satisfy $\sonemm$, thus
allowing to use Proposition~\ref{subselplusmm} in order to establish the corresponding local properties of function spaces.

\begin{lema}\label{quasiordering-families}
Let $\J$ be an ideal on $\omega$ and suppose that $\set{a_\xi}{\xi<\bb}\subseteq\infin$  $\J$-evades
$\set{ b_\xi}{\xi<\bb}\subseteq\infin$
 such that for every $f\in\infin$ there exists $\beta<\bb$ with $b_\beta\subseteq f$.
         Then
for any $f$ there exists  $\beta<\bb$ such that the~family $\set{\set{i}{f(i)\leq a_\alpha(i)}}{\alpha\geq\beta}$ has \fip.
     Moreover, if $\J$ is meager,  then the~filter induced by the~aforementioned family is meager as well.
\end{lema}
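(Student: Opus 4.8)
The plan is to fix $f\in\infin$, use the $\subseteq$-density of $\{b_\xi:\xi<\bb\}$ to pull a scale $b_\beta\subseteq f$ below $f$, and then read the desired \fip off the interval partition induced by $b_\beta$. Since $b_\beta\subseteq f$ forces $f(i)\le b_\beta(i)$ for every $i$, it is enough to produce, for every finite $\alpha_0,\dots,\alpha_k\ge\beta$, infinitely many indices $i$ with $f(i)\le a_{\alpha_l}(i)$ for all $l$ simultaneously; equivalently, that $\min_{l\le k}a_{\alpha_l}$ is not eventually strictly dominated by $f$. Thus the statement to prove is a domination statement, and the $b_\beta$-intervals will be the bookkeeping device linking it to $\J$-evasion.

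First I would record the consequence of $\J$-evasion. For $\beta\le\alpha_l$ the evasion hypothesis gives $\bi_{b_\beta}[a_{\alpha_l}]\in\J$, so, $\J$ being an ideal, $B=\bigcup_{l\le k}\bi_{b_\beta}[a_{\alpha_l}]\in\J$ and $C=\omega\setminus B\in\J^\ast$ is infinite. For $j\in C$ the interval $[b_\beta(j),b_\beta(j+1))$ is disjoint from every $a_{\alpha_l}$, whereas it contains the point $b_\beta(j)\in f$. Introducing the relative counting functions $R_l(w)=|a_{\alpha_l}\cap[0,w]|-|f\cap[0,w]|$, the key elementary observation is that $a_{\alpha_l}(i)\ge f(i)$ is equivalent to $R_l(f(i)-1)\le0$, and that across each interval indexed by $j\in C$ the value of $R_l$ strictly decreases, indeed by at least $1$, since $a_{\alpha_l}$ gains nothing while $f$ gains at least the point $b_\beta(j)$. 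Setting $\Phi=\max_{l\le k}R_l$, the function $\Phi$ drops by at least $1$ across each of the infinitely many $C$-intervals, and $\Phi(b_\beta(j+1)-1)\le0$ exactly witnesses $f(i)\le a_{\alpha_l}(i)$ for all $l$ at the index $i$ with $f(i)=b_\beta(j+1)$.

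The main obstacle is that on the intervals $a_{\alpha_l}$ does meet (those indexed by $B\in\J$) the set $a_{\alpha_l}$ may be very dense, so that $\Phi$ could climb back up and never return to $0$; omitting a $\J^\ast$-set of intervals is by itself not enough to force $a_{\alpha_l}$ below $f$ in the eventual-domination order. The way I would close this gap is to use the full strength of the $\subseteq$-density together with evasion at several resolutions: applying the density hypothesis to suitably sparse infinite subsets of $f$ produces scales $b_{\beta'}\subseteq f$ with arbitrarily long intervals but bounded index $\beta'<\bb$, and for $\alpha\ge\beta'$ the evasion $\bi_{b_{\beta'}}[a_\alpha]\in\J$ now forbids $a_\alpha$ from concentrating inside the blocks of $b_\beta$. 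Choosing $\beta$ past the indices of these coarser scales, the cumulative gain of each $a_{\alpha_l}$ over the hit intervals stays dominated by its losses over the commonly missed ones, so $\Phi\le0$ infinitely often and \fip follows. This interplay, converting ``$a_\alpha$ omits $\J^\ast$-many intervals at every resolution below $\alpha$'' into ``$a_\alpha$ is not eventually denser than $f$'', is the heart of the argument and the step I expect to require the most care.

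For the ``moreover'', I would invoke the Talagrand--Jalali-Naini description of meager filters as the feeble ones: a filter is meager iff there is a partition of $\omega$ into finite intervals met cofinitely often by every member. Assuming $\J$ meager, fix such a partition witnessing the meagerness of $\J^\ast$; since each complement $\{i:f(i)>a_\alpha(i)\}$ is, via the map $\bi_{b_\beta}$, controlled by a set of $\J$, namely the evasion set of $a_\alpha$, this partition pulls back along the $b_\beta$-blocks to a finite-interval partition of $\omega$ that is met cofinitely often by every set $\{i:f(i)\le a_\alpha(i)\}$, and hence by every element of the generated filter. Therefore that filter is feeble, i.e.\ meager, completing the proof.
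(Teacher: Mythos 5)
Your overall shape (reduce \fip\ to a domination statement, read it off the interval partition of some $b_\beta$ sitting inside a fast subset of $\omega$, and for the ``moreover'' push the interval partition witnessing meagerness of $\J$ through the $b_\beta$-blocks) is the right one, and the last paragraph does match the paper. But the main step has a genuine gap, and you have located it yourself: taking $b_\beta\subseteq f$ is not enough, and the repair you sketch does not work as stated. Evasion does \emph{not} ``forbid $a_\alpha$ from concentrating inside the blocks'': it only says that $a_\alpha$ meets a $\J$-small set of blocks, and on the blocks it does meet it may contain the entire block, so the ``cumulative gain over the hit intervals'' can exceed the ``losses over the commonly missed ones'' by an arbitrary amount. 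No choice of finitely or countably many ``coarser resolutions'' indexed below $\bb$ is shown (or can be expected) to balance these; the assertion that the gains stay dominated by the losses is exactly the unproved heart of your argument.

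The paper closes this gap with a single reparametrization, and no counting of hit blocks at all: set $f'(0)=f(1)$ and $f'(n+1)=f(f'(n)+1)$, and apply the density hypothesis to $f'$ rather than to $f$, obtaining $b_\beta\subseteq f'$. If now $a\cap[b_\beta(k),b_\beta(k+1))=\emptyset$, then on one hand $a(b_\beta(k))\geq b_\beta(k+1)$, because $a$ has at most $b_\beta(k)$ elements below $b_\beta(k)$ and then skips the whole block; on the other hand, writing $b_\beta(k)=f'(i)$, we get $b_\beta(k+1)\geq f'(i+1)>f(f'(i))=f(b_\beta(k))$. Hence a \emph{single} missed block already yields $f(b_\beta(k))<a(b_\beta(k))$, using only the trivial bound $|a\cap b_\beta(k)|\leq b_\beta(k)$ — no matter how dense $a$ is on the blocks it meets. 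Evasion then gives $b_\beta\bigl[F_{\beta,\alpha}\bigr]\subseteq\set{i}{f(i)<a_\alpha(i)}$ with $F_{\beta,\alpha}=\set{k}{[b_\beta(k),b_\beta(k+1))\cap a_\alpha=\emptyset}\in\J^\ast$ for all $\alpha\geq\beta$, from which \fip\ is immediate (finite intersections of $\J^\ast$-sets are infinite), and the generated filter is contained in the filter generated by $\set{b_\beta[F]}{F\in\J^\ast}$, which is meager whenever $\J$ is. Your $R_l$/$\Phi$ bookkeeping becomes superfluous once $b_\beta$ is taken inside $f'$; without that choice, the step you flag as ``requiring the most care'' is missing, not merely delicate.
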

\begin{proof}
Set $f'(0)=f(1)$ and $f'(n+1)=f(f'(n)+1)$ for all $n\in\omega$. 
\begin{clm*}
If  $b\subseteq f'$,  then   
 for any $a\in\infin$ and $k\in\omega$, if $a\cap [b(k), b(k+1))=\emptyset$,  then 
$f(b(k))< a(b(k))$, i.e., $b(k)\in\{m:f(m)< a(m)\}$.
\end{clm*}
\begin{proof}
Given any $k\in\omega$ let us denote by $i$ and $j$ natural numbers such that $b(k)=f'(i)$ and $b(k+1)=f'(j)$.
Then $b(k+1)=f'(j)\geq f'(i+1)>f(f'(i))=f(b(k))$ and $a(b(k))\geq b(k+1)$ since $a\cap [b(k),b(k+1))=\emptyset$.
Combining these two inequalities we get $a(b(k))>f(b(k))$.
\end{proof}
Let $\beta\in\bb$ be such that $b_\beta\subseteq f'$.
 Since $\set{a_\xi}{\xi<\bb}\subseteq\infin$  $\J$-evades
$\set{ b_\xi}{\xi<\bb}\subseteq\infin$, we have
$$ F_{\beta,\alpha}:=\{ k\in\omega:[b_\beta(k), b_\beta(k+1))\cap a_\alpha=\emptyset\}\in\J^* $$
for all $\alpha\geq \beta$. Applying the claim proved above we conclude that 
$b_\beta[F_{\beta,\alpha}]\subseteq \{i:\ f(i)<a_{\alpha}(i)\}$ for all $\alpha\geq\beta$, 
and hence the family 
$\G:=\set{\set{i}{f(i)\leq a_\alpha(i)}}{\alpha\geq\beta}$ has \fip.
Moreover, if $\J$ is meager, so is the filter generated by $\{b_\beta[F]:F\in\J^*\}$.
Since $\G$ is a subset of the latter one, it is meager as well.
\end{proof}
\par
Our assumption in Lemma~\ref{quasiordering-families} implicitly gives
$\bb=\cc$: Since there are almost disjoint subfamilies of 
$[\omega]^\omega$ of size $\cc$, no family $B\subseteq [\omega]^\omega$
of size $<\cc$ can have the property that for each $f\in [\omega]^\omega$ there exists $b\in B$ with $b\subseteq f$. This is not an essential restriction in this context since we need Lemma~\ref{quasiordering-families} for the proof of 
Theorem~\ref{zsubseqn-nond} where we anyway need to assume CH due to the usage
of $\leq^\omega_K$.
\smallskip

 The~proof of the next statement is a direct application of \cite[Lemma~13]{BaTs} in the style of 
\cite[Theorem~5.1]{TsZd08}. We shall construct a~variation of a~scale
satisfying the premises of Proposition~\ref{subselplusmm}, 
 see~\cite{buk-str,Ts11,TsZd08} for an~overview of similar results. Let us recall that $A\subseteq\PP(\omega)$ is $\kappa$-concentrated on $\fin$ if $|A\setminus V|<\kappa$ for any open set $V\supseteq\fin$.  
\begin{prop}\label{representativefamiliesfunc}
Under the assumptions of Lemma~\ref{quasiordering-families}, 
the set  $A=\set{a_\alpha}{\alpha<\bb}\cup\fin$
is $\bb$-concentrated on~$\fin$ and has 
 all finite powers Hurewicz. Moreover, 
if $\bb=\mathrm{cov}(\mathcal M)$
and $\set{a_\alpha, b_\alpha}{\alpha<\bb}$ is $\J$-evading double scale, then
 $\cp{A}$ is an~$\zsubselmgR{\J}$-space.
\end{prop}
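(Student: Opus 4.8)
The plan is to establish the three assertions in turn, deriving the statement about $\cp{A}$ from the first two together with the results already assembled in this section.

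First I would prove that $A$ is $\bb$-concentrated on $\fin$. Fix an open $V\supseteq\fin$. For a finite $s\subseteq\omega$ let $K(s)$ be the least $k>\max s$ with $\set{a}{a\cap k=s}\subseteq V$ (it exists since $s\in V$ and $V$ is open), and set $f(n)=\max\set{K(s)+1}{s\subseteq\{0,\dots,n\}}$, enlarged if necessary so that $f\in\infin$ is strictly increasing. The key elementary observation is that whenever $a\cap[m,f(m))=\emptyset$ for some $m$, we have $a\cap f(m)=a\cap m$ and $f(m)\ge K(a\cap m)$, hence $a\in\set{a'}{a'\cap f(m)=a\cap m}\subseteq V$. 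Now apply the hypothesis to the iterate $f'$ of $f$ (defined as in Lemma~\ref{quasiordering-families}) to obtain $\beta<\bb$ with $b_\beta\subseteq f'$; for this $b_\beta$ the computation inside the proof of Lemma~\ref{quasiordering-families} gives $b_\beta(k+1)>f(b_\beta(k))$ for every $k$. For any $\alpha\ge\beta$, since $\set{a_\xi}{\xi<\bb}$ $\J$-evades $\set{b_\xi}{\xi<\bb}$, the set $\set{k}{[b_\beta(k),b_\beta(k+1))\cap a_\alpha=\emptyset}$ belongs to $\J^{*}$ and is in particular nonempty; choosing such a $k$ and putting $m=b_\beta(k)$ we get $[m,f(m))\subseteq[b_\beta(k),b_\beta(k+1))$, whence $a_\alpha\cap[m,f(m))=\emptyset$ and so $a_\alpha\in V$ by the observation above. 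Consequently $\set{\alpha<\bb}{a_\alpha\notin V}\subseteq\beta$, and $|A\setminus V|\le|\beta|<\bb$.

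Next, for the Hurewicz property of all finite powers I would recognize $\set{a_\alpha}{\alpha<\bb}$ as a scale: the conclusion of Lemma~\ref{quasiordering-families} says that for every $f$ some tail $\set{a_\alpha}{\alpha\ge\beta}$ is pointwise unbounded over $f$ with the finite intersection property, and --- when $\J$ is meager, as it will be in the applications --- the generated filters are meager. This is exactly the combinatorial input required by \cite[Lemma~13]{BaTs}, whose conclusion, argued in the style of \cite[Theorem~5.1]{TsZd08}, is that $A=\set{a_\alpha}{\alpha<\bb}\cup\fin$ has the Hurewicz property in all finite powers. I expect this step, rather than the concentration above, to be the main obstacle, as it is where the full strength of the scale structure and the meagreness of the associated filters is consumed; the remaining verifications are bookkeeping.

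Finally, assume $\bb=\covm$ and that $\set{a_\alpha,b_\alpha}{\alpha<\bb}$ is a $\J$-evading double scale. By the first part $A$ is $\covm$-concentrated on the countable set $\fin$, and a set that is $\covm$-concentrated on a countable set has the Rothberger property; combined with all finite powers being Hurewicz this yields, through Theorem~\ref{H+R-sonemm}, that $A$ is an $\sonemm$-space. On the other hand, being a $\J$-evading double scale, $A$ is an $\subselmgooR{\J}$-space by Lemma~\ref{representativefamilies}, and hence an $\subselmgR{\J}$-space. Applying Proposition~\ref{subselplusmm} to the $\sonemm$- and $\subselmgR{\J}$-space $A$ then gives that $\cp{A}$ is an $\zsubselmgR{\J}$-space, completing the proof.
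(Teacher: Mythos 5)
Your proposal is correct and its overall architecture matches the paper's: concentration, then Hurewicz in all finite powers, then Rothberger via $\covm$-concentration, $\sonemm$ via Theorem~\ref{H+R-sonemm}, and finally Proposition~\ref{subselplusmm} applied to the $\subselmgR{\J}$-space obtained from Lemma~\ref{representativefamilies}; this last part is essentially verbatim what the paper does. Where you genuinely diverge is the concentration step: the paper observes that $\PP(\omega)\setminus O$ is a compact subset of $\infin$, hence dominated by a single $f$, and then reads off from Lemma~\ref{quasiordering-families} that $a_\alpha\in O$ for all $\alpha\geq\beta$; you instead run a direct Galvin--Miller-style interval argument, extracting from the open set $V\supseteq\fin$ a function $f$ such that $a\cap[m,f(m))=\emptyset$ forces $a\in V$, and then using the evading property of a single $b_\beta\subseteq f'$. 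Both are sound; yours is more self-contained (it reuses the Claim inside Lemma~\ref{quasiordering-families} explicitly and avoids the compactness/boundedness detour), while the paper's is shorter.

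The one place where you stop short of an actual proof is the Hurewicz step. You correctly identify the ingredients --- \cite[Lemma~13]{BaTs}, the fact that the tails $\set{\set{i}{f(i)\leq a_\alpha(i)}}{\alpha\geq\beta}$ generate a meager filter when $\J$ is meager, and boundedness of continuous images as the criterion for Hurewicz --- but you then assert the conclusion for \emph{all finite powers} by citation. The paper executes an induction on $k$: for $k=1$ one gets $\Psi(a_\alpha)\leq_\F g$ for $\alpha\geq\beta$ and handles the remaining $<\bb$ points by hand; for $k\geq2$ the tail box $E=\set{\langle a_{\alpha_0},\dots,a_{\alpha_{k-1}}\rangle}{\alpha_i\geq\beta}$ is bounded by the same filter argument, and $A^k\setminus E$ is covered by fewer than $\bb$ sets each homeomorphic to $A^{k-1}$, to which the inductive hypothesis applies. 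That decomposition is the content of the ``style of \cite[Theorem~5.1]{TsZd08}'' reference and is the piece your write-up does not supply; you should spell it out (or at least state it) rather than fold it into ``bookkeeping.'' A minor point you handle the same way the paper does: meagerness of $\J$ is not literally among ``the assumptions of Lemma~\ref{quasiordering-families}'' but is needed for the Hurewicz conclusion, and both you and the paper quietly import it.
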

\begin{proof}
  By induction on $k\in\omega$, we shall show first that $A^k$ has the~Hurewicz property. Suppose first that $k=1$
and consider a continuous $\Psi\colon A\to\infin$. By  Hurewicz' characterization \cite{Hur27}
(see also  \cite[Theorem 4.4]{COC2} or \cite{rec} for more recent proofs) it is enough to show that the~image~$\Psi[A]$ is bounded in~($\baire\omega$, $\leq^\ast$). 
By \cite[Lemma~13]{BaTs}, there is $g\in\infin$ such that for any $a\in A$ we have
\[
\set{i}{g(i)<a(i)}\subseteq\set{i}{\Psi(a)(i)\leq g(i)}.
\]
By Lemma~\ref{quasiordering-families}, there exist $\beta<\bb$ and  a~meager filter~$\F$ such that $\set{\set{i}{g(i)<a_\alpha(i)}}{\alpha\geq\beta}\subseteq\F$. Then $g<_\F a_\alpha$ for $\alpha\geq\beta$, and by the~above inclusion, we obtain that $\Psi(a_\alpha)\leq_\F g$ for $\alpha\geq\beta$. Since $\Psi[\set{a_\alpha}{\alpha<\beta}\cup\fin]$ has  cardinality $<\bb$, the~image $\Psi[A]$ is bounded in~($\baire\omega$, $\leq_\F$). However, $\F$ is meager and hence $\Psi[A]$ is bounded in~($\baire\omega$, $\leq^\ast$) as well, see
\cite[Theorem~5.3]{farksouk} or \cite[Lemma~2.26]{TsZd08}  for a proof of this easy fact.
\par
Now suppose that $A^{k-1}$ is Hurewicz for some $k\geq 2$ and consider a continuous
 $\Psi\colon A^k\to\infin$. By \cite[Lemma~13]{BaTs}, there is $g\in\infin$ such that for any $a^0,\dots, a^{k-1}\in A$ we have
\[
\set{i}{g(i)<\min\{a^0(i),\dots,a^{k-1}(i)\}}\subseteq\set{i}{\Psi(a^0,\dots,a^{k-1})(i)\leq g(i)}.
\]
By Lemma~\ref{quasiordering-families}, there is $\beta<\bb$ and a~meager filter~$\F$ such that $\set{\set{i}{g(i)<a_\alpha(i)}}{\alpha\geq\beta}\subseteq\F$. For any $\alpha_0,\dots,\alpha_{k-1}\geq\beta$ we obtain that $\set{i}{g(i)<\min\{a_{\alpha_0}(i),\dots,a_{\alpha_{k-1}}(i)\}}\in\F$. It follows from the above that $\Psi(a_{\alpha_0},\dots,a_{\alpha_{k-1}})\leq_\F g$ for any $\alpha_0,\dots,\alpha_{k-1}\geq\beta$, and hence also $\Psi[E]$
is bounded in~($\baire\omega$, $\leq^\ast$) for $E=\set{
\langle a_{\alpha_0},\dots,a_{\alpha_{k-1}}\rangle}{\alpha_0,\dots,\alpha_{k-1}\geq\beta}$  because $\F$ is meager, see the discussion in case $k=1$.
\par
Let us show that the~set~$\Psi[A^k\setminus E]$ is bounded. For fixed $a\in\set{a_\xi}{\xi<\beta}$ and $i<k$ consider the~set $E^i_a=\set{\langle a_{\alpha_0},\dots,a_{\alpha_{i-1}},a,a_{\alpha_{i+1}},\dots,a_{\alpha_{k-1}}\rangle}{\alpha_0,\dots,\alpha_{i-1},
\alpha_{i+1},\dots,\alpha_{k-1}\geq\beta}$. There are fewer than $\bb$ of sets~$E^i_a$, they cover $A^k\setminus E$,  and each $E^i_a$ is homeomorphic to~$A^{k-1}$. It suffices now to apply our inductive assumption to conclude that $\Psi[A^k]$ is bounded in $(\baire\omega$, $\leq^\ast)$, which completes our proof of all finite powers of $A$ being Hurewicz.
\par
Given any open neighbourhood $O$ of $\fin$,
the complement  $\PP(\omega)\setminus O$  is a compact subspace of $\infin$, hence it is  bounded by some $f\in\infin$, and therefore there exists $\beta<\bb$ such that $O\ni a_\alpha$ for all $\alpha\geq \beta$,
see Lemma~\ref{quasiordering-families}. This means that $A$ is $\bb$-concentrated.
\par
Now if $\bb=\mathrm{cov}(\mathcal M)$, then $A$ is  $\mathrm{cov}(\mathcal M)$-concentrated and
hence Rothberger by \cite[Theorem 4.2]{COC2}. Thus by  Theorem~\ref{H+R-sonemm} we conclude that $A$ is an~$\sonemm$-space.
If in addition $\set{a_\alpha, b_\alpha}{\alpha<\bb}$ is $\J$-evading double scale,
 Lemma~\ref{representativefamilies} implies that $A$ is an~$\subselmgR{\J}$-space.  Hence, by Proposition~\ref{subselplusmm}, we conclude that $\cp{A}$ is an~$\zsubselmgR{\J}$-space, which completes our proof.
\end{proof}
\begin{theorem}[\CH]\label{zsubseqn-nond}
Let  $\I$ be  a~meager ideal.
\begin{enumerate}[(1)]
    \item If $\I\not\leqcountk\fin^\alpha$ for each $\alpha<\omega_1$, then there is $A\subseteq\PP(\omega)$ such that $\cp{A}$ is an~$\zsubselmgR{\I}$-space but $A$ is not a~$\delta$-set.
    \item If $\J$ is an~ideal and $\I\not\leqcountk\J$, then there is $A\subseteq\PP(\omega)$ such that $\cp{A}$ is an~$\zsubselmgR{\I}$-space but not an~$\zsubselmgR{\J}$-space.
\end{enumerate}
\end{theorem}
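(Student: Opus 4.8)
The plan is to handle both parts through a single transfinite construction, carried out under $\CH$ so that $\cc=\bb=\covm=\aleph_1$. Concretely, I would build an $\I$-evading double scale $\set{a_\alpha,b_\alpha}{\alpha<\cc}$ that at the same time (a) diagonalises against the relevant target ideals exactly as in Theorem~\ref{subseqn-nond}, and (b) satisfies the extra hypothesis of Lemma~\ref{quasiordering-families}, namely that for every $f\in\infin$ there is $\beta<\cc$ with $b_\beta\subseteq f$. Setting $A=\set{a_\alpha}{\alpha<\cc}\cup\fin$, the positive half of each statement—that $\cp{A}$ is an $\zsubselmgR{\I}$-space—will be delivered by Proposition~\ref{representativefamiliesfunc}, whose remaining hypotheses ($\bb=\covm$, $\I$ meager, and $\set{a_\alpha,b_\alpha}{\alpha<\cc}$ an $\I$-evading double scale) are granted by $\CH$ together with the standing assumption that $\I$ is meager. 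The negative halves will come from Lemma~\ref{nothavingproperties}, since keeping all $a_\alpha$ inside the ideal $\G^\ast$ of a free ultrafilter $\G$ guarantees $\fup$.

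First I would replay the recursion of Theorem~\ref{subseqn-nond}. For part~(1) take the target family $\set{\fin^\alpha}{\alpha<\omega_1}$, and for part~(2) the constant family $\J$; in either case the hypothesis $\I\not\leqcountk\fin^\alpha$ (respectively $\I\not\leqcountk\J$) is precisely what Lemma~\ref{gfinnotlessi}(2) requires, and by Corollary~\ref{goodcouples}(1) the first of these is just $\I\not\leqk\fin^\alpha$. At stage $\alpha$, with $\set{a_\beta,b_\beta}{\beta<\alpha}$ already chosen, I would pick $b_\alpha$ by Lemma~\ref{GMlemma}(2) applied to the listed $\omega$-cover, but using the freedom of that lemma to force $b_\alpha\subseteq g_{\pi(\alpha)}$, where $\set{g_\eta}{\eta<\cc}$ enumerates $\infin$ and $\pi\colon\cc\to\cc$ is a bookkeeping surjection hitting each value cofinally (when the listed cover fails to be an $\omega$-cover of $\set{a_\beta}{\beta<\alpha}\cup\fin$, I simply take $b_\alpha$ to be any infinite subset of $g_{\pi(\alpha)}$). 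This secures at once the coding of representative covers and the hypothesis of Lemma~\ref{quasiordering-families}. Then I would choose $a_\alpha$ via Lemma~\ref{gfinnotlessi}(2) so that (i) $f_\alpha^{-1}[a_\alpha]\in\J_\xi^+$, (ii) $a_\alpha\in\G^\ast$, and (iii) $\bi_{b_\beta}[a_\alpha]\in\I$ for all $\beta\leq\alpha$; clause~(iii) is exactly the $\I$-evading requirement, and under $\CH$ the countability of $\set{b_\beta}{\beta\leq\alpha}$ lets it be absorbed as the countable parameter $B$ of that lemma.

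With the scale in hand, Lemma~\ref{representativefamilies} makes $A$ an $\subselmgooR{\I}$-space, while Proposition~\ref{representativefamiliesfunc} shows $A$ has all finite powers Hurewicz, is $\bb$-concentrated on $\fin$, hence Rothberger and thus an $\sonemm$-space by Theorem~\ref{H+R-sonemm}, and finally—through Proposition~\ref{subselplusmm}—that $\cp{A}$ is an $\zsubselmgR{\I}$-space. For the negative half of part~(1), clause~(i) with $\J_\xi=\fin^\xi$ gives $A\not\leqk\fin^\xi$ for every $\xi<\omega_1$, so with $\fup$ and Lemma~\ref{nothavingproperties}(3) the set $A$ is not a $\delta$-set. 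For part~(2), clause~(i) gives $A\not\leqk\J$, whence Lemma~\ref{nothavingproperties}(4) shows $A$ is not an $\subselmgR{\J}$-space; since $\cp{X}$ being an $\zsubselmgR{\J}$-space forces $X$ to be an $\subselmgR{\J}$-space, $\cp{A}$ cannot be an $\zsubselmgR{\J}$-space.

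The hard part will be verifying that the two agendas do not collide: clause~(iii) asks each $a_\alpha$ to meet the interval partitions of all earlier $b_\beta$ on an $\I$-small set, while the concentration/Hurewicz machinery needs the $b_\beta$ cofinally thin (the $b_\beta\subseteq f$ clause) so that the $a_\alpha$ escape every neighbourhood of $\fin$. These are reconciled because Lemma~\ref{GMlemma}(2) permits $b_\alpha$ to be drawn from an arbitrary prescribed infinite set, so thinning it into $g_{\pi(\alpha)}$ costs nothing in the coding step, and because Lemma~\ref{gfinnotlessi}(2) is insensitive to which particular $b_\beta$ populate $B$. The one genuinely non-formal ingredient is the meagerness of $\I$: it is what lets Lemma~\ref{quasiordering-families} produce a \emph{meager} filter and hence, inside Proposition~\ref{representativefamiliesfunc}, upgrade $\leq_\F$-boundedness of continuous images to $\leq^\ast$-boundedness—the step actually securing the Hurewicz property of all finite powers that feeds Theorem~\ref{H+R-sonemm} and Proposition~\ref{subselplusmm}.
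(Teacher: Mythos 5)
Your proposal is correct and follows essentially the same route as the paper: the paper's proof of this theorem is literally "repeat the proof of Theorem~\ref{subseqn-nond}, additionally arranging that for every $f\in\infin$ some $b_\xi$ is a subset of $f$ (via bookkeeping and Lemma~\ref{GMlemma}), and invoke Proposition~\ref{representativefamiliesfunc} in place of Lemma~\ref{representativefamilies}," which is exactly the construction you describe. Your closing paragraph correctly identifies the only delicate points (compatibility of the thinning of the $b_\alpha$'s with the evasion clause, and the role of meagerness of $\I$ in upgrading $\leq_\F$-boundedness to $\leq^\ast$-boundedness), and resolves them as the paper does.
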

\begin{proof}
It essentially suffices to  repeat the  proof of Theorem~\ref{subseqn-nonsubseqn}, with the following addition: When constructing the family $\set{b_\xi}{\xi<\cc}$, we have to make sure that it satisfies the premises of Lemma~\ref{quasiordering-families},
i.e., that for every $f\in\infin$ there exists $\xi$ with $b_\xi\subseteq f$. That is possible by enumerating all infinite subsets of $\omega$ and asking in the~$\xi$-th step that $b_\xi$ is a subset of $\xi$-th set, see Lemma~\ref{GMlemma}(1).
 Furthermore,  we have to use Proposition~\ref{representativefamiliesfunc} instead of Lemma~\ref{representativefamilies}.
\end{proof}
\par
As a consequence we get the~following   counterpart of Corollary~\ref{finalpha-nonfinbeta} for spaces of functions, which
gives an extensive list of arrows which consistently cannot be added to Diagram~\ref{diagram:katetovfu}.
\begin{corol}[\CH]\label{9_6_n}
The~statement
\begin{center}
    ``There is $A\subseteq\PP(\omega)$ such that $\cp{A}$ is an~$\zsubselmgR{\I}$-space but not an~$\zsubselmgR{\J}$-space"
\end{center}
is true for all pairs of ideals listed in Corollary~\ref{finalpha-nonfinbeta}.
\end{corol}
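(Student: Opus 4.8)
The plan is to read off every instance directly from Theorem~\ref{zsubseqn-nond}(2), without repeating any construction. Fix one of the pairs $(\I,\J)$ occurring in Corollary~\ref{finalpha-nonfinbeta}. To obtain $A\subseteq\PP(\omega)$ for which $\cp{A}$ is an~$\zsubselmgR{\I}$-space but not an~$\zsubselmgR{\J}$-space, Theorem~\ref{zsubseqn-nond}(2) asks for exactly three things: that $\J$ be Borel, that $\I\not\leqcountk\J$, and (from the standing hypothesis of that theorem) that $\I$ be meager. Thus the whole argument reduces to verifying these three conditions uniformly over the six families of pairs, and I would organise it that way.

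The inequality $\I\not\leqcountk\J$ is already the combinatorial heart of Corollary~\ref{finalpha-nonfinbeta} itself, so nothing new is required. For family~(1) it is Corollary~\ref{goodcouples}(1), which turns $\I\not\leqk\fin^\beta$ into $\I\not\leqcountk\fin^\beta$; for families~(2), (3), (5), (6) it is Corollary~\ref{goodcouples}(2), which licenses replacing every $\not\leqrestk$ in Lemma~\ref{katetov-rest} by $\not\leqcountk$; and for family~(4), where $\J\leqk\conv$, one combines $\I\not\leqcountk\conv$ (again Corollary~\ref{goodcouples}(2), via Lemma~\ref{katetov-rest}(4)) with the transitivity remark that $\I\leqcountk\J$ together with $\J\leqk\conv$ would force $\I\leqcountk\conv$. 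Hence $\I\not\leqcountk\J$ holds for every listed pair.

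It remains to see that the two ideals are of the right descriptive complexity. Every target $\J$ in the list is Borel: the $\rmP^+$-ideals $\fin,\summable,\ED,\EDfin,\Ran,\calS$ and the generic $\Fsigma$ ideals are $\Fsigma$, while $\fin^\beta$ and $\conv$ are Borel. For the source, every $\I$ that appears concretely ($\nwd$, $\calS$, $\fin^\alpha$, $\conv$, $\summable$) is Borel, and a Borel ideal is automatically meager: being analytic it has the Baire property, hence is either meager or comeager, and it cannot be comeager because the complementation homeomorphism $a\mapsto\omega\setminus a$ maps $\I$ onto $\I^\ast=\set{\omega\setminus a}{a\in\I}$, a set disjoint from $\I$, so $\I$ and $\I^\ast$ could not both be comeager. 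Therefore $\I$ is meager, and all three hypotheses of Theorem~\ref{zsubseqn-nond}(2) are met for each pair.

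The only real subtlety --- and the one point I would flag as the obstacle --- is that Theorem~\ref{zsubseqn-nond} carries a meagerness requirement on $\I$ that is invisible in the purely covering-theoretic Corollary~\ref{finalpha-nonfinbeta}; the function-space separation is therefore justified precisely for those instances whose source ideal is meager, which fortunately includes all the concretely listed ones. Everything else runs automatically inside Theorem~\ref{zsubseqn-nond}: the construction yields a $\J$-evading double scale with $\bb=\covm=\cc$ under~\CH, so Proposition~\ref{representativefamiliesfunc} delivers the $\sonemm$ property of $A$ and Proposition~\ref{subselplusmm} promotes the covering property $\subselmgR{\I}$ of $A$ to $\zsubselmgR{\I}$ for $\cp{A}$, while the failure of $\zsubselmgR{\J}$ descends from the failure of $\subselmgR{\J}$ established in Corollary~\ref{finalpha-nonfinbeta}.
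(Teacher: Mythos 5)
Your proof is correct and matches the paper's (implicit) argument: the paper states this corollary without a separate proof, as an immediate application of Theorem~\ref{zsubseqn-nond}(2) to the pairs for which $\I\not\leqcountk\J$ was already established via Corollary~\ref{goodcouples} in the proof of Corollary~\ref{finalpha-nonfinbeta}. Your explicit check that the source ideals are meager and the target ideals Borel --- and your remark that the fully general instances of the list are covered only when the source ideal is meager --- spells out a hypothesis that the paper leaves tacit.
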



\section{Consequences for cardinal invariants and other covering notions }\label{S-cardinals}

P.~Borodulin--Nadzieja and B.~Farkas~\cite{BorFar} have introduced and investigated cardinal invariant $\ppsq{\J}$ defined for $\square\in\{\text{\rm 1-1}, \mathrm{KB}, \mathrm{K}\}$ by 
\begin{center}
$\ppsq{\J}=\min\set{|\A|}{\A\subseteq\PP(\omega)\text{ has }\fup\wedge \A\not\leq_\square\J}$.
\end{center}
It has been shown in~\cite[Proposition~3.3]{Su20} that $\non{\subselmgsqR{\J}}=\ppsq{\J}$.\footnote{$\ppsq{\J}$ is a~uniformity number of $\zsubselmgsqR{\J}$ as well, see~\cite[Proposition~9.4]{Su20} or~\cite[Theorem~3.2]{BorFar} for details.} Moreover, it has been shown that $\ppsq{\Ran}=\ppsq{\conv}=\pp$. Since $\non{\gamma}=\pp$ by F.~Galvin and A.W.~Miller~\cite{GM}, and $\non{\pi}=\pp$ by P.~Simon and B.~Tsaban~\cite{SimTsa}, we obtain the~following consequence of Lemma~\ref{subsequenceimpliespi} and Proposition~\ref{subsequenceimpliespidelta}.
\begin{corol}\label{pseudoin-number}
\begin{enumerate}[(1)]
    \item If $\J$ is not $\omega$-hitting, then $\ppkb{\J}=\pp$.
    \item If $\J\leqk\nwd$ or $\J\leqk\fin^\alpha$, then $\ppk{\J}=\pp$.
    \item $\ppk{\fin\times\fin}=\ppk{\fin^\alpha}=\ppk{\ED}=\ppk{\calS}=\ppk{\nwd}=\pp$.
\end{enumerate}
\end{corol}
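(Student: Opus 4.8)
The plan is to read off all three parts of Corollary~\ref{pseudoin-number} from the fact, recorded just above, that $\non{\subselmgsqR{\J}}=\ppsq{\J}$, together with the standard monotonicity of uniformity numbers under implications between covering properties. Concretely, the principle I will use repeatedly is: if every set of reals with property $P$ also has property $Q$, then any set of reals witnessing $\non{Q}$ fails $P$ as well, whence $\non{P}\leq\non{Q}$. I will apply this always in the realm of sets of reals, which are $\varepsilon$-spaces by Theorem~\ref{epsilon-char} and hence fall under the hypotheses of Proposition~\ref{subsequenceimpliespi}.

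First I would establish the common lower bound $\pp\leq\ppsq{\J}$, valid for every ideal $\J$ and every $\square\in\{\oo,\KB,\mathrm{K}\}$. This follows from the chain $\gamma\to\subselmgoo{\J}\to\subselmgkb{\J}\to\subselmgk{\J}$ of Section~\ref{S-def_sets}: since every $\gamma$-set is an $\subselmgsqR{\J}$-space, the monotonicity principle gives $\pp=\non{\gamma}\leq\non{\subselmgsqR{\J}}=\ppsq{\J}$. Hence for each part it remains only to prove the matching upper bound.

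For the upper bounds I would invoke Proposition~\ref{subsequenceimpliespi}. For part~(1), when $\J$ is not $\omega$-hitting, Proposition~\ref{subsequenceimpliespi}(3) shows that every $\subselmgkbR{\J}$-space has property~$\pi$; the monotonicity principle then yields $\ppkb{\J}=\non{\subselmgkbR{\J}}\leq\non{\pi}=\pp$, and together with the lower bound this gives $\ppkb{\J}=\pp$. For part~(2), when $\J\leqk\nwd$ Proposition~\ref{subsequenceimpliespi}(2) gives directly that every $\subselmgR{\J}$-space has property~$\pi$, while when $\J\leqk\fin^\alpha$ Proposition~\ref{subsequenceimpliespi}(1) gives that every $\subselmgR{\J}$-space has property~$\delta$, which combined with Sakai's fact that every $\delta$-set is a $\pi$-set again makes every $\subselmgR{\J}$-space a $\pi$-set; in both cases $\ppk{\J}=\non{\subselmgR{\J}}\leq\non{\pi}=\pp$, so $\ppk{\J}=\pp$.

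Finally, part~(3) is purely an application of part~(2) once the relevant Katětov reductions are read off from Diagram~\ref{diagram:katetovideals}: there one finds $\fin\times\fin=\fin^2\leqk\fin^2$, $\fin^\alpha\leqk\fin^\alpha$, and $\ED\leqk\fin^2$, so each of these ideals is $\leqk\fin^\alpha$ for a suitable $\alpha$, while $\calS\leqk\nwd$ and $\nwd\leqk\nwd$ place the remaining two ideals below $\nwd$. Part~(2) then delivers $\ppk{\J}=\pp$ for each of them. I do not expect a genuine obstacle here: the whole argument is an assembly of results already available, and the only point requiring care is to keep the direction of the inequality correct when passing from an implication of properties to the corresponding comparison of uniformity numbers.
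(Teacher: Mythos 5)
Your proof is correct and follows essentially the same route as the paper: the identity $\non{\subselmgsqR{\J}}=\ppsq{\J}$, the lower bound $\pp=\non{\gamma}\leq\ppsq{\J}$ from $\gamma\to\subselmgsq{\J}$, and the upper bound $\ppsq{\J}\leq\non{\pi}=\pp$ via Proposition~\ref{subsequenceimpliespi} (the paper states the corollary as an immediate consequence of exactly these ingredients). Your handling of the $\varepsilon$-space hypothesis and the Kat\v etov reductions in part~(3) matches what the paper relies on implicitly.
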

\par
Another consequence is related to our Lemma~\ref{nothavingproperties} and $\omega$-hitting families of reals. In fact, the~main essence of the~proof is the~above mentioned equality $\non{\pi}=\pp$ by P.~Simon and B.~Tsaban~\cite{SimTsa}.
\begin{corol}\label{minimalhitting}
$\pp=\min\set{|A|}{A\subseteq\PP(\omega)\text{ has \fup, }A\text{ is }\omega\text{-hitting}}$.
\end{corol}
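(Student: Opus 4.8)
The plan is to establish the two inequalities separately; the deep input in both directions is the equality $\non{\pi}=\pp$ of Simon and Tsaban~\cite{SimTsa}. First I would prove ``$\geq$''. Let $A\subseteq\PP(\omega)$ possess \fup\ and be $\omega$-hitting. By Lemma~\ref{nothavingproperties}(2), $A$ fails to be a~$\pi$-set. Every set of reals is an~$\varepsilon$-space, and $\non{\pi}=\pp$ says that the~least size of a~set of reals without property~$\pi$ equals~$\pp$; consequently $|A|\geq\pp$. Since $A$ was arbitrary, the~minimum in the~statement is at least~$\pp$.

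For ``$\leq$'' it suffices to produce a~single $\omega$-hitting family with \fup\ of cardinality~$\pp$. Such a~family is exactly what Simon and Tsaban construct when verifying $\non{\pi}\leq\pp$: their non-$\pi$-set of size~$\pp$ is, upon inspection, an~$\omega$-hitting family possessing~\fup. This is no accident, because the~proof of Lemma~\ref{nothavingproperties}(2) singles out \fup\ together with $\omega$-hitting as the~canonical combinatorial reason for a~subset of~$\PP(\omega)$ to fail property~$\pi$ (note that there only the~singleton instances $F=\{a\}$ of the~$\pi$-property are used, which is precisely the~negation of $\omega$-hitting). I would therefore quote that construction and merely check that the~resulting family has the~two required properties, which together with the~previous paragraph yields the~claimed equality.

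The~delicate point is precisely this upper bound. Being $\omega$-hitting is a~requirement indexed by continuum-many countable sequences $\seqn{a_n}{n}$ of infinite subsets of~$\omega$, so a~transfinite recursion of length~$\pp$ — the~engine behind the~earlier constructions such as those in Theorems~\ref{subselection-nongamma} and~\ref{subsequence-nonpi}, where one works under~$\CH$ and hence $\pp=\cc$ — cannot address all these demands once $\pp<\cc$. Thus one cannot simply rerun the~constructions of the~previous sections; the~$\omega$-hitting family of size exactly~$\pp$ must instead be extracted from a~family witnessing the~value of~$\pp$, and locating it is exactly the~content of the~Simon--Tsaban argument for $\non{\pi}\leq\pp$. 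I expect to present the~lower bound in full detail and, for the~upper bound, to rely on (and verify the~combinatorial shape of) their witness.
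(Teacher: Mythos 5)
Your lower bound is correct, though it routes through heavier machinery than necessary: the paper gets the inequality $\geq\pp$ in one line without invoking $\non{\pi}=\pp$ at all. If $A$ has \fup\ and $|A|<\pp$, then $A^\ast\cup\fin^\ast$ has the finite intersection property and hence an infinite pseudointersection~$p$, and $p$ witnesses that $A$ is not even \tall, let alone $\omega$-hitting. Your detour through Lemma~\ref{nothavingproperties}(2) and $\non{\pi}\geq\pp$ is valid but unnecessary.

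The upper bound is where you have a genuine gap. You assert that the Simon--Tsaban witness for $\non{\pi}\leq\pp$ ``is, upon inspection, an $\omega$-hitting family possessing~\fup,'' but you neither verify this nor explain why it should be so; Lemma~\ref{nothavingproperties}(2) gives a \emph{sufficient} condition for failing property~$\pi$, not a necessary one, so there is no a priori reason that a non-$\pi$-set of size~$\pp$ has this particular combinatorial shape. The paper avoids the issue entirely with a short general translation that your proposal is missing: take \emph{any} non-$\pi$-set $X\subseteq\PP(\omega)$ of size~$\pp$ together with a clopen $\omega$-cover $\V=\set{V_n}{n\in\omega}$ (enumerated bijectively) witnessing the failure of~$\pi$, and set $a_F=\set{n}{F\not\subseteq V_n}$ for $F\in[X]^{<\omega}$. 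The family $A=\set{a_F}{F\in[X]^{<\omega}}$ has size at most~$\pp$, has \fup\ because $\V$ is an $\omega$-cover, and is $\omega$-hitting precisely because $\V$ witnesses that $X$ is not a $\pi$-set: given infinite sets $c_k\subseteq\omega$, the subfamilies $\V_k=\set{V_n}{n\in c_k}$ cannot satisfy the $\pi$-property, so some finite $F\subseteq X$ has $a_F\cap c_k$ infinite for every~$k$. This is the converse of the translation used in Lemma~\ref{nothavingproperties}(2), it requires no inspection of the internals of~\cite{SimTsa} beyond the bare statement $\non{\pi}=\pp$, and it is the actual content of the corollary's proof. Your concluding remarks about why the transfinite recursions of the earlier sections cannot be rerun when $\pp<\cc$ are sound, but they identify the obstacle without supplying the construction that overcomes it.
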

\begin{proof}
Let us denote the~right-hand side cardinality by~$\kappa$. If $A\subseteq\PP(\omega)$ has \fup, and $|A|<\pp$, then any infinite pseudointersection of $A^\ast=\set{\omega\setminus a}{a\in A}$ is a~witness for $A$ being not \tall, no word about $\omega$-hitting. Thus $\kappa\geq\pp$.
\par
To show the~reversed inequality, let $X\subseteq\PP(\omega)$ be a~non-$\pi$-set of size~$\pp$ with a~clopen $\omega$-cover $\V=\set{V_n}{n\in\omega}$ witnessing this (enumerated bijectively). For each finite set~$F\subseteq X$ we set $a_F=\set{n}{F\not\subseteq V_n}$. Then $A=\set{a_F}{F\in[X]^{<\omega}}$ has \fup\ (because $\V$ is an~$\omega$-cover), and is $\omega$-hitting since $\V$ is a~witness of $X$ being not a~$\pi$-set.
\end{proof}
\par
In contrast to $\subselmgR{\J}$-space whose definition involves all $\omega$-covers, a~similar notion related just to $\I$-$\gamma$-covers for an~ideal~$\I$ is studied as well. Let $\square\in\{\text{\rm 1-1}, \mathrm{KB}, \mathrm{K}\}$. We say that $X$ is an~$\subselggsqR{\I}{\J}$-space if for every $\I$-$\gamma$-cover $\seqn{V_n}{n}$ of~$X$ there is $\square$-function $\varphi\in\baire\omega$ such that $\seqn{V_{\varphi(m)}}{m}$ is a~$\J$-$\gamma$-cover of~$X$, see~\cite{SoSu,Su20}. Any $\subselmgR{\J}$-space is an~$\subselggR{\I}{\J}$-space, the~relation is depicted in Diagram~\ref{diagram:ijsubseq}. Note that any topological space~$X$ is both an~$\subselmgR{\J}$-space and a~$\subselggfR{\J}$-space if only if $X$ is a~$\gamma$-set. 
\begin{figure}[H]
\begin{center}
\begin{tikzpicture}[]
\node (b) at (0.5, -5) {$\gamma$};
\node (c) at (4, -5) {$\subselmg{\J}$};
\node (d) at (0.5, -2) {$\subselggf{\I}$};
\node (e) at (4, -2) {$\subselgg{\I}{\J}$};
\foreach \from/\to in {b/d, b/c, d/e, c/e} \draw [line width=.15cm,
white] (\from) -- (\to);
\foreach \from/\to in {b/d, b/c, d/e, c/e} \draw [->] (\from) -- (\to);
\end{tikzpicture}
\end{center}
\caption{$\subselggR{\I}{\J}$-space.}
\label{diagram:ijsubseq}
\end{figure}
\par
Properties $\subselmgR{\I}$ and $\subselggfR{\I}$ are independent. The~uniformity number of~$\subselggfR{\I}$-space is $\covh{\I}$, see~\cite{SoSu}. Thus if $\covh{\I}>\pp$, then there is trivially an~$\subselggfR{\I}$-space that is not a~$\gamma$-set, and so not an~$\subselmgR{\I}$-space (by the~latter paragraph).  On the~other hand, we show below that the~reversed implication is also not provable.
\begin{lema}\label{gfinnotlessiu}
Let $\G$ be an~ultrafilter, and $\J$ a~Borel ideal. If $\I\not\leqcountk\J$, then for any $\J$-to-one function $f\in\baire\omega$ and countable $B\subseteq\infin$ containing~$\omega$, there is an~infinite set $a\in\G^\ast\cap\I$ such that $f^{-1}[a]\in\J^+$ and $\bi_b[a]\in\I$ for any $b\in B$.
\end{lema}
\begin{proof}
Let $f\in\baire\omega$ be $\J$-to-one, $B\subseteq\infin$, $|B|\leq\omega$. By $\I\not\leqcountk\J$, there is $a'\in\infin$ such that $f^{-1}[a']\in\J^+$ and $\bi_b[a']\in\I$ for any $b\in B$. We claim that there is $a\subseteq a'$ such that $a\in\G^\ast$ and $f^{-1}[a]\in\J^+$. Indeed, otherwise $\G^\ast\restriction a'\leq_f\J$ that is not possible for a~Borel ideal~$\J$ by Lemma~\ref{Gfin-notplus}. It follows directly that $\bi_b[a]\in\I$ for any $b\in B$. Finally, if $\omega\in B$ then $a=\bi_\omega[a]\in\I$.
\end{proof}
\begin{corol}
\begin{enumerate}[(1)]
    \item {\rm($\pp=\cc$)} If $\I,\J$ are \tall, then there is an~$\subselmgooR{\J}$-space $A\subseteq\PP(\omega)$ that is not an~$\subselggfR{\I}$-space.
    \item {\rm($\CH$)} If $\I,\J$ are $\omega$-hitting, then there is an~$\subselmgooR{\J}$-space $A\subseteq\PP(\omega)$ that is neither an~$\subselggfR{\I}$-space nor a~$\pi$-set. 
    \item {\rm($\CH$)} Let $\G$ be an~ultrafilter, and $\J$ a~Borel ideal such that $\I\not\leqcountk\J$. Then there is an~$\subselmgooR{\I}$-space $A\subseteq\PP(\omega)$ that is not an~$\subselggR{(\I\cap\G^\ast)}{\J}$-space.
\end{enumerate}
\end{corol}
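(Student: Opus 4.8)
The plan is to produce all three sets by the double-scale machinery of Sections~\ref{S-nongamma_pytkeev_delta} and~\ref{S-pytkeev_nondelta}, and to detect the failure of the target selection property through one canonical cover. For $A\supseteq\fin$ put $V_n=\set{a\in A}{n\notin a}$; each $V_n$ is a proper clopen subset and $\set{n}{a\notin V_n}=a$ for every $a\in A$. Hence $\seqn{V_n}{n}$ is an~$\I$-$\gamma$-cover of~$A$ exactly when every $a\in A$ lies in~$\I$, and a~$\G$-$\gamma$-cover (i.e.\ $\set{n}{a\notin V_n}\in\G^\ast$ for all~$a$) exactly when every $a\in A$ lies in~$\G^\ast$; here we use $\fin\subseteq\I,\G^\ast$. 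The genuinely new ingredient is the observation that if $A$ is \tall, then $\seqn{V_n}{n}$ admits no~$\gamma$-subsequence: were $\seqn{V_{\varphi(m)}}{m}$ a~$\fin$-$\gamma$-cover, then $\varphi^{-1}(k)=\set{m}{\varphi(m)\in\{k\}}$ would be finite for each~$k$ (as $\{k\}\in\fin\subseteq A$), so $\varphi$ would be finite-to-one with infinite range~$K$, while tallness yields $a\in A$ with $a\cap K$ infinite, forcing $\varphi^{-1}[a]=\set{m}{\varphi(m)\in a}$ to be infinite, contradicting its membership in~$\fin$. Everything else is bookkeeping on top of constructions already carried out.

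For~(1) I would run the construction in the proof of Theorem~\ref{subselection-nongamma} with the auxiliary tall ideal taken to be the given~$\I$, so that $a_\alpha\in\I$ for all~$\alpha$ (each $a_\alpha$ is a subset of a set in~$\I$, and $\I$ is hereditary). The resulting $A=\set{a_\alpha}{\alpha<\cc}\cup\fin$ is a~tall $\subselmgooR{\J}$-space by Lemma~\ref{representativefamilies}; both tallness of~$\I$ and of~$\J$ are used in the per-step Claim exactly as there. Since every $a\in A$ lies in~$\I$, the cover $\seqn{V_n}{n}$ is an~$\I$-$\gamma$-cover, and by the first paragraph it has no~$\gamma$-subsequence, so $A$ is not an~$\subselggfR{\I}$-space.

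For~(2) I would re-run the construction in the proof of Theorem~\ref{subsequence-nonpi} with the two $\omega$-hitting ideals interchanged: the $a_\alpha$ are now chosen in~$\I$ (so every $a\in A$ lies in~$\I$), the traces $\bi_{b_\beta}[a_\alpha]$ are pushed into~$\J$ (so that $\set{a_\xi,b_\xi}{\xi<\cc}$ is a~$\J$-evading double scale and $A$ is an~$\subselmgooR{\J}$-space), and $|a_\alpha\cap c^\alpha_m|=\omega$ is arranged so that $A$ is $\omega$-hitting. The per-step Claim of that proof goes through with the ideals swapped, using $\omega$-hittingness of~$\I$ to meet all target sets and of~$\J$ to thin the interval indices. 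As $A$ is $\omega$-hitting with \fup, it is not a~$\pi$-set by Lemma~\ref{nothavingproperties}(2); being $\omega$-hitting it is \tall, so the canonical $\I$-$\gamma$-cover again witnesses that $A$ is not an~$\subselggfR{\I}$-space.

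For~(3) I would repeat the construction in the proof of Theorem~\ref{subseqn-nond} with the single Borel ideal~$\J$ in place of the family $\set{\J_\alpha}{\alpha<\omega_1}$: fix the ultrafilter~$\G$, enumerate all $\J$-to-one functions as $\set{f_\alpha}{\alpha<\cc}$, and at stage~$\alpha$ apply Lemma~\ref{gfinnotlessi}(2)---available precisely because $\I\not\leqcountk\J$---to choose $a_\alpha\in\G^\ast$ with $f_\alpha^{-1}[a_\alpha]\in\J^+$ and $\bi_{b_\beta}[a_\alpha]\in\I$ for all $\beta\le\alpha$, the $b_\alpha$ coding representative covers via Lemma~\ref{GMlemma}(2). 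Then $A$ is an~$\subselmgooR{\I}$-space by Lemma~\ref{representativefamilies}, and since every $a\in A$ lies in~$\G^\ast$ the cover $\seqn{V_n}{n}$ is a~$\G$-$\gamma$-cover. If $\seqn{V_{\varphi(m)}}{m}$ were a~$\J$-$\gamma$-cover, then $\varphi$ would be $\J$-to-one (because $\varphi^{-1}(k)=\varphi^{-1}[\{k\}]\in\J$ as $\{k\}\in A$), hence $\varphi=f_\alpha$ for some~$\alpha$, and $\varphi^{-1}[a_\alpha]=f_\alpha^{-1}[a_\alpha]\in\J^+$ would contradict the $\J$-$\gamma$-cover property; thus no $\J$-$\gamma$-subsequence exists and $A$ is not a~$\subselggR{\G}{\J}$-space. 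The main obstacle throughout is organizational rather than conceptual: one must check that the double-scale induction (representative covers together with the appropriate per-step Claim, respectively Lemma~\ref{gfinnotlessi}(2)) meets all simultaneous demands under~$\pp=\cc$ (resp.\ \CH), precisely as in the cited theorems, the only new point being the canonical-cover observation of the first paragraph.
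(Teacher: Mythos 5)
Your proposal is correct and takes essentially the same route as the paper: parts (1)--(3) are obtained by re-running the constructions of Theorems~\ref{subselection-nongamma}, \ref{subsequence-nonpi}, and~\ref{subseqn-nond} with the auxiliary ideal (respectively the roles of the two ideals, respectively a single Borel ideal $\J$) fixed in advance, exactly as the paper does. The only difference is cosmetic: where the paper cites an external lemma ([Su20, Lemma~3.2(1)]) to conclude that a tall set contained in $\I$ (resp.\ a set in $\G^\ast$ with $A\not\leqk\J$) fails $\subselggfR{\I}$ (resp.\ $\subselggR{\G}{\J}$), you correctly inline that lemma's proof via the canonical cover $V_n=\set{a\in A}{n\notin a}$.
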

\begin{proof}
(1) A~consequence of a~proof of Theorem~\ref{subselection-nongamma} since \tall\ ideal~$\I$ in its~proof was selected in~advance, and the~constructed set $A$ is \tall\ and contained in~$\I$. By~\cite[Lemma~3.2(1)]{Su20}, no~$A\not\leqk\fin$ contained in~$\I$ is an~$\subselggfR{\I}$-space. 
\par
(2) A~consequence of a~proof of $\text{\rm (b)}\to\text{\rm (a)}$ in Theorem~\ref{subsequence-nonpi}. There is constructed an~$\omega$-hitting set $A\subseteq\J$ that is an~$\subselmgooR{\I}$-space, but not a~$\pi$-set. However, any $\omega$-hitting set is \tall, thus $A\subseteq\J$ is \tall. By~\cite[Lemma~3.2(1)]{Su20}, $A$ is not a~$\subselggfR{\J}$-space. To obtain the~statement of our corollary it is enough to interchange the~role of~$\I$ and~$\J$.
\par
(3) A~consequence of a~proof of Theorem~\ref{subseqn-nonsubseqn}: The~set~$A$ constructed there is contained in~$\I\cap\G^
\ast$ (once using Lemma~\ref{gfinnotlessiu} instead of Lemma~\ref{gfinnotlessi}(2)), and $A\not\leqk\J$. Hence, by~\cite[Lemma~3.2(1)]{Su20}, $A$ is not an~$\subselggR{(\I\cap\G^
\ast)}{\J}$-space.
\end{proof}
\par
Finally, let us point out that $\ppsq{\J}$ is a~uniformity number of $\zsubselmgsqR{\J}$ as well, see~\cite[Proposition~9.4]{Su20} or~\cite[Theorem~3.2]{BorFar} for details. Moreover, a~functional version of an~$\subselggsqR{\I}{\J}$-space is investigated in~\cite{rep,rep2,SoSu,Su20}, including a~cardinal invariant~$\ppsq{\I,\J}$ as the~uniformity number of an~$\subselggsqR{\I}{\J}$-space, and its functional version.


\section{Questions}

We have shown in Corollary~\ref{finalpha-nonfinbeta} that many of the~implications in Diagram~\ref{diagram:katetov} cannot be provably reversed. However, our reasoning was based on the~fact that for most of critical ideals~$\I,\J$ in Diagram~\ref{diagram:katetovideals}, if $\I\not\leqk\J$, then $\I\not\leqk\J\restriction J$ for each $\J$-positive set~$J$. Since the~ideal $\finfin$ is Kat\v etov below $\conv\restriction J$ for many $\conv$-positive sets~$J$, the~following remains open.
\begin{quest}\label{quest-counterexamples}
Is it consistent that there is an~$\subselmgR{(\finfin)}$-set that is not an~$\subselmgR{\conv}$-set?
\end{quest}
Question~\ref{quest-counterexamples} applies also to the~other pairs. We do not know whether it is consistent that the~following holds: $\subselmgR{\fin^\alpha}\not\to\subselmgR{\nwd}$, $\subselmgR{\ED}\not\to\subselmgR{\conv}$. 
\par
A~lot of attention in Sections~\ref{S-pytkeev_nondelta}, \ref{S-gamma_sets} is devoted to  ideals~$\I,\J$ such that $\I\leqcountk\J$ yields $\I\leqrestk\J$. However, we do not know a~general combinatorial description of such pairs.
\begin{quest}\label{katetov-relations}
Which assumption on $\I,\J$ guarantee that $\I\leqcountk\J$ implies $\I\leqrestk\J$ (or even $\I\leqk\J$)?
\end{quest}
Our constructions in Sections~\ref{S-pytkeev_nondelta}, \ref{S-gamma_sets} and~\ref{S-frecheturysohn} needed~\CH\ since they rely on the~relation $\leqcountk$ involving countable families. The~employed transfinite inductions are therefore of the~length~$\omega_1$. We do not know whether this restriction could be weakened.
\begin{quest}
Could the~assumption of~\CH\ in Theorems~\ref{subseqn-nond}, \ref{subseqn-nonsubseqn} and~\ref{zsubseqn-nond} be weakened to {\rm\bf MA} or $\pp=\cc$?
\end{quest}
In the~constructions of our examples we always assume $\pp=\cc$. However, P.~Borodulin-Nadzieja and B.~Farkas~\cite{BorFar} have shown that it is consistent that there is an~ideal~$\J$ with the~uniformity number~$\ppk{\J}$ of an~$\subselmgR{\J}$-space larger than the~pseudointersection number~$\pp$.
\begin{quest}
Is $\ppsq{\J}=\cc$ enough to guarantee the~existence of an~$\subselmgsqR{\J}$-space of cardinality~$\cc$?
\end{quest}
It has been also proven in~\cite{BorFar} that in the~Cohen real model there exists a~meager ideal~$\J$ such that $\ppk{\J}>\pp$. On the~other hand, our results (Corollary~\ref{pseudoin-number}) together with those in~\cite{Su20} show that most of the~critical ideals in Diagram~\ref{diagram:katetovideals} have $\ppk{\J}$ (provably) equal to $\pp$.
\begin{quest}
Is it consistent that $\ppk{\J}>\pp$ for some nicely definable (e.g., Borel, analytic, $\Delta^1_2$, etc.) ideal~$\J$?
\end{quest}
\par
If $\cp{X}$ is an~$\zsubselmgR{\J}$-space, then $X$ is an~$\subselmgR{\J}$-space. The~reversed implication is true for non-\tall\ ideal~$\J$ (being Fr\' echet--Urysohn space and a~$\gamma$-set, respectively, see~\cite{Su20}) as well as in an~$\sonemm$-space (Proposition~\ref{subselplusmm}). 
\begin{quest}\label{q0006}
\begin{enumerate}[(1)]
    \item Is there an~$\subselmgR{\J}$-space~$X$ such that $\cp{X}$ is not an~$\zsubselmgR{\J}$-space?
    \item What if $\J$ is not $\omega$-hitting (and hence $\subselmgR{\J}$-space implies~$\pi$)?
    \item Is it consistent that a~dominating subset of~$\baire\omega$ is a~$\pi$-set?
\end{enumerate}
\end{quest}
\par
The next question is related to the last item of Question~\ref{q0006}.
\begin{quest} \label{q0007}
Are $\pi$-sets of reals preserved by adding a Cohen real?
\end{quest}
Since $\gamma$-sets are preserved by adding Cohen reals by \cite[Theorem~5.1]{MilTsaZdo16}, a  negative answer to Question~\ref{q0007} would 
give a $\pi$-set of reals that is not a $\gamma$-set, with an alternative (forcing-theoretic) proof. On the other hand, an affirmative answer would imply much more than a negative answer to Question~\ref{q0006}(3): After adding uncountably many Cohen  reals each ground model set $X$ of reals is easily seen to become Rothberger, and thus so do  all its finite powers, being also ground model sets of reals. Combining this
with \cite[Theorem~11]{TsZd09} we conclude that all finite powers of 
$X$ are also Hurewicz in the Cohen extension. Now, 
\cite[Theorem~5.2]{MilTsaZdo16} implies that all finite powers of $X$ must have been both Rothberger and Hurewicz already in the~ground model.

\smallskip
Let us recall that a~topological space~$X$ has property $\delta$ if for every  open $\omega$-cover $\V$ of~$X$ there is $\alpha<\omega_1$ such that $X\in\rmL^\alpha(\V)$. On the~other hand, $X$ has property~$\delta'$ if there is $\alpha<\omega_1$ such that for any open $\omega$-cover~$\V$ we have $X\in\rmL^\alpha(\V)$. Thus any $\delta'$-set is a~$\delta$-set as well. However, we do not know whether these two notions are provably equivalent.
\begin{quest}
Is any $\delta$-set also a~$\delta'$-set?
\end{quest}


\vspace{0.5cm}

{\bf Acknowledgement.}
We are deeply committed to the~referee for his proposal for a~revision of the paper, which we believe improved substantially the~presentation. We would like to thank the~participants of Ko\v sice set-theoretical seminar for valuable comments and suggestions.


\vspace{0.5cm}

\end{document}